\documentclass[12pt,a4paper]{amsart}
\usepackage{url}
\usepackage{amsmath,amsthm,amsfonts,amssymb,latexsym}

\setlength{\textheight}{24truecm}

\textheight=23cm
 \textwidth=13.5cm
 \hoffset=-1cm
 \parindent=16pt

\newtheorem{theorem}{Theorem}[section]
\newtheorem{proposition}[theorem]{Proposition}
\newtheorem{lemma}[theorem]{Lemma}
\newtheorem{claim}[theorem]{Claim}

\newtheorem{corollary}[theorem]{Corollary}
\newtheorem{observation}[theorem]{Observation}
\theoremstyle{definition}

\newtheorem{question}[theorem]{Question}

\newcommand{\U}{\mathcal U}
\newcommand{\w}{\omega}

\newcommand{\IQ}{\mathbb Q}

\newcommand{\IP}{\mathbb P}

\newcommand{\K}{\mathcal{K}}

\newcommand{\G}{\mathcal{G}}
\newcommand{\I}{\mathcal{I}}

\newcommand{\F}{\mathcal{F}}

\newcommand{\V}{\mathcal{V}}

\newcommand{\uhr}{\upharpoonright}

\newcommand{\name}[1]{\dot{#1}}
\newcommand{\la}{\langle}
\newcommand{\ra}{\rangle}

\newcommand{\forces}{\Vdash}
\newcommand{\supp}{\mathrm{supp}}

\newcommand{\nothing}[1]{}

\title{Menger remainders of topological groups}

\author{Angelo Bella, Se{\c{c}}$\dot{\mathrm{\i}}$l Tokg\"oz,  and Lyubomyr
Zdomskyy}

\address{Department of Mathematics and Computer Science,
University of Catania,
 Citt\'a universitaria, viale A. Doria 6, 95125
Catania, Italy.}
 \email{bella@dmi.unict.it}

\address{Hacettepe University,
Faculty of Science, Department of Mathematics, 06800
Beytepe--Ankara, Turkey.} \email{secil@hacettepe.edu.tr}

\address{Kurt G\"odel Research Center for Mathematical Logic,
University of Vienna, W\"ahringer Stra\ss e 25, A-1090 Wien,
Austria.} \email{lzdomsky@gmail.com}
\urladdr{http://www.logic.univie.ac.at/\~{}lzdomsky/}

\thanks{ The first author's research that led to the present paper was partially
supported by a grant of the group GNSAGA of INdAM.
The second author would like to thank Hacettepe University
 BAP project 014 G 602 002 for its support.
The third author would like to thank the Austrian Academy
of Sciences (APART Program) as well as the Austrian Science Fund FWF
(Grant I 1209-N25) for generous support for this research. }

\subjclass[2010]{Primary:   03E75, 54D40, 54D20.
 Secondary:  03E35, 54D30,  54D80.}
\keywords{Remainder, topological group,   
Menger space, Hurewicz space, Scheepers space, ultrafilter,
forcing.}

\begin{document}
\begin{abstract}
In this paper we discuss what kind of constrains combinatorial
covering properties of Menger, Scheepers, and Hurewicz impose on
remainders of topological groups. For instance, we show that such
a
remainder is Hurewicz if and only it is $\sigma$-compact.
Also,
the
existence of a Scheepers non-$\sigma$-compact remainder of a
topological group  follows from CH and yields a $P$-point, and
hence
is independent of ZFC. We also make an attempt to prove a
dichotomy
for the Menger property of remainders of topological groups in
the
style of Arhangel'skii.
\end{abstract}
\maketitle

\section{Introduction}
All topological spaces  are assumed to be completely regular. All
undefined topological notions can be found in \cite{Eng}. For a
space $X$ and its compactification $bX$ the complement
$bX\setminus
X$ is called a \emph{remainder} of $X$.
 The
interplay between the properties of spaces and their remainders
has
been studied since more than 50 years and resulted in a number of
duality results describing properties of $X$ in terms of those of
their remainders. A typical example of such a duality is the
celebrated result of Henriksen and Isbell stating that a
topological
space $X$ is Lindel\"of if and only if all (equivalently any) of
its
remainders is of \emph{countable type}, that is, any compact
subspace can be enlarged to another compact subspace with
countable
outer base.

In the last years, remainders in compactifications of topological
groups have been a popular topic. This is basically due to the
fact
that topological groups are much more sensitive to the properties
of
their remainders than topological spaces in general.  A major
role
in this study  was played  by Arhangel$'$skii, who initiated a
systematic study of this topic.  Among many other things, he
obtained two  elegant results, which are dichotomies for non-locally compact
topological groups.

\begin{theorem}[\cite{Arh09}] \label{ar1}  Let $G$ be a
topological group. If  
$bG$ is a compactification of $G$,  then $bG\setminus G$ is
either Lindel\"of or pseudocompact.
\end{theorem}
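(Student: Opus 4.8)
The plan is to chain a few reductions until the problem is controlled by classical theory. If $G$ is locally compact then $G$ is open in the Hausdorff space $bG$, so $Y:=bG\setminus G$ is compact, hence both Lindel\"of and pseudocompact, and there is nothing to prove; so assume from now on that $G$ is not locally compact. The dichotomy follows once we establish the implication: \emph{if $Y$ is not pseudocompact, then $Y$ is Lindel\"of}, so suppose $Y$ is not pseudocompact.

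First I would squeeze a small compact subset out of $G$. Since $Y$ is Tychonoff and not pseudocompact it is not feebly compact, so it carries an infinite family $\{V_n:n\in\omega\}$ of nonempty open subsets which is discrete in $Y$; pick $y_n\in V_n$. Then $S:=\{y_n:n\in\omega\}$ is closed and discrete in $Y$, and one checks (using that the $y_n$ cannot accumulate at a point of $Y$, since $Y$-neighbourhoods are traces of $bG$-neighbourhoods) that the set $\Phi$ of cluster points of the sequence $(y_n)$ in $bG$ --- a nonempty compact set --- is disjoint from $Y$, so $\Phi\subseteq G$. The key point I would then try to push through is that, with a careful choice of the $V_n$, one can arrange $\Phi$ (or a suitable nonempty compact subset of it) to be a $G_\delta$ of $bG$, equivalently to have countable character in $bG$, and hence in $G$. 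I expect this $G_\delta$-extraction to be one of the two genuinely hard points: for cluster sets in arbitrary compacta it fails (e.g.\ in $\beta\omega$), so one must exploit that $G$ is dense in the normal space $bG$. Once $\Phi$ is a compact $G_\delta$ of $bG$, normality gives a continuous $f:bG\to[0,1]$ with $f^{-1}(0)=\Phi$.

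Now the group structure enters. Since $G$ is homogeneous, a left translation maps $\Phi$ onto a compact subset of countable character in $G$ through any prescribed point, so $G$ is of pointwise countable type; by the structure theory of topological groups this yields a compact subgroup $N\le G$ with $H:=G/N$ metrizable. The quotient map $q:G\to H$ is perfect (its fibres are the cosets of the compact group $N$, and it is closed), so $G$, and every closed subspace of $G$, is a paracompact $p$-space.

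It remains to prove the dichotomy when $G$ is feathered, i.e.\ a paracompact $p$-space group. My primary route would be to transfer to the metrizable case: using perfectness of $q$, build a compactification $bH$ of $H$ and a perfect surjection $\hat q:bG\to bH$ extending $q$ with $\hat q^{-1}(H)=G$, so that $\hat q$ restricts to a perfect surjection of $Y$ onto $bH\setminus H$; since perfect maps both preserve and reflect the Lindel\"of property and pseudocompactness, the dichotomy for $Y$ reduces to the same statement for the remainder of the metrizable group $H$. (An alternative, more explicit route uses $f$ from the second paragraph: choosing $t_n\downarrow0$ suitably one obtains $Y=\bigcup_n Y_n$ with $Y_n:=Y\cap f^{-1}([t_n,1])$ the remainder of the closed subspace $G_n:=G\cap f^{-1}([t_n,1])$ of $G$ in the compactification $f^{-1}([t_n,1])$; as a countable union of Lindel\"of subspaces is Lindel\"of, it suffices to see each $Y_n$ is Lindel\"of, using that $G_n$ is a paracompact $p$-space lying a fixed distance away from $\Phi$.) The residual content --- the case of metrizable groups in the transfer route (or the analysis of the pieces $G_n$ in the other), together with the $G_\delta$-extraction of the second paragraph --- is where essentially all the work lies; the remaining steps are routine manipulation of perfect maps, homogeneity, and the structure theory of feathered groups.
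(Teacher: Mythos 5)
First, a caveat: the paper does not prove this statement at all --- it is quoted from Arhangel'skii --- so your proposal has to be measured against his published argument rather than against anything in this text. Your skeleton is the right one (dispose of the locally compact case; show that a non-pseudocompact remainder forces $G$ to be feathered, hence of countable type; conclude that the remainder is Lindel\"of), and the extraction of the nonempty compact cluster set $\Phi\subseteq G$ from a discrete family in $Y$ is correct. Also, your closing reduction is correct but far easier than you believe: once $G$ is a paracompact $p$-space it is of countable type, and the Henriksen--Isbell duality quoted in the paper's introduction immediately gives that every remainder of a space of countable type is Lindel\"of; there is no residual ``metrizable case'' to work out.

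The genuine gap is exactly the step you flag yourself: upgrading $\Phi$ to a compact set of countable character in $bG$. You propose to do this purely topologically, ``with a careful choice of the $V_n$'', using only that $G$ and $Y$ are dense in the normal space $bG$. That cannot work. Take $bX=\omega^*$, let $\{A_n\}$ be pairwise disjoint infinite subsets of $\omega$, let $F$ be the set of cluster points of the clopen family $\{A_n^*\}$ (a nonempty closed nowhere dense set), split the dense open set $\omega^*\setminus F$ into two disjoint dense pieces $X_0$ and $Y$, and put $X=X_0\cup F$. Then $X$ is dense and nowhere locally compact, $Y=\omega^*\setminus X$ is dense and not pseudocompact (the traces $A_n^*\cap Y$ form an infinite discrete family of nonempty open sets), yet $X$ contains \emph{no} nonempty compact set of countable character in $\omega^*$: such a set would be a nonempty $G_\delta$ of $\omega^*$, hence would have nonempty interior, contradicting the density of $Y$. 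So density and normality --- both present here --- are insufficient, and no choice of the $V_n$ can help; the group operation must be used already in the extraction step, not only afterwards via homogeneity and the structure theory of feathered groups, which is where your proposal first invokes it. This is precisely where Arhangel'skii's proof uses the group: from the discrete family one obtains a countable family of nonempty open sets $T_n\subseteq G$ with $\Phi\subseteq\overline{T_n}$ such that every neighbourhood of $\Phi$ contains some $T_n$ (an outer $\pi$-base of $\Phi$), and it is a group-theoretic fact --- the compact-set analogue of the equality of $\pi$-character and character in topological groups, obtained by translating the $T_n$ --- that such a family can be converted into a countable outer base. Without that ingredient the central implication of your proof remains unproven, and the route you sketch for it is a dead end.
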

\begin{theorem}[\cite{Arh08}] \label{ar2} Let $G$ be a
topological group. If  $bG$ is a
compactification of $G$,  then $bG\setminus G$ is 
either
$\sigma$-compact or Baire.
\end{theorem}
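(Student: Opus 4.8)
The plan is to prove the following sharper statement, from which the theorem is immediate: if $G$ is locally compact then $bG\setminus G$ is compact, and if $G$ is not locally compact then $bG\setminus G$ is $\sigma$-compact precisely when $G$ is \v{C}ech-complete, and is Baire otherwise.

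\emph{Reductions.} If $G$ is locally compact, then, being locally compact and dense in the Hausdorff space $bG$, it is open in $bG$; hence $Y:=bG\setminus G$ is closed in the compact space $bG$, so it is compact and in particular $\sigma$-compact. Assume now that $G$ is not locally compact. By homogeneity of topological groups, $G$ then has a compact neighbourhood at no point, so $\operatorname{int}_{bG}G=\emptyset$, i.e. $Y$ is dense in $bG$. Two consequences of this density: first, every compact subset of $Y$ has empty interior in $bG$ (its interior would be a nonempty open set missing the dense set $G$) and hence is nowhere dense in $Y$; in particular a $\sigma$-compact $Y$ would be meager in itself, so in the present case the two alternatives are genuinely mutually exclusive. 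Second, by the classical dictionary between a space and its remainders, $Y$ is $\sigma$-compact — equivalently, $F_\sigma$ in the compact space $bG$ — iff $G$ is $G_\delta$ in $bG$, i.e. \v{C}ech-complete. So it remains to prove: if $Y$ is not Baire, then $G$ is \v{C}ech-complete.

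\emph{Extracting a \v{C}ech-complete piece of $G$.} Suppose $Y$ is not Baire, and fix a nonempty open $W\subseteq bG$ such that $W\cap Y$ is of the first category in itself, say $W\cap Y=\bigcup_{n\in\omega}F_n$ with each $F_n$ closed and nowhere dense in $W\cap Y$. Since $Y$ is dense in $bG$, $W\cap Y$ is dense in $W$; using this one checks that each $\operatorname{cl}_W F_n$ is nowhere dense in $W$ and that $W\cap Y\subseteq\bigcup_n\operatorname{cl}_W F_n$. Hence $P:=\bigcap_{n\in\omega}\bigl(W\setminus\operatorname{cl}_W F_n\bigr)$ is a $G_\delta$-subset of $bG$ (each $W\setminus\operatorname{cl}_W F_n$ is open in $W$, hence in $bG$), it is dense in $W$ because $W$ is a Baire space, and $P\subseteq W\setminus(W\cap Y)=W\cap G$. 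Thus $G$ contains a nonempty subspace $P$ which, being $G_\delta$ in the compact space $bG$, is \v{C}ech-complete, and which is dense in the nonempty open subset $W\cap G$ of $G$. By homogeneity every point of $G$ has an open neighbourhood homeomorphic to $W\cap G$, and $W\cap G$, carrying the dense \v{C}ech-complete (hence Baire) subspace $P$, is a Baire space; so $G$ is a Baire space.

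\emph{The crux, and the main obstacle.} It remains to deduce from ``the topological group $G$ contains a dense \v{C}ech-complete subspace'' that $G$ is itself \v{C}ech-complete; then $Y$ is $\sigma$-compact and we are done. This implication is \emph{false} for general topological spaces, and proving it is where I expect the real work to lie — it is the one point at which homogeneity is used essentially. The mechanism I would employ: after a left translation, assume $P$ is dense in an open neighbourhood $V$ of the identity $e$; the translates $gV$ cover $G$, each carries the dense \v{C}ech-complete copy $gP$, and Pettis' theorem (available since $G$ is Baire) ensures that nearby translates of $V$, hence of $P$, genuinely overlap. One then assembles these overlapping \v{C}ech-complete pieces into a completeness sequence of open covers of $G$, recentering at $e$ by a translation at each stage so that the construction is governed by the single ``blueprint'' $P$; this sequence witnesses \v{C}ech-completeness of $G$. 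Equivalently, one may invoke the known fact that a topological group admitting a dense \v{C}ech-complete subspace is \v{C}ech-complete.
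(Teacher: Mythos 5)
The paper does not actually prove Theorem~\ref{ar2}: it is imported from \cite{Arh08}, so there is no internal proof to compare yours against. Judged on its own, your argument is essentially the standard one and is sound up to the point you yourself flag. The reductions are correct: the locally compact case, the equivalence ``$bG\setminus G$ is $\sigma$-compact iff $G$ is \v{C}ech-complete,'' the mutual-exclusivity remark, and the extraction of the nonempty dense $G_\delta$-subset $P\subseteq W\cap G$ of $bG$ from a witness $W\cap Y$ of non-Baireness are all fine. Two points remain. First, your concluding appeal to ``the known fact that a topological group admitting a dense \v{C}ech-complete subspace is \v{C}ech-complete'' is legitimate --- this is precisely \cite[Theorem~1.2]{Arh09}, which the present paper itself invokes in the proof of Theorem~\ref{main_hur} --- but your own sketch of how one would prove it (Pettis' theorem plus an assembled completeness sequence of covers) is far too vague to stand as a proof, so your argument genuinely rests on that citation rather than on the sketch. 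Second, what you actually produce is a dense \v{C}ech-complete subspace of the open set $W\cap G$, not of $G$; to apply the quoted fact you still need the (easy, but unstated) local-to-global step: take a maximal pairwise disjoint family of translates of $W\cap G$; the corresponding translates of $P$ are relatively open in their union, so that union is a topological sum of \v{C}ech-complete spaces, hence \v{C}ech-complete, and it is dense in $G$ by maximality. With these two points made explicit, your proof is complete and, as far as one can tell, follows the same route as Arhangel'skii's original argument.
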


We  recall that a  topological space $X$ is \emph{Baire}  if the
intersection of countably many open dense subsets is dense.

In this paper we will focus our attention  on  topological
properties which are strictly in between $\sigma$-compact and
Lindel\"of. Recall from \cite{COC1} that a space $X$ is
\emph{Menger} (or has the \emph{Menger property}) if  for any
sequence $(\U_n)_{n\in\w}$ of open covers of $X$ one may pick
finite
sets $\mathcal V_n\subset \mathcal U_n$ in such a way that
$\{\bigcup\mathcal V_n:n\in\omega\}$ is a cover of $X$. A family
$\{W_n:n\in\w\}$ of subsets of $X$ is called an $\w$-cover (resp.
$\gamma$-cover) of $X$, if for every $F\in [X]^{<\w}$ the  set
$\{n\in\w:F\subset W_n\}$ is infinite (resp. co-finite). The
properties of \emph{Scheepers} and \emph{Hurewicz} are defined in
the same way as the Menger property, the only difference being
that
we additionally demand that $\{\bigcup\mathcal V_n:n\in\omega\}$
is
a $\w$-cover (resp. $\gamma$-cover) of $X$. It is immediate that
$$\sigma\mbox{-compact} \Rightarrow \mbox{Hurewicz}\Rightarrow
\mbox{Scheepers} \Rightarrow \mbox{Menger} \Rightarrow
\mbox{Lindel\"of}.$$ The properties mentioned above have recently
received great attention, mainly because of their combinatorial
nature and  game-theoretic characterizations. One of the most
striking results about the Menger property is due to Aurichi who proved
\cite{Aur10} that  any Menger space is a $D$-space.

Our initial idea was to find counterparts of the properties
of
Menger, Scheepers, and Hurewicz in the style of
Theorems~\ref{ar1}
and \ref{ar2}. It turned out that the counterpart of the Hurewicz
property is already given by Theorem~\ref{ar1} because of the
following result, see Section~\ref{hurewicz} for its proof.

\begin{theorem} \label{main_hur}
Let $G$ be a topological group. If $\beta G\setminus G$ is
Hurewicz,
then it is $\sigma$-compact.
\end{theorem}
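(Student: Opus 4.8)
The plan is first to dispose of the easy case and then to reduce to a concrete situation in which the Hurewicz property can be exploited directly. If $G$ is locally compact, then $G$ is open in $\beta G$, so $\beta G\setminus G$ is closed in the compact space $\beta G$, hence compact and in particular $\sigma$-compact. So assume from now on that $G$ is not locally compact. Since topological groups are homogeneous, $G$ is then nowhere locally compact, so $Y:=\beta G\setminus G$ is dense in $\beta G$. Suppose, towards a contradiction, that $Y$ is Hurewicz but not $\sigma$-compact. Being Hurewicz, $Y$ is Lindel\"of; being non-$\sigma$-compact, it is Baire by Theorem~\ref{ar2}. Using the Lindel\"ofness of $Y$ together with standard facts about remainders of topological groups (Henriksen--Isbell duality and the structure theory of feathered groups, splitting off a compact subgroup $K$ with $G/K$ metrizable and pushing the remainder down along the perfect quotient map), one reduces to the case in which $G$ is separable metrizable. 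Then $G$ is a dense subgroup of its Raikov completion $P$, a Polish group, and "$Y$ not $\sigma$-compact" means exactly "$G$ is not a $G_\delta$ in $\beta G$'', i.e.\ "$G$ not completely metrizable'', i.e.\ $G\subsetneq P$. Fix a metrizable compactification $Z$ of $P$ in which $P$ is a $G_\delta$ (embed $P$ in the Hilbert cube); then $Z$ is a compactification of $G$ as well, the canonical map $\beta G\to Z$ sends $Y$ onto $Z\setminus G$, and hence $Z\setminus G$ is Hurewicz.

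Next I would squeeze $Z\setminus G$ between itself and a $\sigma$-compact subset of $Z$ that avoids a prescribed countable dense set. Let $A=\{a_n:n\in\omega\}$ be a countable dense subset of $G$; then $A$ is dense in $Z$ and each $a_n$ is a $G_\delta$-point of the compact metrizable space $Z$. For each $n$ put $\mathcal U_n=\{V\cap(Z\setminus G):V$ open in $Z,\ a_n\notin\overline V\}$; since $a_n\in G$ and $Z$ is Hausdorff, $\mathcal U_n$ is an open cover of $Z\setminus G$. By the Hurewicz property choose finite $\mathcal V_n\subseteq\mathcal U_n$ so that $\{\bigcup\mathcal V_n:n\in\omega\}$ is a $\gamma$-cover of $Z\setminus G$, and let $D_n$ be the compact union of the closures in $Z$ of the members of $\mathcal V_n$, so $a_n\notin D_n$. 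With $E_m:=\bigcap_{n\ge m}D_n$ one gets compact sets with $E_m\cap A\subseteq\{a_0,\dots,a_{m-1}\}$, while the $\gamma$-cover condition gives $Z\setminus G\subseteq\bigcup_m E_m$. Since $a_0,\dots,a_{m-1}$ are $G_\delta$-points of $Z$, each $E_m\setminus A$ is $\sigma$-compact, hence so is $F:=\bigcup_m(E_m\setminus A)$; moreover $Z\setminus G\subseteq F$ (because $A\subseteq G$) and $F\cap A=\varnothing$.

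Finally I would extract the contradiction from the $0$--$1$ law for subgroups. Put $B:=Z\setminus F$. Then $B$ is a $G_\delta$ in $Z$, $B\subseteq G$ (as $Z\setminus G\subseteq F$), and $A\subseteq B$ (as $F\cap A=\varnothing$); since $A$ is dense in $Z$, $B$ is a dense $G_\delta$ of $Z$, hence a dense $G_\delta$ of $P$, hence comeager in the Polish (in particular Baire) group $P$. Therefore $G\supseteq B$ is comeager in $P$, so $G$ is non-meager and has the Baire property. By Pettis's theorem a subgroup of a topological group with the Baire property is either meager or open, so $G$ is open in $P$, hence closed, hence equal to $\overline G=P$ --- contradicting $G\subsetneq P$. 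This contradiction shows that $Y$ was $\sigma$-compact.

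I expect the main obstacle to be the reduction to the separable metrizable case: the genuinely new content (compared with the analogous question for arbitrary Baire Hurewicz spaces, where the statement fails --- under CH a Luzin set is Baire, Hurewicz, and not $\sigma$-compact) sits in combining the group structure of $G$ (used in the final step) with the Hurewicz --- not merely Menger --- covering property (used to get the $\gamma$-cover, hence the nested compacta $E_m$ capturing $Z\setminus G$). One must also be careful that $\sigma$-compactness survives the passage from $E_m$ to $E_m\setminus A$, which is precisely why the argument has to be run inside a metrizable compactification, where singletons are $G_\delta$.
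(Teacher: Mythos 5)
Your argument for the case of a \emph{separable metrizable} group $G$ is correct, and it is a pleasant, essentially self-contained variant of what the paper does: your $\gamma$-cover construction of the compacta $E_m$ and the $\sigma$-compact set $F$ avoiding the countable dense set $A$ is a direct proof, in this special case, of the theorem (cited in the paper as \cite[Theorem~27]{BanZdo06}) that a Hurewicz subspace of a \v{C}ech-complete space is contained in a $\sigma$-compact one; and your endgame via Pettis's theorem in the Polish completion $P$ replaces the paper's appeal to Arhangel'skii's theorem that a topological group with a dense \v{C}ech-complete subspace is \v{C}ech-complete (\cite[Theorem~1.2]{Arh09}). Both endgames work; yours is more elementary but is tied to $G$ living inside a Polish \emph{group}.

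The genuine gap is the reduction to the separable metrizable case, which you correctly flag as the main obstacle but then dispose of with ``standard facts.'' The facts you invoke do not deliver it. Henriksen--Isbell gives that $G$ is of countable type, hence feathered, hence has a compact subgroup $K$ with $G/K$ metrizable and the quotient map perfect --- but nothing here makes $G/K$ \emph{separable}; a Lindel\"of (even Hurewicz) remainder does not by itself bound the weight or density of $G$. Supplying separability is exactly the role of the paper's Lemma~\ref{feathered}, which reduces \v{C}ech-completeness of $G$ to that of the closed subgroups $\overline{\la QK\ra}$ for countable $Q$ (these are the ones that are ``separable modulo $K$''), and whose proof is a nontrivial adaptation of the Arhangel'skii--Tkachenko completeness criterion \cite[Theorem~4.3.15]{ArkTka11}, resting on the fact that completeness of the induced metric on $G/K$ is detected on closed separable subspaces. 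A second, related defect: even after such a reduction, the separable metrizable object you are handed is the coset space $G_0/K$, which is \emph{not} a topological group when $K$ is not normal, so your Pettis step cannot be run there. The fix is to run your $\gamma$-cover argument in a metrizable compactification of $G_0/K$, pull the resulting dense Polish set back along the perfect quotient to get a dense \v{C}ech-complete subspace of $G_0$, and then conclude with \cite[Theorem~1.2]{Arh09} rather than Pettis --- which is precisely the paper's route. So your proof is salvageable, but the missing reduction is the actual technical content of Section~\ref{hurewicz}, not a citation.
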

Let us note that there are ZFC examples of Hurewicz sets of reals
which are not $\sigma$-compact (see \cite[Theorem~5.1]{COC2} or
\cite[Theorem~2.12]{Tsa11}),
and thus Theorem~\ref{main_hur} is specific for remainders of
topological groups.

As it follows from the theorems below, which are the main results
of
this paper, for the properties of Scheepers and Menger
 the situation depends on the ambient set-theoretic
universe.
Each subspace of $\mathcal P(\w)$ (e.g., an ultrafilter)
is considered with the subspace topology. Let us recall from
\cite[Theorem~3.9]{COC2} that
if all finite powers of a topological space $X$ are Menger then
$X$ is Scheepers.
The converse of this statement fails consistently: under CH there
exists a Hurewicz subspace of
$\mathcal P(\w)$ whose square is not Menger, see
\cite[Theorem~3.7]{COC2}.

\begin{theorem} \label{main_sch}
There exists a Scheepers ultrafilter  iff there exists a
topological
group  $G$ such that $\beta G\setminus G$ is Scheepers and not
$\sigma$-compact iff there exists a
topological
group  $G$ such that all finite powers of $\beta G\setminus G$
are Menger   and not
$\sigma$-compact.
\end{theorem}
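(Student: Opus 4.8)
The plan is to prove the cycle $\mathrm{(iii)}\Rightarrow\mathrm{(ii)}\Rightarrow\mathrm{(i)}\Rightarrow\mathrm{(iii)}$, where $\mathrm{(i)}$ is ``there is a Scheepers ultrafilter'', $\mathrm{(ii)}$ is ``some topological group $G$ has $\beta G\setminus G$ Scheepers and not $\sigma$-compact'', and $\mathrm{(iii)}$ is ``some topological group $G$ has all finite powers of $\beta G\setminus G$ Menger, with $\beta G\setminus G$ not $\sigma$-compact''. The implication $\mathrm{(iii)}\Rightarrow\mathrm{(ii)}$ is immediate with the same group, since by \cite[Theorem~3.9]{COC2} a space all of whose finite powers are Menger is Scheepers. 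In the other two implications I will use that, for a topological group $G$, each of the properties ``Scheepers'', ``all finite powers Menger'' and ``$\sigma$-compact'' holds for $\beta G\setminus G$ if and only if it holds for some (equivalently every) remainder $bG\setminus G$: the canonical map between two compactifications of $G$ restricts to a perfect surjection of the corresponding remainders, its finite powers are again perfect surjections, and all three properties pass through perfect surjections and perfect preimages. Hence it suffices to verify them for one convenient compactification.

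For $\mathrm{(i)}\Rightarrow\mathrm{(iii)}$ I would argue as follows. Let $\mathcal U$ be a Scheepers ultrafilter; recall (from an earlier lemma) that every Menger ultrafilter has all of its finite powers Menger, so, as $\mathcal U$ is Scheepers and hence Menger, all finite powers of $\mathcal U$, viewed in $\mathcal P(\w)\cong(2^\w,\triangle)$, are Menger. The dual ideal $G:=\mathcal U^-=\{A\subseteq\w:\w\setminus A\in\mathcal U\}=\mathcal P(\w)\setminus\mathcal U$ is a subgroup of $(2^\w,\triangle)$: it contains the dense set of finite subsets of $\w$, and $A\triangle B\subseteq A\cup B$ shows it is closed under $\triangle$. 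Thus $G$ is a dense subgroup of the compact group $2^\w$, which is therefore a compactification of $G$ with remainder $2^\w\setminus G=\mathcal U$. Consequently $\beta G\setminus G\cong\mathcal U$ has all finite powers Menger; and it is not $\sigma$-compact, because a $\sigma$-compact subset of $2^\w$ is $F_\sigma$ and so has the Baire property, whereas a free ultrafilter does not --- it is a tail subset of $2^\w$ carried onto its own complement by the complementation homeomorphism, hence neither meager nor comeager.

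The remaining implication $\mathrm{(ii)}\Rightarrow\mathrm{(i)}$ is where the real work lies, and it is the step I expect to be the main obstacle. Assume $\beta G\setminus G$ is Scheepers and not $\sigma$-compact. Then it is Menger, hence Lindel\"of, and $G$ is not locally compact (otherwise $G$ would be open in $\beta G$ and $\beta G\setminus G$ compact). By Arhangel'skii's theory of topological groups with Lindel\"of remainders, together with the invariance facts above, $G$ is separable, metrizable and precompact, so it embeds as a dense subgroup of a compact metrizable group $K$, and $R:=K\setminus G$ is Scheepers and not $\sigma$-compact (a dense $G_\delta$ subgroup of the Polish group $K$ would be comeager, hence equal to $K$, so the proper dense subgroup $G$ is not $G_\delta$ and $R$ is not $F_\sigma$). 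It then remains to manufacture a Scheepers ultrafilter from the pair $(K,G)$: when $K$ is zero-dimensional one slices inside the Cantor-group structure, locating a closed subgroup $L\le K$ homeomorphic to $2^\w$ for which $L\cap G$ has index two in $L$ and the coset $L\setminus G$ is, as a subspace of $2^\w$, homeomorphic to a free ultrafilter $\mathcal V$; since $L\setminus(L\cap G)$ is closed in $R$ and the Scheepers property is hereditary for closed subspaces, $\mathcal V$ is Scheepers. The general case is to be reduced to the zero-dimensional one by passing to a suitable quotient of $K$ along which the Scheepers property of $R$ survives, invoking the Baire dichotomy of Theorem~\ref{ar2}.

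The delicacy of $\mathrm{(ii)}\Rightarrow\mathrm{(i)}$ is essential: Scheepers non-$\sigma$-compact sets of reals exist outright in ZFC, so the covering property alone cannot produce an ultrafilter, and the argument must exploit the extra rigidity carried by a group remainder --- namely the group structure of the complement $G$ inside a metrizable compactification. The two points requiring care are identifying precisely which closed slice of $R$ is homeomorphic to an ultrafilter, and handling compact groups $K$ that are not zero-dimensional; these are the places where I expect the bulk of the technical effort to go.
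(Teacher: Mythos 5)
Your implications $\mathrm{(iii)}\Rightarrow\mathrm{(ii)}$ and $\mathrm{(i)}\Rightarrow\mathrm{(iii)}$ are correct and essentially identical to the paper's: the dual ideal of the ultrafilter is a dense subgroup of $(\mathcal P(\w),\Delta)$ whose remainder in $\mathcal P(\w)$ is the ultrafilter itself, and the transfer between compactifications via perfect maps is exactly how the paper justifies passing to $\beta G$. The problem is $\mathrm{(ii)}\Rightarrow\mathrm{(i)}$, which you correctly identify as the crux but for which your sketch both rests on a false reduction and omits the two ideas that actually make the argument work. First, a group with Lindel\"of non-$\sigma$-compact remainder is of countable type, hence feathered, but it need not be precompact or metrizable, so there is in general no compact metrizable group $K$ containing $G$ densely. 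The paper instead embeds $G$ densely into its Ra\u{\i}kov completion $\tilde{G}$ (\v{C}ech-complete, not compact), picks $g\in\tilde{G}\setminus G$, and exploits that $G$ and $gG$ are \emph{two disjoint dense} subsets of $\beta\tilde{G}$ whose complements both carry the covering property and together cover $\beta\tilde{G}$; quotienting by a compact subgroup with countable outer base and pulling back along an irreducible map onto $\mathcal P(\w)$, this yields two co-dense subsets $C,D$ with $C\cup D=\mathcal P(\w)$, whose saturations produce a Scheepers \emph{semifilter} $\F$ with $\F^+\subseteq\F$ (Lemma~\ref{towards_semifilter}). Your proposed substitute --- a closed copy of $2^\w$ in which $G$ meets it in an index-two subgroup and the other coset is homeomorphic to an ultrafilter --- is pure assertion; nothing forces such a slice to exist, and in particular nothing in the covering property alone can manufacture maximality.

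Second, and independently, the passage from a semifilter with $\F^+\subseteq\F$ to an actual ultrafilter is precisely where the Scheepers property (as opposed to Menger) is consumed, and it is entirely missing from your outline. The paper applies the partition theorem of Samet--Scheepers--Tsaban to the $\w$-cover $\{O_n:n\in\w\}$ of $\F$ to obtain an interval partition $\phi^{-1}(k)=[n_k,n_{k+1})$ such that the image $\mathcal S=\phi(\F)$ is \emph{centered}; combined with $\mathcal S^+\subseteq\mathcal S$ (inherited from $\F$) this makes $\mathcal S$ a filter, and then an ultrafilter. Without this compression step you only ever have a semifilter, not an ultrafilter, so the implication is not established. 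In short: the construction of the semifilter from the two translates, and the $\w$-cover/partition argument upgrading it to an ultrafilter, are the two missing ideas; the zero-dimensional slicing you propose does not replace either.
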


\begin{corollary} \label{cor_sch}
The existence of a topological group $G$ such that $\beta
G\setminus
G$ is Scheepers (resp. has all finite powers Menger)  
and not $\sigma$-compact is independent from ZFC.
More precisely, such a group exists under  $\mathfrak d=\mathfrak
c$, and its existence yields $P$-points.
\end{corollary}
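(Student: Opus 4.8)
The plan is to deduce the corollary from Theorem~\ref{main_sch}, which equates the existence of the groups in question with the existence of a Scheepers ultrafilter on $\w$. So it suffices to establish two facts: (a) if $\dd=\cc$ then there is a Scheepers ultrafilter; and (b) the existence of a Scheepers ultrafilter implies the existence of a $P$-point. Granting these, everything follows: since CH implies $\dd=\cc$, the group exists in any model of CH by (a) and Theorem~\ref{main_sch}; on the other hand, by a celebrated theorem of Shelah it is consistent that there are no $P$-points, and in such a model (b) together with Theorem~\ref{main_sch} rules out the group. Hence the existence of the group is independent of ZFC, and the two sharper statements of the corollary are exactly (a) and (b) read through Theorem~\ref{main_sch}.

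Part (b) is the easy half. Since every Scheepers space is Menger, it is enough to show that an arbitrary Menger ultrafilter $\U$ on $\w$ is a $P$-point, equivalently that every $\sbst$-decreasing sequence $(X_n)_{n\in\w}$ in $\U$ has a pseudo-intersection in $\U$. Fixing such a sequence, I would argue as follows. For $n\in\w$ put $\cW_n=\{V^n_a:a\in X_n\}$, where $V^n_a=\{X\in\PP(\w):\min(X\cap X_n)=a\}$; each $V^n_a$ depends on only finitely many coordinates and hence is clopen, and $\cW_n$ covers $\U$ because every member $X$ of $\U$ meets $X_n$ (indeed $X\cap X_n\in\U$). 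Applying the Menger property of $\U$ to $(\cW_n)_{n\in\w}$ yields finite sets $N_n\sbst X_n$ such that $\{\bigcup_{a\in N_n}V^n_a:n\in\w\}$ still covers $\U$. Then $Y:=\bigcup_{n\in\w}N_n$ is as required: each $X\in\U$ lies in some $\bigcup_{a\in N_n}V^n_a$, so $\min(X\cap X_n)\in N_n\sbst Y$ and thus $X\cap Y\neq\emptyset$; since $\U$ is an ultrafilter this forces $Y\in\U$; and because the $X_n$ decrease and $N_n\sbst X_n$, one has $Y\sm X_m\sbst\bigcup_{n<m}N_n$ (a finite set) for every $m$. (Alternatively, (b) follows from the known coincidence of Menger filters with Canjar filters together with the fact that every Canjar ultrafilter is a $P$-point.)

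Part (a) is where the real work lies, and I expect it to be the main obstacle. The plan is a transfinite recursion of length $\cc=\dd$ in the spirit of Ketonen's construction of a $P$-point from $\dd=\cc$, carrying along the extra task of defeating every potential Scheepers challenge. One fixes a dominating family $\{f_\alpha:\alpha<\cc\}$, an enumeration $\{B_\alpha:\alpha<\cc\}$ of $\PP(\w)$, and an enumeration $\{(\cW^\alpha_n)_{n\in\w}:\alpha<\cc\}$ of all countable sequences of countable families of clopen subsets of $\PP(\w)$, and builds an increasing chain of filter bases $\F_\alpha$ with $|\F_\alpha|<\cc$, so that at step $\alpha$ one both (i) puts $B_\alpha$ or $\w\sm B_\alpha$ into the filter, ensuring that $\U:=\bigcup_{\alpha<\cc}\F_\alpha$ is an ultrafilter, and (ii) commits to finite subfamilies $\cV^\alpha_n\sbst\cW^\alpha_n$, chosen with the aid of $f_\alpha$, whose unions are to form an $\w$-cover of $\U$. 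The hypothesis $\dd=\cc$ enters twice: it keeps the Ketonen-style part of the construction running (a filter base of size $<\dd$ can always be extended so as to decide one more set while remaining suitable), and, through the dominating family, it supplies cofinally fast witnesses for the selections in (ii). The crux, as in all Canjar-type arguments, is exactly the coordination in (ii): the unions $\bigcup\cV^\alpha_n$ are chosen only against the partial filter base $\F_\alpha$, yet in the end they must form an $\w$-cover (in particular a cover) of the whole ultrafilter $\U$; one must arrange the bookkeeping so that the membership in $\U$ of every point is settled at some later stage in a way compatible with all earlier commitments. Once $\U$ is obtained it is a Scheepers ultrafilter, and Theorem~\ref{main_sch} yields the group demanded by (a).
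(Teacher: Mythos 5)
Your overall architecture is the one the paper uses: reduce everything to Theorem~\ref{main_sch} and prove (a) existence of a Scheepers ultrafilter under $\dd=\cc$ and (b) that a Scheepers (indeed Menger) ultrafilter is a $P$-point, with Shelah's no-$P$-point model giving independence. Your part (b) is correct and is essentially the paper's argument: the paper proves it via Observation~\ref{obs_obv} (a cover of $\F^+$ by the sets $O_n$, $n\in A$, corresponds to $A\in\F$, so Menger forces the $P$-semifilter property, and $\F=\F^+$ for an ultrafilter); your cover by the clopen sets $V^n_a=\{X:\min(X\cap X_n)=a\}$ is the same idea in a slightly different dressing and works.

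Part (a), however, is where your proposal has a genuine gap, and you identify it yourself without closing it. The Ketonen-style recursion you describe founders exactly at the coordination step: the finite selections $\cV^\alpha_n\sbst\cW^\alpha_n$ are committed at stage $\alpha$ against the small filter base $\F_\alpha$, but the final ultrafilter $\U$ contains continuum many sets added at later stages, and nothing in your sketch guarantees that those later points of $\PP(\w)$ are covered (let alone $\w$-covered) by the already-fixed selections; you cannot revisit the challenge later, since the answer to a Menger/Scheepers challenge is a single sequence of finite subfamilies. Making this work requires first reducing arbitrary countable clopen covers of a filter to covers of a canonical combinatorial form tied to the filter structure, and then arranging that the set added to the filter at stage $\alpha$ itself forces the covering of everything above it; this is precisely the content of Canjar's theorem. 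The paper does not redo this construction: it cites Canjar's result that $\dd=\cc$ yields an ultrafilter whose Mathias forcing adds no dominating reals, invokes the equivalence of that property with the Menger property of the filter (Theorem~1 of the Chodounsk\'y--Repov\v{s}--Zdomskyy paper), then Claim~5.5 there (all finite powers of a Menger filter are Menger) and Theorem~3.9 of \cite{COC2} to upgrade to Scheepers. You should either follow that citation chain or actually supply the combinatorial reduction and the extension lemma for filter bases of size $<\dd$; as written, part (a) is a plan, not a proof.
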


Theorem~\ref{main_sch} and Corollary~\ref{cor_sch} are proved in
Section~\ref{scheepers}. Let us note that there exists a ZFC
example of 
a dense Baire subspace $X$ of $[\w]^\w$ all of whose finite
powers are Menger
(and thus also Scheepers), and hence it is indeed essential
in Theorem~\ref{main_sch} and Corollary~\ref{cor_sch} that we
consider remainders of
topological groups. In fact, such a subspace $X$ can be chosen to
be a filter,
see  \cite[Claim~5.5]{ChoRepZdo??} 
 and the proof of  \cite[Theorem~1]{RepZdoZha14}.

It is worth mentioning here that Scheepers ultrafilters have been studied
intensively  under different names for decades: In \cite{Can88} Canjar proved that 
$\mathfrak d=\mathfrak c$ implies the existence of an ultrafilter whose 
Mathias forcing does not add dominating reals. By \cite[Theorem~1.1]{ChoRepZdo??} 
the latter property for filters on $\w$ is equivalent to the Menger one, and 
by \cite[Claim~5.5]{ChoRepZdo??} all finite powers of a Menger filter are Menger.
Combining this with \cite[Theorem~3.9]{COC2} we conclude that for filters on $\w$,
the Menger property is equivalent to the Scheepers one, and hence Scheepers
(equivalently Menger [in all finite powers]) ultrafilters are exactly those studied
in \cite{Can88}. Another descriptions of such ultrafilters may be found 
in \cite{BlaHruVer13} and \cite{HruMin??}, where they were characterized  as ultrafilters
with certain topological and combinatorial properties stronger than being 
a $P$-point, respectively.  In particular, there are no Scheepers ultrafilters 
in models of ZFC without $P$-points.

Regarding the Menger property, we have the following partial
result established in Section~\ref{menger}. 
Note that the assumption on the remainder we make in it is
formally weaker than that
made in Theorem~\ref{main_sch}, see the last equivalent statement
there. 

\begin{theorem} \label{main_men}
It is consistent that for any topological group $G$ and
compactification $bG$, if $(bG\setminus G)^2$ is Menger, then it
is
$\sigma$-compact.
\end{theorem}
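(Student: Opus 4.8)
The plan is to establish the conclusion in any model of ZFC without $P$-points; since the consistency of ``there are no $P$-points'' is classical (Shelah), this proves the statement. So work in such a model, fix a topological group $G$ and a compactification $bG$ with $(bG\setminus G)^2$ Menger, and assume toward a contradiction that $bG\setminus G$ is not $\sigma$-compact.

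\emph{Reductions.} The canonical surjection $\beta G\to bG$ fixing $G$ restricts to a perfect map of $\beta G\setminus G$ onto $bG\setminus G$, whose square is a perfect map of $(\beta G\setminus G)^2$ onto $(bG\setminus G)^2$; since the Menger property and $\sigma$-compactness pass both to perfect preimages and to continuous images, we may assume $bG=\beta G$. If $G$ were locally compact then $\beta G\setminus G$ would be compact, and if $G$ were \v{C}ech-complete then $\beta G\setminus G$ would be an $F_\sigma$ subset of $\beta G$, hence $\sigma$-compact; so $G$ is neither. Being a closed subspace of the Menger space $(\beta G\setminus G)^2$ (via the diagonal), $\beta G\setminus G$ is itself Menger, in particular Lindel\"of, so by Theorem~\ref{ar2} it is either $\sigma$-compact --- which we have excluded --- or Baire; assume the latter. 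Finally, by the structure theory of non-locally compact topological groups with Lindel\"of remainders (the circle of ideas behind Theorems~\ref{ar1} and~\ref{ar2}), such a $G$ is a Lindel\"of $p$-space; together with a quotient by a compact subgroup this lets one reduce further to the case in which $G$ is separable metrizable, so that its completion $\widehat G$ is a Polish group in which $G$ is dense and --- since $G$ is not \v{C}ech-complete --- not a $G_\delta$ subset.

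\emph{Producing a Scheepers ultrafilter.} This is the heart of the matter, and the step I expect to be the main obstacle: from the configuration just described one should manufacture, using the position of $G$ in $\widehat G$ together with the group operation, a free ultrafilter $\U$ on $\w$ whose underlying subspace of $\mathcal P(\w)$ is Menger --- essentially re-running the construction behind the ``hard'' direction of Theorem~\ref{main_sch}. The reason the hypothesis is phrased in terms of the \emph{square} of the remainder --- formally weaker than the ``all finite powers Menger'' hypothesis of Theorem~\ref{main_sch} --- is that the diagonalization producing $\U$ is run against open covers tied to the multiplication map $G\times G\to G$, so that the covering property actually used is that of $(\beta G\setminus G)^2$ rather than of $\beta G\setminus G$ alone; and by \cite[Claim~5.5]{ChoRepZdo??} together with \cite[Theorem~3.9]{COC2}, Mengerness of a filter automatically upgrades to the Scheepers property, so $\U$ is in fact a Scheepers ultrafilter. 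But, as recalled just before the theorem, every Scheepers ultrafilter is a $P$-point, and our model has none; this contradiction shows that the Baire alternative cannot occur, so $\beta G\setminus G$ --- and hence $bG\setminus G$ --- is $\sigma$-compact.
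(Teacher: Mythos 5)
There is a genuine gap, and it sits exactly where you flag it: the step ``Producing a Scheepers ultrafilter'' does not go through, and it is precisely the obstruction that forces the paper onto a completely different route. From the hypothesis that $(\beta G\setminus G)^2$ is Menger, the reduction machinery (the paper's Lemma~\ref{towards_semifilter}) yields only a \emph{semifilter} $\F$ with $\F=\F^+$ whose square is Menger --- not a filter, and a fortiori not an ultrafilter. The upgrade from ``semifilter with $\F^+\subset\F$'' to ``ultrafilter'' in the proof of Theorem~\ref{main_sch} is not a formality: it uses the Scheepers property of $\F$ via the partition theorem of \cite{SamSchTsa09} to produce the interval structure that makes the pushed-forward semifilter $\mathcal S$ \emph{centered} (Claim~\ref{sch_01}), and only then do \cite[Claim~5.5]{ChoRepZdo??} and \cite[Theorem~3.9]{COC2} become applicable. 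With only ``Menger square'' in hand there is no known way to extract centeredness, hence no filter, hence no $P$-point; your appeal to ``re-running the construction behind the hard direction of Theorem~\ref{main_sch}'' assumes exactly the missing ingredient. Your heuristic that the square hypothesis is what feeds the diagonalization ``against the multiplication map'' also does not match what the square is actually good for: in the paper it is used to see that $\F\cap\sim\F$ is Menger (as a closed subspace of $\F\times\sim\F$), hence non-dominating, which yields a monotone surjection $\psi$ with $\psi(\F)^+=\psi(\F)$.

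Consequently the model you propose (a model with no $P$-points) is not known to satisfy the conclusion, and the paper gives no reason to believe it does. The actual proof instead shows directly that it is consistent that \emph{no} semifilter $\F=\F^+$ with Menger square exists, by a countable support iteration of length $\w_2$ over a model of GCH in which, at cofinally many stages, one forces with the poset $\IP^*_\F$ attached to any such candidate semifilter; Lemmas~\ref{bo_menger} and~\ref{w^w-bound} guarantee these posets are proper and $\w^\w$-bounding (so the iteration behaves), and Lemma~\ref{meng_forcing} shows the generic destroys every extension of $\F$ to a semifilter $\G=\G^+$ with Menger square in the final model. A reflection argument then catches any such $\G$ at an intermediate stage. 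Your reductions in the first part are broadly in the right spirit (perfect maps, feathered groups, quotient by a compact subgroup), but without the forcing construction --- or some genuinely new argument deriving a $P$-point from a semifilter $\F=\F^+$ with Menger square --- the proof does not close. Note that whether such a derivation is possible is closely tied to the paper's own open Question~1.7.
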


Let us note that  the properties of Menger, Scheepers, Hurewicz, having
Menger square, etc., are preserved by perfect maps in both
directions.  This implies that if one of
the remainders of a space $X$ has one of these covering properties,
then all others also have it, see the beginning of Section~\ref{scheepers}
for more details and corresponding definitions. Thus Theorems~\ref{main_hur},
\ref{main_sch}, and \ref{main_men} admit several equivalent reformulations,
given by the freedom to consider either all or some (specific) compactifications.


In light of Theorems~\ref{main_men} and \ref{main_sch} it is
natural
to ask the following questions.

\begin{question}
Is there a ZFC example of a topological group with a Menger
non-$\sigma$-compact remainder?
\end{question}

\begin{question}
Is it consistent that there exists a topological group $G$ such
that
$\beta G\setminus G$ is Menger and not Scheepers? Does CH imply
the
existence of such a group?
\end{question}

Since we do not have an analogous statement to
Theorem~\ref{main_sch} for the Menger property (in
Theorem~\ref{main_men} we make a somewhat unpleasant assumption
that
the square of the remainder is Menger), it may still be the case
that for the Menger property there exists a dichotomy similar to
Theorems~\ref{ar1} and \ref{ar2}. In
Section~\ref{dichotomy_menger}
we analyze some properties which might be counterparts of the
Menger
one for remainders of topological groups.

\section{Hurewicz remainders} \label{hurewicz}

According to the definition  on \cite[p. 235]{ArkTka11}, a
topological group $G$ is \emph{feathered} if it contains a
non-empty
compact subspace with countable outer base. Recall that a family $\mathcal U$
of open subsets of a topological space $X$ is an \emph{outer base} for a subset
$A$ of $X$ if $A\subset U$ for all $U\in\mathcal U$, and for every
open $O\supset A$ there exists $U\in\mathcal U$ such that $U\subset O$.
 By
\cite[Lemma~4.3.10]{ArkTka11} every feathered group has a compact
subgroup with countable outer base. A topological group $G$
is \emph{Raikov-complete} if it is complete in the uniformity
generated by sets $\{(x,y)\in G^2:xy^{-1}, x^{-1}y\in U\}$,
where $U$ is a neighbourhood of the neutral element of $G$, see
\cite[\S~3.6]{ArkTka11} and references therein.

The following fact is
probably
well-known.

\begin{lemma} \label{feathered}
For a  feathered group $G$ the following conditions are
equivalent:
\begin{itemize}
\item[$(1)$] $G$ is \v{C}ech-complete;
\item[$(2)$] Each closed subgroup $G_0$ of $G$
admitting a dense $\sigma$-compact subspace is \v{C}ech-complete;
\item[$(3)$] There exists a compact subgroup $H$ of $G$
with countable outer base such that $\overline{\la QH\ra}$ is
\v{C}ech-complete for every countable $Q\subset G$,  where for
$X\subset G$ we denote by $\la X\ra$ the smallest subgroup of $G$
containing $X$.
\end{itemize}
\end{lemma}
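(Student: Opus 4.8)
The strategy is to prove the cycle of implications $(1)\Rightarrow(3)\Rightarrow(2)\Rightarrow(1)$, using the standard fact (from \cite{ArkTka11}) that a feathered group is Čech-complete if and only if it is Raikov-complete, together with the fact that a feathered group contains a compact subgroup $H$ with countable outer base.

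The implication $(1)\Rightarrow(3)$ is the easy direction: if $G$ is Čech-complete, fix via \cite[Lemma~4.3.10]{ArkTka11} a compact subgroup $H\subset G$ with countable outer base. For any countable $Q\subset G$, the subgroup $\la QH\ra$ is feathered (it contains $H$), so its closure $\overline{\la QH\ra}$ is a closed subgroup of a Čech-complete group, hence Čech-complete (closed subspaces of Čech-complete spaces are Čech-complete). For $(3)\Rightarrow(2)$, let $G_0$ be a closed subgroup with a dense $\sigma$-compact subset $S$; write $S=\bigcup_n K_n$ with each $K_n$ compact, and let $Q$ be a countable dense subset of $S$ (which exists, as $\sigma$-compact spaces are Lindelöf, hence... — actually one picks $Q$ countable with $\overline Q\supseteq S$, so $\overline Q=\overline{G_0}=G_0$). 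Then $\la QH\ra\subseteq G_0\cdot H$, and one checks $\overline{\la QH\ra}=\overline{G_0 H}=G_0H$ (using compactness of $H$ to see $G_0H$ is closed). By hypothesis $G_0H$ is Čech-complete; since $H$ is compact, the quotient map $G_0H\to G_0H/H$ is perfect, but more directly $G_0$ is closed in $G_0H$, hence Čech-complete. Finally $(2)\Rightarrow(1)$: $G$ itself is a closed subgroup of $G$, and it is feathered; if $G$ is not Čech-complete it is not Raikov-complete, and its Raikov completion $\widehat G$ is a feathered (even Čech-complete) group in which $G$ is dense. But we need a $\sigma$-compact dense subspace; here we use that a feathered non-Čech-complete group $G$ still contains, via its compact subgroup $H$ with countable outer base and the Lindelöf-type structure, a closed subgroup admitting a dense $\sigma$-compact subspace that fails to be Čech-complete — more carefully, one takes $G_0=G$ and must exhibit a dense $\sigma$-compact subspace of $G$, which is generally false, so instead one argues contrapositively by reducing to a separable closed subgroup.

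The cleanest route for $(2)\Rightarrow(1)$ is: suppose (2) holds but $G$ is not Čech-complete. Since $G$ is feathered, $G/H$ is metrizable for the compact subgroup $H$ with countable outer base, and $G$ is Čech-complete iff $G/H$ is completely metrizable iff $G/H$ is not meager in itself — and $G$ is Čech-complete iff it is Raikov-complete. If $G$ is not Raikov-complete, pick a point $g$ in the Raikov completion $\widehat G\setminus G$; then $G\cup\{g\}$ generates inside $\widehat G$ a subgroup, and intersecting suitably one finds a separable closed subgroup $G_0$ of $G$ (namely the closure in $G$ of a countable subgroup whose closure in $\widehat G$ witnesses incompleteness) such that $G_0$ is not Raikov-complete, hence not Čech-complete, yet $G_0$ is separable, hence has a dense $\sigma$-compact (indeed countable) subspace, contradicting (2).

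The main obstacle I anticipate is the bookkeeping in $(2)\Rightarrow(1)$: one must carefully produce a \emph{closed} subgroup $G_0$ of $G$ that is simultaneously separable (to get the dense $\sigma$-compact subspace for free) and non-Čech-complete, and verify that non-Čech-completeness of $G$ genuinely localizes to such a $G_0$. This hinges on the characterization of Čech-completeness for feathered groups via Raikov-completeness and on the fact that Raikov-incompleteness is witnessed by a single Cauchy filter, which lives on a countably generated — hence separable — closed subgroup. Once that localization is in place, the rest is routine use of: closed subspaces of Čech-complete spaces are Čech-complete; the product/sum of a closed subgroup with a compact subgroup is closed; and feathered $=$ "contains a compact set with countable outer base," which passes to subgroups containing $H$.
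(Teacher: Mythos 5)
Your reorganization of the cycle as $(1)\Rightarrow(3)\Rightarrow(2)\Rightarrow(1)$ is legitimate in outline, but the implication that carries all the weight, $(2)\Rightarrow(1)$, is not actually proved, and the localization claim you lean on is unjustified (and false as stated). You assert that Raikov-incompleteness of $G$ ``is witnessed by a single Cauchy filter, which lives on a countably generated --- hence separable --- closed subgroup.'' First, the two-sided uniformity of a feathered group is in general not metrizable, so a non-convergent Cauchy filter need not have a countable base and there is no countable set it ``lives on''; to extract a countable witness one must first pass to the metrizable perfect quotient by the compact subgroup $H$ with countable outer base and take a non-convergent Cauchy sequence $(q_n)_{n\in\w}$ there. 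Second, and more seriously, the closed subgroup to which incompleteness then localizes is $\overline{\la QH\ra}$ with $Q=\{q_n:n\in\w\}$, and this group is in general \emph{not} separable, because $H$ itself need not be separable (it has a countable outer base in $G$, not a countable base of its own topology). If you drop $H$ and pass to the genuinely separable $\overline{\la Q\ra}$, the argument breaks: the sequence is Cauchy only ``modulo $H$'' (with respect to the quotient pseudometric), and upgrading this to a non-convergent Cauchy filter in the two-sided uniformity of the subgroup uses compactness of the $H$-fibers \emph{inside} that subgroup, so $\overline{\la Q\ra}$ may well be Raikov-complete even though $G$ is not. The way out is that to contradict $(2)$ you never needed separability: $\la QH\ra$ is $\sigma$-compact and dense in $\overline{\la QH\ra}$, so that group is the required violator of $(2)$ --- but proving that it fails to be \v{C}ech-complete requires exactly the machinery the paper builds for its implication $(3)\to(1)$ (a continuous prenorm $N$ with $H=N^{-1}(0)$, the perfect quotient $\pi$ onto a metric space $(X,\rho^*)$, and the equivalence between Raikov-completeness of a subgroup containing $H$ and completeness of $\rho^*$ on its $\pi$-image). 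Your plan supplies no substitute for this, so the hard direction is missing.

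There is also a smaller but genuine error in your $(3)\Rightarrow(2)$: a $\sigma$-compact subspace $S$ of $G_0$ need not be separable, so ``a countable dense subset of $S$'' need not exist; moreover $\la QH\ra\subseteq G_0H$ is unjustified and $G_0H$ need not even be a subgroup. The repair is again to work modulo $H$: $\pi(S)$ is $\sigma$-compact and metrizable, hence separable, so one can choose a countable $Q\subseteq S$ with $\pi(Q)$ dense in $\pi(S)$; since $\overline{\la QH\ra}$ is a closed subgroup containing $H$ (hence $\pi$-saturated), it contains $S$ and therefore $G_0=\overline{S}$, and then $G_0$ is \v{C}ech-complete as a closed subgroup of the \v{C}ech-complete group $\overline{\la QH\ra}$. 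With these two repairs your outline essentially becomes the paper's proof with the roles of $(2)$ and $(3)$ interchanged; as written, it has a genuine gap at the crucial step.
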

\begin{proof}
The implication $(1)\to(2)$ is straightforward,
 and  $(2)\to (3)$ is a direct consequence of
\cite[Prop.~4.3.11]{ArkTka11}.
The proof of $(3)\to (1)$ will be  obtained by   a tiny
modification
of that of \cite[Theorem~4.3.15]{ArkTka11}. In particular, all details missing 
below can be found in the proof of the above-mentioned theorem.

Let $\la V_n:n\in\w\ra$ be a decreasing sequence of open symmetric 
neighbourhoods of $H$ such that $V_{n+1}^2\subset V_n$ for all $n$
and $H=\bigcap_{n\in\w}V_n$. By \cite[Lemma~3.3.10]{ArkTka11}
there exists a continuous prenorm\footnote{Following \cite[\S~3.3]{ArkTka11}
we call a function $N:G\to\mathbb R_+$ a \emph{prenorm}, if 
$N(e)=0$, $N(x^{-1})=N(x)$, and $N(xy)\leq N(x)+N(y)$ for all $x,y\in G$.} on $G$
which satisfies 
$$ \{x\in G: N(x)<1/2^n\}\subset V_n\subset \{x\in G: N(x)<2/2^n\} $$
for all $n\in\w$. Thus $H=\{x\in G:N(x)=0\}$. Let $\rho$ be a pseudometric
on $G$ defined by $\rho(x,y)=N(xy^{-1})+N(x^{-1}y)$ for all $x,y\in G$.
Then $\rho$ is continuous and $\rho(x,y)=0$ iff $xy^{-1},x^{-1}y\in H$.
Consider the equivalence relation $\sim$ on $G$ defined by $x\sim y$ iff $\rho(x,y)=0$
and denote by $X$ the quotient space $G/\sim$. Let $\pi:G\to X$
be the quotient map and set $\rho^*(\pi(x),\pi(y))=\rho(x,y)$
for any $x,y\in G$. It follows that $\pi$ is perfect , $\rho^*$ is well-defined, and is a 
metric generating the quotient topology on $X$. 

The proof of \cite[Theorem~4.3.15]{ArkTka11} is done as follows:
Assuming that $G$ is Raikov-complete, it is shown that $\rho^*$
is complete. The same argument,
applied to any subgroup $G'$ of $G$ which contains $H$ (and hence is closed under 
$\sim$) yields that if $G'$ is Raikov-complete, then
$\rho^*\uhr \pi[G']$ is complete. Our item $(3)$ implies that
$\overline{\la QH\ra}$ is Raikov-complete for every countable $Q\subset G$
because \v{C}ech-complete groups are Raikov-complete, see, e.g., 
\cite[Theorem~4.3.7]{ArkTka11}. Therefore by $(3)$ we have that 
$ \rho^*\uhr \pi[\overline{\la QH\ra}]  $
is complete for any countable $Q\subset G$. Since $\pi$ is perfect,
the latter gives that $\rho^*\uhr Y$ is complete for any separable closed
$Y\subset X$, and hence $\rho^*$ is complete. 
  therefore $G$ is \v{C}ech-complete
being
a perfect preimage of a complete metric space, see \cite[Theorems~3.9.10 and 4.3.26]{Eng}.
\end{proof}

We are in a position now to present the
\medskip

\noindent\textit{Proof of Theorem~\ref{main_hur}.} \ Let $G$ be a
topological group such that $\beta G\setminus G$ is Hurewicz.
Then
$\beta G\setminus G$ is Lindel\"of, hence $G$ is of countable
type \cite{HenIsb57},
and therefore it is feathered. By Lemma~\ref{feathered} it is
enough
to show that
 each closed subgroup $G_0$ of $G$
admitting a dense $\sigma$-compact subspace is \v{C}ech-complete.
Let $G_0$ be as above, $F$ be a dense $\sigma$-compact subspace
of
$G_0$, and $X=\overline{G_0}\setminus G_0$, where the closure is
taken in $\beta G$. Since $X$ is closed in $\beta G\setminus G$,
it
is Hurewicz. Applying \cite[Theorem~27]{BanZdo06}\footnote{We
refer
here to the online version of the paper available from the
web-pages
of the authors, which is an extended version of the published
one.}
to the Hurewicz space $X$ and \v{C}ech-complete space
$\overline{G_0}\setminus F$ containing it, we conclude that there
exists a $\sigma$-compact space $F'$ such that $X\subset
F'\subset
\overline{G_0}\setminus F$, which implies that
$\overline{G_0}\setminus F'$ is a dense (because it contains $F$)
\v{C}ech-complete subspace of $G_0$. Thus $G_0$ is \v{C}ech-complete 
by \cite[Theorem~1.2]{Arh09}, which completes our proof. \hfill
$\Box$
\medskip

It is well-known \cite[Lemma~22]{Zdo06} that if player
$\mathit{II}$
has a winning strategy in the Menger game on a space $X$ (see
 Section~\ref{menger} for its definition) then $X$ is Hurewicz.
 Therefore Theorem~\ref{main_hur} generalizes \cite[Corollary
 3.5]{AurBel??}.

\section{Scheepers remainders}\label{scheepers}

In the proof of Theorem~\ref{main_sch}, which is the main goal of
this section, we shall need set-valued maps, see \cite{RepSem98}
for
more information on them. By a \emph{set-valued map} $\Phi$ from
a
set $X$ into a set $Y$ we understand a map from $X$ into
$\mathcal
P(Y )$ and write $\Phi : X\Rightarrow Y$  (here $\mathcal P(Y)$
denotes the set of all subsets of $Y$). For a subset $A$ of $X$
we
set $\Phi(A) = \bigcup_{x\in A}\Phi(x) \subset Y$. A set-valued
map
$\Phi$ from a topological space $X$ to a topological space $Y$
is
said to be
\begin{itemize}
\item  \emph{compact-valued}, if $\Phi(x)$  is compact for every
$x \in X$;
\item \emph{upper semicontinuous}, if for every open subset $V$
of $Y$ the
set $\Phi^{-1} (V) = \{x \in  X : \Phi(x) \subset V\}$ is open in
$X$.
\end{itemize}
To abuse terminology, we shall call compact-valued upper
semicontinuous maps \emph{cvusc} maps. It is known
\cite[Lemma~1]{Zdo05} that all combinatorial covering properties
considered in this paper are preserved by cvusc maps. Also, if
$f:X\to Y$ is a perfect map, then $f^{-1}:Y\Rightarrow X$
assigning
to $y\in Y$ the subset $f^{-1}(y)$ of $X$, is a cvusc maps.
Therefore the properties of Menger, Scheepers, Hurewicz, having
Menger square, etc., are preserved by perfect maps in both
directions. That is, if $f$ is perfect and $Z\subset X$ (resp.
$Z\subset Y$) has one of these properties, then so does $f(Z)$
(resp. $f^{-1}(Z)$). In particular, this implies that if one of
the
remainders of a space $X$ has one of these covering properties,
then
all others also have it.  In addition, all these properties are
preserved
by
product with $\w$ equipped with the discrete
topology\footnote{For
the Scheepers property this fact  is slightly non-trivial and
follows
from \cite[Proposition~4.7]{TsaZdo08}.}.

We shall also need some additional notation. For
$X\subset\mathcal
P(\w)$ we shall denote by $\sim X$ the set $\{\w\setminus x:x\in
X\}$. Note that $\sim X$ is homeomorphic to $X$ because
$x\mapsto\w\setminus x$ is a homeomorphism from $\mathcal P(\w)$
to
itself. For subsets $a,b$ of $\w$ (resp. $a,b\in\w^\w$)
$a\subset^*
b$  (resp. $a\leq^* b$) means $|a\setminus b|<\w$ (resp.
$|\{n:a(n)>b(n)\}|<\w$). A collection $\F$ of infinite subsets of
$\w$ is called a \emph{semifilter} if for any $a\in\F$ and
$a\subset^* b$ we have $b\in\F$. For a semifilter $\F$ we set
$\F^+=\{x\subset\w:\forall a\in\F (a\cap x\neq\emptyset)\}$. Note
that $\F^+=\mathcal P(\w)\setminus\sim\F$. $\mathfrak Fr$ denotes
the minimal with respect to inclusion semifilter which consists
of
all co-finite sets. For the other notions used in the proof of
the
following statement we refer the reader to \cite{ArkTka11}.

\begin{lemma} \label{towards_semifilter}
Suppose that $G$ is a  topological group, $K$ is a compact
subgroup
of $G$ with countable outer base in $G$, and $QK$ is dense in $G$
for some countable $Q\subset G$. Let $\mathsf P$ be a property of
topological spaces preserved by images under cvusc maps and
product
with $\w$ equipped with the discrete topology.

 If $\beta G\setminus G$ has $\mathsf P$
 and is not $\sigma$-compact, then there exists
a semifilter  $\F$ such that $\F^+\subset\F$ and $\F$ has
$\mathsf
P$. If, moreover, $(\beta G\setminus G)^2$ is Menger, then there
exists a semifilter $\F$ such that $\F=\F^+$ and $\F^2$ is 
Menger.
\end{lemma}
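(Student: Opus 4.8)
The plan is to transport everything onto a separable metrizable quotient of $G$ and then to read off the semifilter from the way a fixed countable dense subset of that quotient approaches the remainder.

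\smallskip
\emph{Reduction.} Since $K$ is a compact subgroup with countable outer base, the quotient map $p\colon G\to G/K$ is open and perfect and $X:=G/K$ is metrizable; as $QK$ is dense in $G$, the image $p[Q]$ is dense in $X$, so $X$ is separable metrizable. Fix a metrizable compactification $cX$ of $X$. Because $p$ is perfect, the extension of $G\xrightarrow{p}X\hookrightarrow cX$ to $\beta G$ maps $\beta G\sm G$ \emph{onto} $Y:=cX\sm X$ by a perfect map; as $\mathsf P$, $\sigma$-compactness and Mengerness of squares all pass across perfect maps in both directions, $Y$ has $\mathsf P$, $Y$ is not $\sigma$-compact, and in the ``moreover'' case $Y^2$ is Menger. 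Finally $G$ is not locally compact (else $\beta G\sm G$ is compact), so $X$, being homogeneous, is nowhere locally compact; hence $Y$ is dense in $cX$ and $cX$ has no isolated points.

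\smallskip
\emph{The semifilter.} Enumerate a countable dense subset $D=\{d_n:n\in\w\}$ of $X$ (say the dense orbit $p[Q]$); for $a\sbst\w$ put $D_a=\{d_n:n\in a\}$, let $\overline{D_a}$ be its closure in $cX$, and set
$$\F:=\{a\in[\w]^\w:\overline{D_a}\cap Y\neq\emptyset\}.$$
Deleting finitely many points of $D_a$ does not kill an accumulation point, so $\F$ is $\sbst^*$-upward closed; as $D\sbst X$ no finite set lies in $\F$, while every cofinite one does (its trace on $D$ stays dense, so its closure is $cX\spst Y$); hence $\F$ is a semifilter. Moreover $\F^+\sbst\F$, since this amounts to $\F\cup\sim\F=\PP(\w)$, and if both $a\notin\F$ and $\w\sm a\notin\F$ then $\overline{D_a}\cup\overline{D_{\w\sm a}}=\overline D=cX$ would be disjoint from $Y$, which is impossible. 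Granting the next step, this already yields the first assertion. For the ``moreover'' part one additionally needs $\F\cap\sim\F=\emptyset$, which closures alone cannot provide (both $\overline{D_a}$ and $\overline{D_{\w\sm a}}$ may be fat near $Y$); instead one replaces ``accumulates at $Y$'' by a quantitative variant — accumulation at a prescribed rate — and uses the Menger property of $Y^2$ to choose that rate so that the resulting $\F$ has $\F\cap\sim\F=\emptyset$ while keeping $\F^+\sbst\F$, i.e.\ $\F=\F^+$; that $\F^2$ is then Menger follows as below.

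\smallskip
\emph{$\F$ has $\mathsf P$ (the crux).} One must display $\F$ as the image under a cvusc map — or, for these properties, even a continuous surjection — of a space built from $Y$ by the only moves under which $\mathsf P$ is known to be preserved, namely cvusc images and products with discrete $\w$. Replacing $cX$ by the Cantor set through a continuous surjection and pulling everything back (this keeps $X$ dense and preserves all the features of $Y$ above) lets one control $\overline{D_a}$ through clopen cylinders: $y\in\overline{D_a}$ iff $a$ meets $I_k(y):=\{n:d_n\uhr k=y\uhr k\}$ for every $k$, and, crucially, $I_k(y)$ depends only on $y\uhr k$. Packaging the scale $k$ together with finite bounds for the witnessing indices into a single countable discrete coordinate, one aims to produce a cvusc map $\Phi\colon Y\times\w\Rightarrow 2^\w$ with values locally constant in the $Y$-variable and range $\F$; since $Y\times\w$ has $\mathsf P$, so would $\F$, and then $\F^2$ is the range of the cvusc map $\Phi\times\Phi$ on $(Y\times\w)^2\cong Y^2\times\w$, whence $\F^2$ is Menger whenever $Y^2$ is. The genuinely delicate point — and the main obstacle — is building $\Phi$: one has to choose the definition of $\F$ so that the ``for every $k$'' quantifier is compatible with a union indexed by a single copy of $\w$ and with the upper semicontinuity demanded of $\Phi$, all while preserving $\F^+\sbst\F$ (resp.\ $\F=\F^+$). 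I expect this to require finer bookkeeping than the bare recipe above, exploiting the self-similarity of a decreasing neighbourhood scale $\{V_n\}$ of $K$ and the homogeneity furnished by the countable group $Q$.
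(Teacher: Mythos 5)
Your reduction to the metrizable remainder $Y=cX\sm X$ and the check that $\F:=\{a:\overline{D_a}\cap Y\neq\emptyset\}$ is a semifilter with $\F^+\sbst\F$ are correct, but the two claims that carry the content of the lemma are precisely the ones you leave open, and the route you sketch for the first runs into a concrete obstruction. Writing $\F=\bigcup_{y\in Y}\Phi(y)$ with $\Phi(y)=\{a: y\in\overline{D_a}\}=\bigcap_{k}\{a:a\cap I_k(y)\neq\emptyset\}$, each fibre is a countable intersection of \emph{open, non-closed} sets with $I_k(y)$ infinite; its trace on $\PP(I_0(y))$ is a comeager, co-dense $G_\delta$ (it misses the dense set $[\w]^{<\w}$), hence not $\sigma$-compact, and in degenerate cases $\Phi(y)$ is all of $[\w]^\w$. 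So $\Phi$ is nowhere near compact-valued, and the universal quantifier over $k$ cannot be traded for a union over a single discrete coordinate: $\F$ is not, in any evident way, a countable union of cvusc images of $Y$. The paper avoids this by building the semifilter in the opposite direction: it first produces sets $C,D\sbst\PP(\w)$ with property $\mathsf P$ as preimages of the (images of the) remainders under an irreducible perfect map $h:\PP(\w)\to bX$, uses homogeneity of the Cantor set to arrange $[\w]^{<\w}\cap C=\emptyset$ and $\Fr\cap D=\emptyset$, and then takes $\F_0$ to be the $\sbst^*$-upward closure of $C$ (and $\I_0$ the downward closure of $D$); these closures are countable unions of continuous images of $C\times\PP(\w)$, i.e.\ cvusc images of $(\beta\tilde G\sm G)\times\w$, which is exactly what the hypothesis on $\mathsf P$ covers. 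The price is that $\F\cup\sim\F=\PP(\w)$ no longer comes for free: the paper manufactures it by embedding $G$ densely into a \v{C}ech-complete group $\tilde G$, picking $g\in\tilde G\sm G$, and using that $\beta\tilde G=(\beta\tilde G\sm G)\cup(\beta\tilde G\sm gG)$ with \emph{both} pieces remainders of copies of $G$ and hence both with property $\mathsf P$. Your definition gets the covering property immediately but at the cost of losing all control over $\mathsf P$.

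The ``moreover'' clause is likewise not proved: ``replace accumulation by a quantitative variant and use the Menger property of $Y^2$ to choose the rate'' is a hope, not an argument, and nothing in your text indicates how the rate would force $\F\cap\sim\F=\emptyset$. The paper's device here is specific: starting from a semifilter $\F$ with $\F^+\sbst\F$ and $\F^2$ Menger, the set $\F\cap\sim\F$ is Menger (a closed copy inside $\F\times\sim\F$), hence its image under $a\mapsto(a\cup\{n+1:n\in a\})\sm a$ is not dominating; a function $\bar k$ witnessing non-domination yields a monotone finite-to-one surjection $\psi:\w\to\w$ with $\psi(\F)^+=\psi(\F)$, and $\psi(\F)$ is the required semifilter with Menger square. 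Some step playing this role is indispensable and is absent from your proposal.
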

\begin{proof}
Observe that $G$  is not locally compact because otherwise its
remainders would be compact.
 Since $G$ is feathered,  there exists a
\v{C}ech-complete group $\tilde{G}$
 containing $G$ as a dense subgroup, see
\cite[Theorem~4.3.16]{ArkTka11}. Let
$\beta\tilde{G}$ be the Stone-\v{C}ech
  compactification
of $\tilde{G}$. It follows that $\beta\tilde{G}\setminus G$ is
not
$\sigma$-compact, and hence $G\neq\tilde{G}$. Fix
$g\in\tilde{G}\setminus G$ and note that $gG$ is dense in
$\beta\tilde{G}$ and $gG\cap G=\emptyset$. Therefore both
$\beta\tilde{G}\setminus G$ and $\beta\tilde{G}\setminus gG$ have
property $\mathsf P$ being remainders of spaces homeomorphic to
$G$,
and $\beta\tilde{G}=(\beta\tilde{G}\setminus G)\bigcup
(\beta\tilde{G}\setminus gG)$.

Note that $K$ has a countable outer base also in $\tilde{G}$, and
hence the quotient space $X:=\tilde{G}/K=\{zK:z\in\tilde{G}\}$ is
metrizable. It is also separable by our assumption on $G$, so
there
exists a metrizable compactification $bX$ of $X$. In addition,
the
quotient map $\pi_K:\tilde{G}\to X$, $\pi_K(z)=zK$, is perfect by
\cite[Theorem~1.5.7]{ArkTka11}, and hence by
\cite[Theorem~3.7.16]{Eng} it can be extended to a (perfect) map
$\pi:\beta\tilde{G}\to bX$ such that
$\pi(\beta\tilde{G}\setminus\tilde{G})=bX\setminus X$. 
$\pi\uhr\tilde{G}=\pi_K$, hence $G=\pi^{-1}(\pi(G))$,
$gG=\pi^{-1}(\pi(gG))$, and consequently
$A:=\pi(\beta\tilde{G}\setminus gG)$ and
$B:=\pi(\beta\tilde{G}\setminus G)$ are both co-dense subsets of
$bX$
with property $\mathsf P$ covering $bX$.

 Note that $bX$ has no isolated points because both
$G=\pi^{-1}(\pi(G))$
 and $\beta\tilde{G}\setminus
G=\pi^{-1}(\pi(\beta\tilde{G}\setminus G))$
 are nowhere locally compact. Since $bX$ is a metrizable compact,
 there exists a continuous surjective map $f:\mathcal P(\w)\to
X$. Applying
 \cite[3.1.C(a)]{Eng} we can find a closed subspace $T$ of
$\mathcal P(\w)$
 such that $f\uhr T\to bX$ is surjective and irreducible, i.e.,
 $f[T']\neq bX$ for any closed $T'\subsetneq T$.  $T$ has no
 isolated points: if $t\in T$ were isolated then the
irreducibility
 of $f\uhr T$ would give that $t=(f\uhr T)^{-1}[f(t)]$,
 which infers that $f(t)$ is isolated in $bX$ and this leads to a
contradiction.
Therefore $T$ is homeomorphic to $\mathcal P(\w)$, and hence
there
exists a continuous surjective irreducible $h:\mathcal P(\w)\to
bX$.
Since $h$ is  irreducible, both $C:=h^{-1}(A)$ and $D:=h^{-1}(B)$
are
co-dense. Since $h$ is perfect, they also have $\mathsf P$: both
of
them are cvusc images of $\beta\tilde{G}\setminus G$. Note  also
that $\mathcal P(\w)=C\bigcup D$.

The Cantor set $\mathcal P(\w)$ has the following fundamental
property (see \cite{ArhMil14} and references therein) which can
be
directly proved by Cantor's celebrated back-and-forth argument:
for
any countable dense subsets $I_0,I_1,J_0,J_1$ of $\mathcal P(\w)$
such that $I_0\cap I_1=\emptyset$ and $J_0\cap J_1=\emptyset$
there
exists a homeomorphism $i:\mathcal P(\w)\to\mathcal P(\w)$ with
the
property $i(I_0)=J_0$ and $i(I_1)=J_1$. Therefore there is no
loss
of generality to assume that $[\w]^{<\w}\subset \mathcal
P(\w)\setminus C$ and $\mathfrak Fr\subset \mathcal
P(\w)\setminus
D$. Set
$$\F_0=\{x\subset\w:\exists c\in C,u\in [\w]^{<\w}, v\subset\w \:
(x=(c\setminus u)\cup v) \},$$
$$\I_0=\{x\subset\w:\exists d\in D, u\in [\w]^{<\w}, v\subset\w
\: (x=(d\cup u)\setminus v) \},$$
and note that both $\F_0$ and $\sim\I_0$ are semifilters. It
follows
that both $\F_0$ and $\I_0$ are countable unions of continuous
images of $C\times\mathcal P(\w)$ and $D\times\mathcal P(\w)$,
respectively, and consequently they are cvusc images of
$(\beta\tilde{G}\setminus G)\times\w$, where $\w$ is considered
with
the discrete topology. Since the property $\mathsf P$  is
preserved
by product with $\w$, we conclude that both $\F_0$ and $\I_0$
have
it.

 Set $\F=\F_0\cup\sim\I_0$ and note
that it has property $\mathsf P$ for the same reason as
$\F_0,\I_0$
do. Since $C\subset\F_0\subset\F$ and
$D\subset\I_0=\sim(\sim\I_0)\subset\sim\F$, we have that
$\mathcal
P(\w)=\F\bigcup\sim\F$. Therefore $\F^+\subset\F$ because
$\F^+=\mathcal P(\w)\setminus\sim\F$.
\smallskip

To prove the ``moreover'' part assume that $\F^2$ is Menger and
consider the following map $\phi:(\F\cap \sim\F)\to [\w]^{\w}$:
$$ \phi(a)=(a\cup\{n+1:n\in a\})\setminus a.$$
Note that $\F\cap\sim\F$ is homeomorphic to the closed subset
$\{(x,x):x\in\mathcal P(\w)\}\cap (\F\times\sim \F)$ of the
Menger
space $\F\times\sim\F$ and thus is Menger itself. Therefore
$\phi(\F\cap\sim\F)$ is not dominating.

Every strictly increasing sequence $\bar{k}=(k_n)_{n\in\w}$ of
integers such that $k_0=0$ generates a monotone surjection
$\psi_{\bar{k}}:\w\to\w$ by letting
$\psi_{\bar{k}}^{-1}(n)=[k_n,k_{n+1})$. We claim that there
exists
$\bar{k}$ as above such that $\psi_{\bar{k}}(\F)^+=
\psi_{\bar{k}}(\F)$. Suppose to the contrary that for every
$\bar{k}$ there exists $a_{\bar{k}}\in\F$ such that
$\psi_{\bar{k}}(a_{\bar{k}})\in \psi_{\bar{k}}(\F)\setminus
\psi_{\bar{k}}(\F)^+$, i.e.,
$\w\setminus\psi_{\bar{k}}(a_{\bar{k}})\in\psi_{\bar{k}}(\F)$.
Then
both
$b_{\bar{k}}:=\psi_{\bar{k}}^{-1}(\psi_{\bar{k}}(a_{\bar{k}}))$
and $\w\setminus b_{\bar{k}}$ are in $\F$, and therefore
$b_{\bar{k}}\in\F\cap\sim\F$. Note, however, that
$\phi(b_{\bar{k}})\subset\{k_n:n\in\w\}$, which means that
$\bar{k}\leq^*\phi(b_{\bar{k}})$. Since $\bar{k}$ was chosen
arbitrarily we get that $\phi(\F\cap\sim\F)$ is dominating, which
is
impossible. This contradiction implies that $\psi(\F)^+=\psi(\F)$
for some monotone surjection $\psi:\w\to\w$, and then $\psi(\F)$
is
the  semifilter with Menger square we were looking for.
\end{proof}

Recall that a family $\mathcal X\subset\mathcal P(\w)$ is
\emph{centered} if $\bigcap\mathcal X'$ is infinite for every
$\mathcal X'\in[\mathcal X]^{<\w}$. We are in a position now to
present the
\medskip

\noindent\textit{Proof of Theorem~\ref{main_sch}.} If $\F$ is a
Scheepers ultrafilter, then $\sim\F$ is a subgroup of $(\mathcal
P(\w),\Delta)$ and $\F\cup\sim\F=\mathcal P(\w)$. Thus $\F$ is a
Scheepers non-$\sigma$-compact (no ultrafilter can be Borel)
remainder of the group $(\sim\F,\Delta)$. Moreover, all finite
powers of
$\F$ are Menger (and hence also Scheepers) by
\cite[Claim~5.5]{ChoRepZdo??}. 

Let us now prove the ``if'' part, i.e., assume that $(\beta
G\setminus G)$
is Scheepers and not $\sigma$-compact. In the same way as at the
beginning of the proof of Theorem~\ref{main_hur} we conclude that
$G$ is feathered. By Lemma~\ref{feathered} we may assume without
loss of generality that $G$ satisfies the premises of
Lemma~\ref{towards_semifilter}. Applying this lemma for $\mathsf
P$
being the Scheepers property, we conclude that
 there exists a Scheepers
semifilter  $\F$  such that $\F^+\subset\F$. For every $n\in\w$
let
us denote by $O_n$ the open subset $\{x\subset\w:n\in x\}$ of
$\mathcal P(\w)$ and note that each $x\in \F$ belongs to
infinitely
many members of
  $\U_0=\{O_n:n\in\w\}$. Applying \cite[Theorem~21]{SamSchTsa09}
(namely the implication $(1)\to(2)$ there) we conclude that there
exists an increasing number sequence $(n_k)_{k\in\w}$ such that
$n_0=0$ and
$$\big\{\{\bigcup O_n:n\in[n_k,n_{k+1}\}:k\in\w\big\}$$
is an $\w$-cover of $\F$. The latter means that for any family
$\{x_0,\ldots,x_l\}\subset \F$ there exist infinitely many
$k\in\w$
such that $x_i\cap [n_k,n_{k+1})\neq\emptyset$ for all $i\leq l$.

Let us define $\phi:\w\to\w$ by letting
$\phi^{-1}(k)=[n_k,n_{k+1})$
for all $k$ and set
$$\mathcal
S=\{s\subset\w:\phi^{-1}(s)\in\F\}=\{\phi(x):x\in\F\}.$$
Then $\mathcal S$ is a Scheepers semifilter being a continuous
image
of $\F$.
\begin{claim}\label{sch_01}
$\mathcal S$ is centered.
\end{claim}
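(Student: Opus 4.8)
The plan is to check centeredness directly, by translating it into the $\omega$-cover statement just obtained. First I would fix a finite subfamily $\{s_0,\dots,s_l\}\subset\mathcal S$ and, using the definition of $\mathcal S$, write each $s_i=\phi(x_i)$ for some $x_i\in\F$; the goal is then to show that $\bigcap_{i\le l}s_i$ is infinite.

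The crucial step is an elementary reformulation of membership in the sets $\phi(x)$. Since $\phi^{-1}(k)=[n_k,n_{k+1})$ and $\phi$ is surjective, for any $x\subset\w$ one has $k\in\phi(x)$ if and only if $x\cap[n_k,n_{k+1})\neq\emptyset$, equivalently $x\in\bigcup\{O_n:n\in[n_k,n_{k+1})\}$. Hence, for every $k\in\w$,
$$ k\in\bigcap_{i\le l}s_i\iff x_i\cap[n_k,n_{k+1})\neq\emptyset\ \text{ for every }i\le l. $$

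The second step is to invoke the $\omega$-cover property of the family $\big\{\{\bigcup O_n:n\in[n_k,n_{k+1})\}:k\in\w\big\}$: applied to the finite subset $\{x_0,\dots,x_l\}$ of $\F$, it produces infinitely many $k\in\w$ with $x_i\cap[n_k,n_{k+1})\neq\emptyset$ for all $i\le l$. By the displayed equivalence, every such $k$ lies in $\bigcap_{i\le l}s_i$, so this intersection is infinite, as required.

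I do not expect a genuine obstacle here; the statement is a bookkeeping consequence of the construction of $\phi$ and $\mathcal S$. The one point deserving attention is the equivalence $k\in\phi(x)\iff x\cap[n_k,n_{k+1})\neq\emptyset$, which is precisely the bridge that makes the $\omega$-cover condition supplied by the Scheepers property (via \cite[Theorem~21]{SamSchTsa09}) say exactly that $\mathcal S$ is centered. Note that neither the semifilter property of $\F$ nor any additional set-theoretic hypothesis is used at this point.
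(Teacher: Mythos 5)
Your proposal is correct and follows essentially the same route as the paper: write each $s_i$ as $\phi(x_i)$ with $x_i\in\F$ (the paper uses $x_i=\phi^{-1}(s_i)$, which is in $\F$ by the definition of $\mathcal S$), apply the $\omega$-cover conclusion to the finite set $\{x_0,\dots,x_l\}$, and observe that each resulting $k$ lies in every $s_i$. The bridging equivalence $k\in\phi(x)\iff x\cap[n_k,n_{k+1})\neq\emptyset$ is exactly the point the paper's proof relies on as well.
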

\begin{proof}
 Given any $s_0,\ldots, s_l\in\mathcal S$, set
$x_i=\phi^{-1}(s_i)$
and take $k\in\w$ such that $x_i\cap[n_k,n_{k+1})=
x_i\cap\phi^{-1}(k)\neq\emptyset$ for all $i\leq l$. There are
infinitely many such $k$'s, and each of them is an element of
$s_i$
for all $i$ because $s_i=\phi(x_i)$.
\end{proof}
\begin{claim}\label{sch_02}
 $\mathcal S^+\subset\mathcal S$.
\end{claim}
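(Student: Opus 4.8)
The plan is to deduce $\mathcal S^+\subset\mathcal S$ from the already-established relation $\F^+\subset\F$ by pulling sets back along $\phi$. Recall the two descriptions $\mathcal S=\{s\subset\w:\phi^{-1}(s)\in\F\}=\{\phi(x):x\in\F\}$. Using the first one, to show $y\in\mathcal S$ for a given $y\in\mathcal S^+$ it is enough to show $\phi^{-1}(y)\in\F$; and since $\F^+\subset\F$, it is in fact enough to show that $\phi^{-1}(y)$ meets every member of $\F$, i.e. that $\phi^{-1}(y)\in\F^+$.

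So fix $y\in\mathcal S^+$ and an arbitrary $a\in\F$; the one real step is to exhibit an element of $a\cap\phi^{-1}(y)$. By the second description of $\mathcal S$ we have $\phi(a)\in\mathcal S$, hence $\phi(a)\cap y\neq\emptyset$ because $y\in\mathcal S^+$. Choose $k\in\phi(a)\cap y$. Since $k\in\phi(a)$, there is $m\in a$ with $\phi(m)=k$, that is, $m\in[n_k,n_{k+1})=\phi^{-1}(k)$; and since $k\in y$ we have $\phi^{-1}(k)\subset\phi^{-1}(y)$, so $m\in a\cap\phi^{-1}(y)$. As $a$ was arbitrary this yields $\phi^{-1}(y)\in\F^+\subset\F$, and therefore $y=\phi(\phi^{-1}(y))\in\mathcal S$ (using surjectivity of $\phi$), which is what we wanted.

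I do not expect any genuine obstacle here, only a small amount of bookkeeping: one should note first that $y$ is infinite — as $\mathcal S$ is a nonempty semifilter it contains every cofinite set, so a finite $y$ would be disjoint from the cofinite set $\w\setminus y\in\mathcal S$, contradicting $y\in\mathcal S^+$ — whence $\phi^{-1}(y)$ is infinite, being a union of infinitely many nonempty blocks $[n_k,n_{k+1})$, so that its membership in $\F^+$ and $\F$ makes sense. Notably, unlike in Claim~\ref{sch_01}, the $\w$-cover property of the partition $\{[n_k,n_{k+1}):k\in\w\}$ is not needed for this claim; only the surjectivity of $\phi$ (equivalently, that each fiber $\phi^{-1}(k)$ is nonempty) is used.
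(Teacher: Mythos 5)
Your proposal is correct and follows essentially the same route as the paper: pull $y\in\mathcal S^+$ back to $\phi^{-1}(y)$, verify $\phi^{-1}(y)\in\F^+$ by intersecting an arbitrary $a\in\F$ with $y$ via $\phi(a)\in\mathcal S$, and then invoke $\F^+\subset\F$ and surjectivity of $\phi$. The extra bookkeeping about infiniteness and the observation that only surjectivity of $\phi$ is needed are correct but not essential.
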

\begin{proof}
 Take any $x\in\mathcal S^+$ and set $y=\phi^{-1}(x)$. Then
$y\in\F^+$:
 given $u\in\F$, note that $\phi(u)\in\mathcal S$, and hence
$|\phi(u)\cap x|=\w,$ which implies that $|u\cap y|=\w$ and thus
$y$
meets all elements of $\F$. Since $\mathcal F^+\subset\F$, we
have
that $y\in\F$, and consequently $x=\phi(y)\in\mathcal S$.
\end{proof}
Let us fix now $s_0,\ldots, s_i\in\mathcal S$ and take arbitrary
$s\in\mathcal S$. Claim~\ref{sch_01} implies that
$|s\cap\bigcap_{i\leq l}s_i|=\w$,  hence $\bigcap_{i\leq
l}s_i\in\mathcal S^+$, and therefore $\bigcap_{i\leq
l}s_i\in\mathcal S$ by Claim~\ref{sch_02}. Thus $\mathcal S$ is a
filter, and consequently it is an ultrafilter by
Claim~\ref{sch_02}.
This completes our proof. \hfill $\Box$
\medskip

We call a semifilter $\F$ a \emph{$P$-semifilter} if for every
sequence $(F_n)_{n\in\w}\in\F^\w$ there exists a sequence
$(A_n)_{n\in\w}$ such that $A_n\in [F_n]^{<\w}$ and
$\bigcup_{n\in\w}A_n\in\F$. Note that if $\F$ is a filter then we
get a standard definition of a $P$-filter. $P$-filters which are
ultrafilters are nothing else but $P$-points.

 Recall that for every
$n\in\w$ we  denote by $O_n$ the clopen subset $\{x\subset\w:n\in
x\}$ of $\mathcal{P}(\w)$.  The following fact is
straightforward.

\begin{observation} \label{obs_obv}
Let $A\subset\w$ and $\F$ be a semifilter. Then $\{O_n:n\in A\}$
covers $\F^+$ iff $A\in\F$. Consequently, if $\F^+$ is Menger,
then
$\F$ is a $P$-semifilter.
\end{observation}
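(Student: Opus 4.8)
The plan is to establish the stated equivalence first, and then obtain the ``consequently'' clause by feeding it into the definition of the Menger property.

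For the equivalence, observe that $\{O_n:n\in A\}$ covers $\F^+$ precisely when every $x\in\F^+$ meets $A$. If $A\in\F$, then by the definition $\F^+=\{x\sbst\w:\forall a\in\F\,(a\cap x\neq\emptyset)\}$ every $x\in\F^+$ meets $A$, so the family is a cover. For the converse I argue contrapositively: suppose $A\notin\F$. Since $\F$ is a semifilter consisting of infinite sets and closed upwards under $\subset^*$, no member of $\F$ is contained in $A$ (such a member would force $A\in\F$); hence $\w\sm A$ meets every member of $\F$, that is, $\w\sm A\in\F^+$. As $\w\sm A$ lies in no $O_n$ with $n\in A$, the family $\{O_n:n\in A\}$ does not cover $\F^+$.

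For the consequence, assume $\F^+$ is Menger and fix $(F_n)_{n\in\w}\in\F^\w$. By the equivalence, for each $n$ the family $\U_n:=\{O_m\cap\F^+:m\in F_n\}$ is an open cover of $\F^+$ (each $O_m$ is clopen in $\mathcal P(\w)$). Applying the Menger property to $(\U_n)_{n\in\w}$ yields finite $\V_n\sbst\U_n$ such that $\{\bigcup\V_n:n\in\w\}$ covers $\F^+$. Pick $A_n\in[F_n]^{<\w}$ with $\bigcup\V_n\sbst\{x\in\F^+:x\cap A_n\neq\emptyset\}$; then, since $\F^+=\bigcup_{n\in\w}\bigcup\V_n$, every $x\in\F^+$ meets $\bigcup_{n\in\w}A_n$. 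Thus $\{O_m:m\in\bigcup_{n\in\w}A_n\}$ covers $\F^+$, so $\bigcup_{n\in\w}A_n\in\F$ by the equivalence, and this is exactly the witness required for $\F$ to be a $P$-semifilter.

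I do not foresee a real obstacle here; the proof is essentially bookkeeping. The only points deserving a moment's care are the semifilter manipulation in the ``only if'' direction --- using upward $\subset^*$-closure and the fact that members of $\F$ are infinite to pass from $A\notin\F$ to $\w\sm A\in\F^+$ --- and matching the Menger selection $(\V_n)_{n\in\w}$ with the combinatorial definition of a $P$-semifilter. Degenerate cases, such as $A$ finite or some $A_n$ empty, cause no difficulty.
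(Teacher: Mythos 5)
Your proof is correct; the paper states this observation without proof (calling it straightforward), and your argument is exactly the intended one: the equivalence via the upward $\subset^*$-closure of $\F$ (if $A\notin\F$ then $\w\sm A\in\F^+$ witnesses the failure of the cover), and the $P$-semifilter conclusion by applying the Menger property to the covers $\{O_m:m\in F_n\}$ and reading off the finite selections as the sets $A_n$. No gaps.
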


\noindent\textit{Proof of Corollary~\ref{cor_sch}.} \ It is known
\cite{Can88} that  under $\mathfrak d=\mathfrak c$ there exists
an
 ultrafilter $\F$ on $\w$ such that the
Mathias forcing $\mathbb M(\F)$ does not  add dominating reals,
see
\cite{Can88} for corresponding definitions. Applying
\cite[Theorem~1]{ChoRepZdo??} we conclude that $\F$ is Menger
when
considered with the topology inherited from $\mathcal P(\w)$. By
\cite[Claim~5.5]{ChoRepZdo??} we have that all finite powers of
$\F$
are Menger, and hence $\F$ is Scheepers by
\cite[Theorem~3.9]{COC2}.

Now suppose that $\F$ is a Menger ultrafilter. By the maximality
of
$\F$ we have $\F=\F^+$. Now it suffices to apply
Observation~\ref{obs_obv}.
 \hfill $\Box$

\section{Menger remainders} \label{menger}
This section is devoted to the proof of Theorem~\ref{main_men}
which
is divided into a sequence of lemmata. In the proof of the next
lemma we shall need the following game of length $\omega$ on a
topological space $X$:
 In the $n$th  move player $I$ chooses an open cover
$\U_n$ of $X$, and player $\mathit{II}$ responds by choosing a
finite $\V_n\subset \U_n$. Player $\mathit{II}$ wins the game if
$\bigcup_{n\in\omega} \bigcup\V_n =X$. Otherwise, player $I$
wins.
We shall call this game
 \emph{the Menger game} on $X$.
 It is well-known  that
  $X$  is Menger  if and only if
 player $I$ has no winning strategy in the Menger game on
$X$, see \cite{Hur25} or \cite[Theorem~13]{COC1}.

Formally, a strategy for player $I$ is a map
$\S:\tau^{<\w}\to\mathcal O(X)$, where $\tau$ is the topology of
$X$
and $\mathcal O(X)$ is the family of all open covers of $X$. The
strategy $\S$ is winning if $\bigcup_{n\in\w}U_n\ne X $ for any
sequence $(U_n)_{n\in\w}\in\tau^\w$ such that $U_n$ is a union of
a
finite subset of $\S(U_0,\ldots, U_{n-1})$ for all $n\in\w$.


\begin{lemma} \label{bo_menger}
Suppose that $\F$ is a Menger semifilter. Then for every sequence
$\la B_i:i\in\w\ra \in (\F^+)^\w$ and increasing $h\in\w^\w$
there
exists increasing $\delta\in\w^\w$ such that
$$\bigcup_{i\in\w} B_i\cap [h(2\delta(i)),
h(2\delta(i+1)))\in\F^+.$$
 \end{lemma}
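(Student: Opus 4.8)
The plan is to run a Menger-game argument on $\F$. Recall from the discussion above that, since $\F$ is Menger, player $I$ has no winning strategy in the Menger game on $\F$. I will describe a strategy for player $I$ with the feature that from \emph{any} run consistent with it that player $\mathit{II}$ wins one can read off, via the finite subcovers $\mathit{II}$ chooses, an increasing $\delta\in\w^\w$ witnessing the conclusion; since the strategy cannot be winning, such a run exists and we are done.

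First I would record two elementary facts. Every semifilter contains all cofinite subsets of $\w$, so $\F^+\sbst\Fr^+=[\w]^{\w}$; in particular each $B_i$ is infinite. More crucially, $a\cap B$ is \emph{infinite} for every $a\in\F$ and $B\in\F^+$: otherwise $a':=a\sm B$ would satisfy $a\subset^* a'$, hence $a'\in\F$, while $a'\cap B=\emptyset$, contradicting $B\in\F^+$. Now the strategy. Put $e_0=0$, and at the start of round $n$ assume $\mathit{II}$'s earlier responses have determined even numbers $e_0<e_1<\dots<e_n$. For each even $m>e_n$ set
$$A_{n,m}=\big\{a\in\F:\ a\cap B_n\cap[h(e_n),h(m))\neq\emptyset\big\}.$$
Each $A_{n,m}$ is clopen in $\F$, being $\F\cap\bigcup_{k\in B_n\cap[h(e_n),h(m))}O_k$, and $\U_n:=\{A_{n,m}:m\text{ even},\ m>e_n\}$ covers $\F$, because for $a\in\F$ the infinite set $a\cap B_n$ meets $[h(e_n),\w)$, and $h$ increasing (hence unbounded) puts some such point into $[h(e_n),h(m))$ for an even $m>e_n$. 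Player $I$ plays $\U_n$; when $\mathit{II}$ answers with a finite $\V_n\sbst\U_n$, let $e_{n+1}$ be the largest index occurring in $\V_n$ (or $e_n+2$ if $\V_n=\emptyset$). Since $A_{n,m}\sbst A_{n,m'}$ for $m\le m'$, we get $\bigcup\V_n\sbst A_{n,e_{n+1}}$, and $e_{n+1}$ is even with $e_{n+1}>e_n$. This defines a strategy for player $I$ (the cover $\U_n$ depends only on the numbers $e_1,\dots,e_n$ read off $\mathit{II}$'s previous moves).

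As $\F$ is Menger this strategy is not winning, so there is a run $(\V_n)_{n\in\w}$ consistent with it with $\bigcup_{n\in\w}\bigcup\V_n=\F$. Let $(e_n)_{n\in\w}$ be the associated sequence and set $\delta(i):=e_i/2$; then $\delta\in\w^\w$ is strictly increasing. Moreover
$$\bigcup_{i\in\w}B_i\cap[h(2\delta(i)),h(2\delta(i+1)))=\bigcup_{i\in\w}B_i\cap[h(e_i),h(e_{i+1}))\in\F^+ ,$$
since, given $a\in\F$, picking $n$ with $a\in\bigcup\V_n\sbst A_{n,e_{n+1}}$ yields $a\cap B_n\cap[h(e_n),h(e_{n+1}))\neq\emptyset$, so $a$ meets the $i=n$ term of the union; thus the union meets every member of $\F$, which is exactly membership in $\F^+$.

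The step I expect to be the crux is the design of the covers $\U_n$: they must simultaneously be open, actually cover $\F$ (this is precisely where infinitude of $a\cap B_n$ is used, to push points past $h(e_n)$), and be \emph{nested} in $m$ so that player $\mathit{II}$'s \emph{finite} response suffices to pin down the next block-endpoint $e_{n+1}$. The remaining points — keeping all $e_n$ even so that $\delta=e/2$ is legitimate, and observing that ``$\mathit{II}$ wins'' translates literally into ``the union lies in $\F^+$'' — are routine bookkeeping.
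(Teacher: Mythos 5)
Your proposal is correct and follows essentially the same route as the paper: player $I$ plays, in round $i$, an open cover of $\F$ built from the sets $O_k$ with $k\in B_i$ beyond $h(2\delta(i))$, and player $\mathit{II}$'s finite responses in a losing run for $I$ determine $\delta(i+1)$ and witness that the union lies in $\F^+$. The only cosmetic difference is that you bundle the $O_k$'s into nested unions $A_{n,m}$, whereas the paper lets $\mathit{II}$ pick finitely many $O_k$'s directly and then chooses $\delta(i+1)$ large enough to absorb them.
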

\begin{proof}
For every $n\in\w$ let us denote by $O_n$ the  subset
$\{x\subset\w:n\in x\}$ of $\mathcal P(\w)$ and note that $O_n$
is
clopen. It is easy to see that for $B\subset\w$ the collection
$\U_B:=\{O_n:n\in B\}$ is an open cover of $\F$ if and only if
$B\in\F^+$. Set $\delta(0)=0$ and
 consider the following strategy for player $\mathit{I}$ in the
Menger game on $\F$:
In the $0$th move he  chooses $\U_{B_0\setminus
h(0)}=\U_{B_0\setminus h(\delta(0))}$.
 Suppose that for some $i\in\w$ we have already defined
$\delta(i)$. Then
player $\mathit{I}$  chooses
 $\U_{B_i\setminus h(2\delta(i))}$. If player $\mathit{II}$
responds by choosing
$\V_i\in [\U_{B_i\setminus h(2\delta(i))}]^{<\w}$,  then we
define
$\delta(i+1)$ to be so that $\V_i\subset\{O_n:n\in
[h(2\delta(i)),
h(2\delta(i+1)))\cap B_i\}$, and the next move of player
$\mathit{I}$ is $\U_{B_{i+1}\setminus h(2\delta(i+1))}$.

The strategy for player $\mathit{I}$ we described above is not
winning, so there exists a run in the Menger game in which he
uses
this strategy and looses. Let $\delta$ be the function  defined
in
the course of this run. It follows that
$\bigcup\{\V_i:i\in\w\}\supset\F$, where the $\V_i$s are the
moves
of player $\mathit{II}$,  and hence
$$\bigcup_{i\in\w} B_i\cap [h(2\delta(i)),
h(2\delta(i+1)))\in\F^+$$
because $\V_i\subset\{O_n:n\in [h(2\delta(i)),
h(2\delta(i+1)))\cap
B_i\}$.
\end{proof}

For a semifilter $\F$ we denote by $\IP_\F$ the poset consisting
of
all partial maps $p$ from $\w\times\w$ to $2$ such that for every
$n\in\w$ the domain of $p_n:k\mapsto p(n,k)$ is an element of
$\sim\F$. If, moreover, we assume that and
$\mathrm{dom}(p_n)\subset
\mathrm{dom}(p_{n+1})$ for all $n$, the corresponding poset will
be
denoted by $\IP_\F^*$. A condition $q$ is stronger than $p$ (in
this
case we write $q\leq p$) if $p\subset q$. For filters $\F$ the
poset
$\IP_\F^*$ is obviously dense in $\IP_\F$, and the latter is
proper
and $\w^\w$-bounding if $\F$ is a non-meager $P$-filter
\cite[Fact
VI.4.3, Lemma~VI.4.4]{She_propimp}. In light of
Observation~\ref{obs_obv}, the following lemma may be thought of
as
a topological counterpart of \cite[Fact VI.4.3,
Lemma~VI.4.4]{She_propimp}.

\begin{lemma} \label{w^w-bound}
If $\F^+$ is a Menger semifilter, then both  $\IP_\F$ and
$\IP_\F^*$
are proper and $\w^\w$-bounding.
\end{lemma}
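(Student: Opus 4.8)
The plan is to establish properness and $\w^\w$-boundingness simultaneously, by showing that for every sufficiently large regular $\theta$, every countable $M\prec H(\theta)$ with $\F,\IP_\F\in M$, and every $p\in\IP_\F\cap M$ there is a single $q\le p$ which is $(M,\IP_\F)$-generic and forces every $\IP_\F$-name lying in $M$ for an element of $\w^\w$ to be dominated by a ground model function; the first clause is properness, and by the usual density argument the conjunction yields $\w^\w$-boundingness. I would first record two closure properties of $\sim\F$ that follow directly from the semifilter axioms: $[\w]^{<\w}\sbst\sim\F$, and if $c\in\sim\F$ and $u\in[\w]^{<\w}$ then $c\cup u\in\sim\F$ (because $\w\sm c\sbst^*(\w\sm c)\sm u=\w\sm(c\cup u)$, their difference $(\w\sm c)\cap u$ being finite, and $\F$ is $\sbst^*$-upward closed). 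Consequently a column of a condition may always be enlarged on finitely many new coordinates with arbitrary values, and any two conditions below $p$ that enlarge $p$ on a finite set of columns, there only by finite sets, and agree on the common domain, are compatible. The whole construction will stay inside $M$ and use only such finite enlargements; $\IP_\F$ and $\IP_\F^*$ are treated together, the single extra rule for $\IP_\F^*$ being that whenever a coordinate is put into the domain of a column $n$ it is also put into the domains of all columns $n'>n$ (still legitimate, and preserving nestedness).

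The engine is the Menger game on the space $\F^+$: as $\F^+$ is Menger, player~$\mathit I$ has no winning strategy in it (see the discussion before Lemma~\ref{bo_menger}), and by Observation~\ref{obs_obv} a family $\{O_m:m\in A\}$ is an open cover of $\F^+$ exactly when $A\in\F$, i.e.\ $\w\sm A\in\sim\F$. I would design a strategy for $\mathit I$ \emph{every} lost run of which produces a suitable $q$, and then invoke non-winnability to choose such a run. Fix an enumeration $\la\tau_i:i\in\w\ra\in M$, with infinite repetitions, of $M\cap\big(\{\text{dense open subsets of }\IP_\F\}\cup\{\text{names for elements of }\w^\w\}\big)$, together with bookkeeping deciding at which step each column occurring in the tasks is ``activated'', arranged so that every such column is activated before it is first touched. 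Player~$\mathit I$ maintains a finite enlargement $p_i\in M$ of $p$ supported on the currently activated columns, partial values $g\uhr i$ of the dominating function, and a reservoir in $\F$; his $i$-th move is the cover of $\F^+$ indexed by the reservoir, player~$\mathit{II}$ answers with a finite set $s_i$ of coordinates, and player~$\mathit I$ then (i) extends $p_i$ to $p_{i+1}\in M$ using only coordinates taken from the reservoir but outside $s_i$, so as to put $p_{i+1}\in\tau_i$ if $\tau_i$ is a dense set, or to force $\tau_i(i)\le g(i)$ for a newly chosen value $g(i)$ if $\tau_i$ is a name, and (ii) updates the reservoir by deleting $s_i$ and the coordinates just used (it stays in $\F$ by finite deletion), and, if columns have been newly activated, cutting it down so as to sit inside their domain-complements.

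Setting $q_n:=\bigcup_i(p_i)_n$, a lost run has $\{O_m:m\in\bigcup_i s_i\}$ covering $\F^+$, hence $\bigcup_i s_i\in\F$ by Observation~\ref{obs_obv}, and since the $s_i$ are pairwise disjoint and finite also $\bigcup_{i\ge j}s_i\in\F$ for every $j$. If a column $n$ is activated at step $j_n$, every coordinate ever added to it comes from a reservoir at a step $\ge j_n$, each such reservoir lying in $\w\sm\dom(p_n)$ and disjoint from all the $s_i$; therefore $\w\sm\dom(q_n)\spst\bigcup_{i\ge j_n}s_i\in\F$, so $\dom(q_n)\in\sim\F$ (and the domains are nested in the $\IP_\F^*$ case), giving $q\in\IP_\F$ with $q\le p_i\le p$ for all $i$. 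Each dense set having been met infinitely often makes $q$ $(M,\IP_\F)$-generic, and each name having been treated cofinally often makes $q$ force it to be dominated by $g$. The step I expect to be the real obstacle is (ii)'s ``cutting the reservoir into the domain-complements'': for a mere semifilter $\sim\F$ need not be closed under finite unions, so $\w\sm\dom(p_n)$ and the reservoir (both in $\F$) may meet in a set outside $\F$, and distinct columns of $p$ may even have essentially disjoint domain-complements; feeding all columns from a \emph{single} Menger game on $\F^+$ therefore forces one to interleave the activation of columns with the play so that each column is handled only after the reservoir has been refined into its domain-complement while still remaining $\F$-large --- and this is exactly the diagonalization quantified by Lemma~\ref{bo_menger} applied to the Menger semifilter $\F^+$ (via $(\F^+)^+=\F$).
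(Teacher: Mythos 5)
Your overall architecture --- driving a fusion argument by a single run of the Menger game on $\F^+$ that player $\mathit{I}$ loses --- is the same as the paper's, but two steps break down, and the first is fatal. You insist that each $p_{i+1}$ be a \emph{finite} enlargement of $p_i$ and that it land \emph{inside} the dense set $\tau_i$. A dense open subset of $\IP_\F$ need not contain any finite enlargement of a given condition: for instance $D=\{r\in\IP_\F:\mathrm{dom}(r_0)\mbox{ is infinite}\}$ is dense open (since $\F^+$ Menger forces $\F\neq\Fr$, so $\sim\F$ contains an infinite set, and $\sim\F$ is closed under almost-equality), yet no finite enlargement of a condition whose zeroth column has finite domain enters it; the same phenomenon blocks your treatment of names, since deciding $\name{f}(i)$ may require an infinite extension of a column. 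The paper's proof never puts any $p^n$ into $D_n$. Instead it lets $p^{n,j}$ extend $p^n$ by \emph{arbitrary} (possibly infinite) sets, subject only to avoiding the finite reserved segments $s^n_i$, and does this once for \emph{each} of the finitely many fillings $t^{n,j}$ of those segments, arranging $p^{n,j}\cup\bigcup_{i\le n}t^{n,j}_i\in D_n$. Genericity is then the predensity below $q$ of the finite set $\{p^{n,j}\cup\bigcup_{i\le n}t^{n,j}_i:j\le N\}\sbst D_n\cap M$ (any $r\le q$ whose domain covers the $s^n_i$ realizes one of the fillings), and $\w^\w$-bounding comes for free because this finite set bounds the possible values of $\name{f}(k)$. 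Without this device your $q$ is not generic and no ground-model bound on names is produced.

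The second gap is the one you flag yourself: for a mere semifilter there is no single reservoir in $\F$ that can be cut down into the domain-complements of all activated columns, and appealing to Lemma~\ref{bo_menger} does not repair this. That lemma needs the sequence $(B_i)$ given in advance, whereas your domain-complements are produced dynamically and depend on the run being played; moreover its conclusion places the output inside each $B_i$ only on a single finite block $[h(2\delta(i)),h(2\delta(i+1)))$, not globally, so it cannot serve as a reservoir contained in every $\w\sm\mathrm{dom}(p_n)$. The paper sidesteps the issue by never using one reservoir: at stage $n$ player $\mathit{I}$ plays the cover of $\F^+$ by the sets $\bigcap_{i\le n}O_{s^n_i}$ with $s^n_i=(\w\sm\mathrm{dom}(p^n_i))\cap l$, so each column is charged against \emph{its own} tail $\w\sm\mathrm{dom}(p^n_i)\in\F$, and the losing run hands you, for each $i$ separately, a set $y_i=\bigcup_{n\ge i}s^n_i\in(\F^+)^+=\F$ disjoint from $\mathrm{dom}(q_i)$. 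You need to rebuild the strategy around these two devices --- covers by finite intersections of the $O_{s^n_i}$, and extensions modulo all finite fillings of the reserved segments; the single-reservoir, finite-enlargement version cannot be completed.
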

\begin{proof}
We shall present the proof for $\IP_\F$. The one for the poset
$\IP^*_\F$ is completely analogous.

To prove the properness let us fix a countable elementary
submodel
$M\ni\IP_\F$ of $H(\theta)$ for $\theta$ big enough, a condition
$p\in\IP_\F\cap M$, and list all open dense subsets of $\IP_\F$
which are elements of $M$ as $\{D_i:i\in\w\}$. Let us denote by
$\tau$ the collection of all open subsets of $\mathcal P(\w)$.
For
every $s\in [\w]^{<\w}$ we shall denote by $O_s$ the set
$\{x\subset\w:x\cap s\neq\emptyset\}$. $O_s$ is clearly a clopen
subset of $\mathcal P(\w)$.

In what follows we shall define a strategy $\S:\tau^{<\w}\to
\mathcal O(\F^+)$ of player $\mathit{I}$ in the Menger game on
$\F^+$ as well as a map $\S_0:\tau^{<\w}\cap M\to\IP_\F\cap M$.
Set
$p^0=p$, $\S_0(\emptyset)=p^0$,  and
$$\S(\emptyset)= \{ O_s: \exists l\in\w\: [s=(\w\setminus
\mathrm{dom}(p^0_0))\cap l]\}.$$
Now suppose that for some $n\in\w$ and all sequences
$(U_k)_{k<n}$
of open subsets of $\mathcal P(\w)$ we have defined
$p^n=\S_0((U_k)_{k<n})$ and $\S((U_k)_{k<n})$, and fix such a
sequence $(U_k)_{k\leq n}$ of length $n$.  If  $U_n$ is not of
the
form $\bigcap_{i\leq n} O_{s^n_i}$, where $s^n_i=
(\w\setminus\mathrm{dom}(p^n_i))\cap l$ for some $l\in\w$, then
$\S_0((U_k)_{k\leq n})$ and $\S((U_k)_{k\leq n})$ are irrelevant.
Otherwise write $\prod_{i\leq n} 2^{\{i\}\times s^n_i}$ in the
form
$\{(t^{n,j}_i)_{i\leq n}:j\leq N\}$, set $p^{n,-1}=p^n$, and by
induction on $j\leq N$ define a decreasing sequence
$(p^{n,j})_{j\leq N}$ of conditions in $\IP_\F\cap M$ with the
following properties:
\begin{itemize}
 \item[$(i)$] $\mathrm{dom}(p^{n,j}_i)\cap s^n_i=\emptyset$ for
all $j\leq N$
and $i\leq n$;
\item[$(ii)$] $p^{n,j} \cup\bigcup_{i\leq n}t^{n,j}_i\in D_n$ for
all $j\leq N$.
\end{itemize}
Then we let $p^{n+1}=p^{n,N}$,  $\S_0((U_k)_{k\leq n})=p^{n+1}$
and
$$\S((U_k)_{k\leq n})=\{ \bigcap_{i\leq n+1}O_{s^{n+1}_i}:
\exists l\in\w\forall i\leq n+1\:
\big[s^{n+1}_i=(\w\setminus \mathrm{dom}(p^{n+1}_i))\cap
l\big]\}.$$
Since $\F^+$ is Menger, $\S$ cannot be a winning strategy for
player
$\mathit{I}$, and hence there exists a sequence $(U_n)_{n\in\w}$
of
open subsets of $\mathcal P(\w)$ with the following properties:
\begin{itemize}
 \item[$(iii)$] $p^n:=\S_0((U_i)_{i<n})\in\IP_\F\cap M$ for all
$n\in\w$;
\item[$(iv)$] For every $n\in\w$ there exists $l_n\in\w$ such
that
$U_n=\bigcap_{i\leq n}O_{s^n_i}$, where $ s^n_i=(\w\setminus
\mathrm{dom}(p^n_i))\cap l_n$;
\item[$(v)$] $l_n\leq l_{n+1}$ for all $n\in\w$;
\item[$(vi)$] $\mathrm{dom}(p^{n+1}_i)\cap s^n_i=\emptyset$; and
\item[$(vii)$] $\F^+\subset\bigcup_{n\in\w} U_n$.
\end{itemize}
 Items $(iv)$ and $(vii)$ imply that for every $i\in\w$
we have $\F^+\subset\bigcup_{n\geq i}O_{s^n_i}$, therefore for
every
$x\in\F^+$ there exists $n\geq i$ such that $x\cap
s^n_i\neq\emptyset$, which is equivalent to $y_i:=\bigcup_{n\geq
i}s^n_i\in(\F^+)^+=\F$. By the definition of $s^n_i$ in $(iv)$
together with items $(v)$ and $(vi)$ we have that $s^n_i\subset
s^{n+1}_i$ for all $n\in\w$ and $i\leq n$, and hence
$y_i\cap\bigcup_{n\geq i}\mathrm{dom}(p^n_i)=\emptyset$. Since
$i\in\w$ was chosen arbitrarily, we conclude that
$q:=\bigcup_{n\in\w}p_n\in\IP_\F$.

We claim that $q$ is $(M,\IP_\F)$-generic. Indeed, pick $q'\leq
q$,
$n\in\w$, and $r\leq q'$ such that $\mathrm{dom}(r_i)\supset
s^n_i$
for all $i\leq n$. Then there exists $j\leq N$ such that
$r_i\uhr(\{i\}\times s^n_i)=t^{n,j}_i$ for all $i\leq n$, and
consequently
$$ r  \leq q \cup\bigcup_{i\leq n}t^{n,j}_i
\leq p^{n,j} \cup\bigcup_{i\leq n}t^{n,j}_i\in D_n $$ by $(ii)$.
This implies that $r$ is compatible with an element of $D_n\cap
M$
and thus completes our proof of the properness.

Note that for every $n$ we have found a finite subset $A_n$
(namely
$\{p^{n,j}:j\leq N\}$) of $D_n\cap M$ such that any extension of
$q$
is compatible with some element of $A_n$. If $\name{f}\in M$ is a
$\IP_\F$-name for a real, then    the open dense subset of
$\IP_\F$
consisting of those conditions which determine $\name{f}(k)$
equals
$D_{n_k}$ for some $n_k\in\w$. It follows from
 the above that $q$ forces that $\name{f}(k)$ cannot exceed
 $\max\{l:\exists u\in A_{n_k}\: (u\Vdash
\name{f}(k)=\check{l})\}$,
 and therefore $\IP_\F$ is $\w^\w$-bounding.
\end{proof}

For a relation $R$ on $\w$ and $x,y\in\w^\w$ we denote by
$[x\:R\:y]$ the set $\{n: x(n)\:R\:y(n)\}$.

\begin{lemma} \label{meng_forcing}
Suppose that $\F=\F^+$ is a  semifilter with Menger square. Let
$x$ be   $\IP_\F^*$-generic, $\IQ\in V[x]$ be an
$\w^\w$-bounding poset, and $H$ be a $\IQ$-generic over $V[x]$.
Then
in $V[x*H]$ there is no  semifilter $\G=\G^+$  containing $\F$
such
that $\G^2$ is Menger.
\end{lemma}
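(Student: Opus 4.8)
The plan is to argue by contradiction: assume that in $V[x*H]$ there is a semifilter $\G=\G^+$ with $\F\subseteq\G$ and $\G^2$ Menger. Let $\bar\F$ denote the semifilter of $V[x*H]$ generated by $\F\cap V$. Then $\bar\F\subseteq\G$, and since $\G=\G^+$ every member of $\G$ meets every member of $\bar\F$, i.e.\ $\G\subseteq\bar\F^+$; thus $\bar\F\subseteq\G\subseteq\bar\F^+$ and, because $\G=\G^+=\mathcal P(\w)\setminus(\sim\G)$, the semifilter $\G$ is an ``ultra-semifilter'': $\mathcal P(\w)=\G\sqcup(\sim\G)$. Moreover $\G^2$ Menger implies $\G$ Menger, hence $\G$ is a $P$-semifilter by Observation~\ref{obs_obv} and Lemma~\ref{bo_menger} applies to $\G$. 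Finally, $\F=\F^+$ is Menger (as $\F^2$ is), so by Lemma~\ref{w^w-bound} the poset $\IP_\F^*$ is proper and $\w^\w$-bounding; combined with $\IQ$ being $\w^\w$-bounding over $V[x]$ this shows every $f\in\w^\w\cap V[x*H]$ is $\leq^*$ some member of $\w^\w\cap V$.

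Next I would extract the relevant generic data. For $n\in\w$ put $y_n=\{k:x(n,k)=1\}$. A short density argument shows $y_n\in\bar\F^+$ and $\w\setminus y_n\in\bar\F^+$: given a condition $p$ and $a\in\F\cap V$, one has $a\cap(\w\setminus\mathrm{dom}(p_n))\neq\emptyset$ because $a\in\F=\F^+$ while $\w\setminus\mathrm{dom}(p_n)\in\F$, so $p$ extends to a condition forcing a chosen point of $a$ into $y_n$ (respectively, into its complement), the ``increasing domains in $\sim\F$'' clause being easy to maintain. Since $\G$ is an ultra-semifilter it decides each $y_n$; fix $\name z_n\in\{y_n,\w\setminus y_n\}\cap\G$, so $z_n=\{k:x(n,k)=c_n\}$ for some $c\in 2^\w\cap V[x*H]$. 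Now $(z_n)_n\in\G^\w=(\G^+)^\w$, so Lemma~\ref{bo_menger} (with $h=\mathrm{id}$) produces an increasing $\delta\in\w^\w\cap V[x*H]$ with $T:=\bigcup_n\big(z_n\cap[2\delta(n),2\delta(n+1))\big)\in\G\subseteq\bar\F^+$. By the bounding observation above, $2\delta\leq^* g_0$ for some $g_0\in\w^\w\cap V$, and since $\IP_\F^*$ is proper I may assume a single condition forces all of this.

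The contradiction is then meant to be supplied by the following combinatorial property of $\IP_\F^*$, to be proved by a density/fusion argument carried out in $V$: $(\dagger)$ whenever $\name\G$ is forced to be a semifilter with $\name\G=\name\G^+\supseteq\bar\F$ and $\name\G^2$ Menger, and $g_0\in\w^\w\cap V$ is fixed, then $\IP_\F^* * \name\IQ$ forces that for every increasing $g\leq^* g_0$ the set $\bigcup_n\big(\name z_n\cap[g(n),g(n+1))\big)$ is \emph{not} in $\bar\F^+$, where $\name z_n$ is the $\G$-side of the $n$th generic column. Granting $(\dagger)$ and applying it with $g=2\delta$ contradicts $T\in\bar\F^+$, which finishes the proof. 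Informally, $(\dagger)$ asserts that the ``anti-$\G$'' sides $\w\setminus\name z_n$, assembled along \emph{any} ground-model-dominated interval partition, are forced to contain a member of $\F\cap V$.

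Proving $(\dagger)$ is where the work lies, and I expect it to be the main obstacle. A naive density argument treating a single pair $(g,c)\in V$ is insufficient, since both the interval partition $g$ (produced by the Menger game inside $V[x*H]$) and the ``side'' function $c$ live in the generic extension; and the ``for all $c\in 2^\w$'' strengthening of $(\dagger)$ is outright false, because applied to $c$ and to its bitwise complement it would exhibit two disjoint members of $\F=\F^+$. Hence one must use that $\name c$ is not arbitrary but definable from $\name\G$. The route I would take is to recast everything inside $V$ as a strategy for player $I$ in the Menger game on $\F^2$ — this is where the \emph{square} hypotheses enter, via the interleaved-intervals form of Lemma~\ref{bo_menger} (the reason for the factor $2$ there) that lets one peel off two sub-assemblies at once — whose moves are covers of $\F$ of the form $\{O_m:m\in B\}$ with $O_m=\{x\subseteq\w:m\in x\}$ and $B\in\F=\F^+$, chosen so that player $\mathit{II}$'s finite responses drive a fusion $p^0\geq p^1\geq\cdots$ of conditions in $\IP_\F^*$ together with a decreasing sequence of $\name\IQ$-names; since $\F^2$ is Menger the strategy cannot be winning, and along a losing run the responses read off enough of $\name\delta$ and of the $\G$-side function to yield a single condition forcing that $T$ omits an explicit member of $\F\cap V$, i.e.\ $T\notin\bar\F^+$. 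The delicate points are to make this fusion interact correctly with $\name\IQ$ being only $\w^\w$-bounding (so that $\name\delta$, although not read off $V$, is approximated finely enough) and with the ``increasing domains'' clause of $\IP_\F^*$, which is exactly what allows a single extension to decide an $\F$-co-small chunk of a column and thereby pin down a whole block of $T$.
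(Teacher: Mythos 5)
There is a genuine gap: your argument reduces the lemma to the claim $(\dagger)$, which you leave unproved and which you yourself identify as ``where the work lies.'' Worse, your own analysis shows why the obvious route to $(\dagger)$ fails (the side function $c$ and the interval partition $\delta$ both live only in the extension, and the ``for all $c$'' version is false), so what remains is precisely the hard core of the lemma, and the fusion-plus-Menger-game sketch you offer for it is not an argument. A concrete technical symptom: you apply Lemma~\ref{bo_menger} to $\G$ \emph{inside} $V[x*H]$ and then try to pull the resulting $\delta$ into $V$ by $\omega^\omega$-bounding; but $2\delta\leq^* g_0$ gives no alignment between the interval partition $[2\delta(n),2\delta(n+1))$ and any ground-model partition, so the set $T$ is not controlled by any ground-model data and no density argument over $\IP_\F^*$ can speak about it.

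The paper's proof avoids the side-function problem entirely, and this is the idea missing from your proposal. Instead of asking which of $y_n$, $\w\setminus y_n$ lies in $\G$, it works with the \emph{agreement sets} $[x_j=x_{m_k}]$ of blocks of generic columns: a short dichotomy (either infinitely often a block agrees with its first column on a $\G$-set, or eventually consecutive columns agree on $\G$-sets because they both disagree with a fixed one) yields $A_k=\bigcup_{j\in[m_k,m_{k+1})}[x_j=x_{m_k}]\in\G$ with $(m_k)\in V$. These $A_k$ are symmetric in the two possible ``sides,'' so no choice of $c$ is needed. The Menger property of $\G^2$ is then applied to covers that are \emph{monotone} in the index $n$, so that the selector $f$ can legitimately be replaced by a dominating ground-model function and the function $h$ built from it lies in $V$; the parity trick with $h(2\delta(i))$ splits the diagonal cover of $\G^2$ into one cover of $\G$ and guarantees $A_\delta\in\G$ for \emph{every} increasing $\delta$. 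Finally, Lemma~\ref{bo_menger} is applied in $V$ to $\F$ (not to $\G$ in the extension) with the $B_i$ read off a condition $p$, and a single extension $q$ of $p$ forces the columns of the $i$th block to \emph{disagree} on $B_i\cap[h(2\delta(i)),h(2\delta(i+1)))$ (one column set to $0$ there, the rest to $1$), hence forces $B\cap A_\delta=\emptyset$ with $B\in\F\subset\G$ and $A_\delta\in\G$, contradicting $\G=\G^+$. Your setup (ultra-semifilter structure, bounding, Observation~\ref{obs_obv}) is fine as far as it goes, but without the agreement-set device the proof does not close.
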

\begin{proof}
Throughout the proof we shall identify $x$ with $\cup x:\w\times\w\to 2$.
Suppose to the contrary that such a $\G$ exists. Set
$x_j(n)=x(j,n)$. In $V[x*H]$, the following 2 cases are possible.

a). For every $m\in\w$ there exists $k>m$ such that
$\bigcup_{j\in[m,k)}[x_j=x_m] \in\G$. Then we can inductively
construct an increasing sequence $\la m_k:k\in\w\ra$ such that
\begin{equation} \label{eq_1}
\bigcup_{j\in[m_k, m_{k+1})}[x_j=x_{m_k}]\in\G \mbox{ \ for all \
}
k.
\end{equation}
 Since $\IP^*_\F*\IQ$ is $\w^\w$-bounding, we may additionally
assume that this sequence
is in $V$.

b). There exists $m$ such that $\bigcup_{j\in
[m,k)}[x_j=x_m]\in\sim\G$ for all $k>m$. This means that
$\bigcap_{j\in [m,k)}[x_j\neq x_m] \in \G$ for all $k>m$. Then
$$
[x_i= x_{i+1}]\supset [x_i\neq x_{m}] \cap [x_{i+1}\neq x_{m}]
\supset \bigcap_{j\in [m,i+2)} [x_j \neq x_m] \in \G
$$
for all $i>m$. Thus the sequence $m_k=m+1+2k$ satisfies
(\ref{eq_1}), and hence there always exists a sequence $\la
m_k:k\in\w\ra\in V$ satisfying (\ref{eq_1}).

Set $A_k=\bigcup_{j\in[m_k, m_{k+1})}[x_j=x_{m_k}]\in\G$ and
$\U_k=\{U^k_n: n\in\w\}$, where
$$U^k_n=\{\la X,Y \ra\in\mathcal P(\w)^2 : \forall i\leq k\: 
\big((X\cap A_i\cap [k,n)\neq\emptyset)\wedge
(Y\cap A_i\cap [k,n)\neq\emptyset)\big) \}.   $$ Since
$A_k\in\G=\G^+$ for all $k$, $\U_k$ is easily seen to be an open
cover of $\G^2$.  The Menger property of $\G^2$ yields a strictly
increasing $f \in\w^\w\cap V[x*H]$ such that
$\{U^k_{f(k)}:k\in\w\}$
covers $\G^2$. Since $\IP^*_\F*\IQ$ is $\w^\w$-bounding, we could
additionally assume that $f\in V$. Set $h(0)=f(0)+1$ and
$h(l+1)=f(h(l))+1$ for all $l$. Note that $U^k_n=(W^k_n)^2$,
where
$$ W^k_n=\{X \in\mathcal P(\w) : \forall i\leq k\:  (X\cap
A_i\cap [k,n)\neq\emptyset)\}.$$
Therefore there exists $\epsilon\in 2$ such that
$$O_\epsilon:=\bigcup\{ W^k_{f(k)}:
k\in\bigcup_{l\in\w}[h(2l+\epsilon),h(2l+\epsilon+1))
\}\supset\G:$$
If there were $X_\epsilon\in\G\setminus O_\epsilon$ for all
$\epsilon\in 2$, then $\la X_0, X_1 \ra$ could not be an  element
of
$U^k_{f(k)}$ for any $k$ thus contradicting the choice of $f$.
Without loss of generality $\epsilon=0$ is as above.

\begin{claim} \label{good_V}
Let $\delta\in\w^\w$ be strictly increasing. Then
$$ A_\delta:= \bigcup_{i\in\w} A_i\cap [h(2\delta(i)),
h(2\delta(i+1)))  \in \G. $$
\end{claim}
\begin{proof}
Given any $X\in\G$,  find $l\in\w$ and $k\in [h(2l),h(2l+1))$
such
that $X\in  W^k_{f(k)}$. Let $i\in\w$ be such that $l\in
[\delta(i),
\delta(i+1))$. Note that $i\leq l\leq k$,  hence $X\in 
W^k_{f(k)}$
implies $ X\cap A_i\cap [k,f(k))\neq\emptyset $. It follows that
$$ [k,f(k))\subset [h(2l), f(h(2l+1)))\subset[h(2l),
h(2l+2))\subset [h(2\delta(i)),h(2\delta(i+1))), $$
consequently $ X\cap A_i\cap
[h(2\delta(i)),h(2\delta(i+1)))\neq\emptyset $, which implies
$\bigcup_{i\in\w} A_i\cap [h(2\delta(i)), h(2\delta(i+1)))  \in
\G^+$.
\end{proof}

Let us fix any $p\in\IP^*_\F$ and set $B_i=\w\setminus
\supp(p_{m_{i+1}})\in\F^+$. By Lemma~\ref{bo_menger} used in $V$
there exists an increasing $\delta$ such that $
B:=\bigcup_{i\in\w}
B_i\cap [h(2\delta(i)), h(2\delta(i+1)))  \in \F^+=\F. $ For
every
$m\in\w$ find $i$ such that $m\in [m_i,m_{i+1})$ and set
$$q_m=p_m\cup \big(B_i\cap [h(2\delta(i)),
h(2\delta(i+1)))\times\{0\}\big) $$
if $m=m_i$ and
$$q_m=p_m\cup \big( B_i\cap [h(2\delta(i)),
h(2\delta(i+1)))\times\{1\}\big) $$
otherwise. This $q$ obviously forces (i.e., any condition in
$\IP^*_\F*\IQ$ whose first coordinate is $q$ forces) that
$B_i\cap\name{A_i} \cap [h(2\delta(i)),
h(2\delta(i+1)))=\emptyset $
for all $i$, and hence it also forces $B\cap
\name{A_\delta}=\emptyset$. Thus the set of those $q\in\IP^*_\F$
which force $B\cap \name{A_\delta}=\emptyset$ is dense, which
means
that $B\cap A_\delta=\emptyset$ (here
$A_\delta=\name{A_\delta}^{G*H}$). However,  $B\in\F\subset\G$ by
the choice of $\delta$ and $A_\delta\in\G$ by Claim~\ref{good_V},
and therefore $B\cap A_\delta=\emptyset$ contradicts $\G=\G^+$.
This
contradiction completes our proof.
\end{proof}

\noindent\textit{Proof of Theorem~\ref{main_men}.} \ Suppose that
there exists a topological group $G$ and a  compactification
$bG$,
such that $(bG\setminus G)^2$ is Menger but not $\sigma$-compact.
Then
 in the same way
as at the beginning of the proof of Theorem~\ref{main_hur} we
conclude that $G$ is feathered. By Lemma~\ref{feathered} we may
assume without loss of generality   that $G$ satisfies the
premises
of Lemma~\ref{towards_semifilter}. Applying this lemma for
$\mathsf
P$ being the property of having the Menger square we get a
semifilter $\F=\F^+$ such that $\F^2$ is Menger. Thus the theorem
will be proved as soon as we construct a model of ZFC in which
there
are no semifilters $\F=\F^+$ with Menger square.

To this end let us assume that GCH holds in $V$ and consider a
function $B:\w_2\to H(\w_2)$, the family of all sets whose
transitive closure has size $<\w_2$, such that for each $x\in
H(\w_2)$ the family $\{\alpha:B(\alpha)=x\}$ is
$\w_1$-stationary.
Let $\la \IP_\alpha,\name{\IQ}_\beta:\beta<\alpha\leq\w_2\ra$ be
the
following  iteration with at most countable supports: If
$B(\alpha)$
is a $\IP_\alpha$-name for $\IP^*_{\name{\F}}$ for some
semifilter
$\name{\F}$ such that $\forces_{\IP_\alpha}$
``$\name{\F}=\name{\F}^+$ and $\name{\F}^2$ is Menger'', then
$\name{\IQ}_\alpha=\IP_{\name{\F}}^*$. Otherwise we let
$\name{\IQ}_\alpha$ to be a $\IP_\alpha$-name for the trivial
forcing. Then $\IP_{\w_2}$ is $\w^\w$-bounding forcing notion
with
$\w_2$-c.c. being a countable support iteration of length $\w_2$
of
proper $\w^\w$-bounding posets of size $\w_1$  over a model of
CH.

Let $G$ be a $\IP_{\w_2}$-generic over $V$ and suppose that
$\F\in
V[G]$ is a semifilter such that $\F=\F^+$ and $\F^2$ is Menger.
Then
the set $\big\{\alpha:\F_\alpha:=(\F\cap V[G\cap\IP_\alpha])\in
V[G\cap\IP_\alpha], $ $\F_\alpha=\F^+_\alpha$  and $\F_\alpha^2$
is
Menger in $V[G\cap\IP_\alpha]  \big\}$ contains an $\w_1$-club
subset of $\w_2$, and hence for one of these $\alpha$ we have
that
$\name{\IQ}_\alpha=\IP^*_{\name{\F}_\alpha}$, where
$\name{\F}_\alpha$ is a $\IP_\alpha$-name such that
$\name{\F}_\alpha^{G\cap\IP_\alpha}=\F\cap V[G\cap\IP_\alpha]$.
Now,
a direct application of Lemmata~\ref{meng_forcing} and
\ref{w^w-bound} implies that $\F_\alpha\subset\F$ cannot be
enlarged
to any semifilter $\U\in V[G]$ such that $\U^2$ is Menger and
$\U^+=\U$, which contradicts our choice of $\F$. \hfill $\Box$
\medskip

Let us note that in the proof of Theorem~\ref{main_men} above
we have also proven the following 

\begin{theorem}
It is consistent with ZFC that there are no semifilters $\F$
such that $\F=\F^+$ and $\F^2$ is Menger.  
\end{theorem}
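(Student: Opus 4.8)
The plan is to show that the model of ZFC produced in the proof of Theorem~\ref{main_men} already witnesses the statement, so that no separate construction is needed: one simply runs the final part of that proof with the group $G$ deleted. Concretely, I would start from a ground model $V$ of GCH, fix a bookkeeping function $B:\w_2\to H(\w_2)$ such that $\{\alpha<\w_2:B(\alpha)=x\}$ is $\w_1$-stationary for every $x\in H(\w_2)$, and form the countable support iteration $\la\IP_\alpha,\name{\IQ}_\beta:\beta<\alpha\leq\w_2\ra$ in which $\name{\IQ}_\alpha=\IP^*_{\name{\F}}$ whenever $B(\alpha)$ is a $\IP_\alpha$-name for $\IP^*_{\name{\F}}$ for some semifilter $\name{\F}$ with $\forces_{\IP_\alpha}$``$\name{\F}=\name{\F}^+$ and $\name{\F}^2$ is Menger'', and $\name{\IQ}_\alpha$ is the trivial forcing otherwise. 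By Lemma~\ref{w^w-bound} every iterand is a proper, $\w^\w$-bounding poset of size at most $\w_1$ over a model of CH, so the standard preservation theorems for countable support iterations of proper $\w^\w$-bounding forcings (see, e.g., \cite{She_propimp}) give that $\IP_{\w_2}$ is proper, $\w^\w$-bounding, and $\w_2$-c.c.

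Next I would let $G$ be $\IP_{\w_2}$-generic over $V$ and argue by contradiction: suppose $\F\in V[G]$ is a semifilter with $\F=\F^+$ and $\F^2$ Menger. Using the $\w_2$-c.c.\ together with the fact that having Menger square and the identity $\F=\F^+$ both reflect down to intermediate extensions, the set
\[\{\alpha<\w_2: \F_\alpha:=\F\cap V[G\cap\IP_\alpha]\in V[G\cap\IP_\alpha],\ \F_\alpha=\F_\alpha^+,\ \F_\alpha^2 \text{ is Menger in } V[G\cap\IP_\alpha]\}\]
contains an $\w_1$-club in $\w_2$. By the stationarity property of $B$ I may pick such an $\alpha$ for which $B(\alpha)$ is a $\IP_\alpha$-name for $\IP^*_{\name{\F}_\alpha}$, where $\name{\F}_\alpha$ is a $\IP_\alpha$-name for $\F_\alpha$; then $\name{\IQ}_\alpha=\IP^*_{\name{\F}_\alpha}$.

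Finally I would factor the iteration as $\IP_\alpha*\IP^*_{\name{\F}_\alpha}*\name{\IP}_{\mathrm{tail}}$, note that the tail is $\w^\w$-bounding over $V[G\cap\IP_{\alpha+1}]$ by the same preservation theorem, and apply Lemma~\ref{meng_forcing} over $V[G\cap\IP_\alpha]$ with $\F=\F_\alpha$, $x$ the $\IP^*_{\name{\F}_\alpha}$-generic real, and $\IQ=\name{\IP}_{\mathrm{tail}}$. The conclusion of that lemma is that $V[G]$ contains no semifilter $\G=\G^+$ with $\G\supseteq\F_\alpha$ and $\G^2$ Menger, which contradicts the existence of $\F\supseteq\F_\alpha$ with exactly these properties. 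The main obstacle, exactly as in Theorem~\ref{main_men}, is the reflection step: one must verify that $\F_\alpha$ genuinely lies in $V[G\cap\IP_\alpha]$ and retains there both the closure property $\F_\alpha=\F_\alpha^+$ and a Menger square, and that the bookkeeping $B$ can be arranged to name $\IP^*_{\name{\F}_\alpha}$ at one of these reflection points; this is precisely the club-versus-stationary counting already carried out in that proof, after which the rest is a direct invocation of Lemmata~\ref{meng_forcing} and \ref{w^w-bound} and the usual iteration machinery.
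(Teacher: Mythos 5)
Your proposal is correct and is essentially identical to the paper's argument: the paper proves this theorem by observing that the model constructed in the proof of Theorem~\ref{main_men} (the GCH ground model, the $\w_1$-stationary bookkeeping $B$, the countable support iteration of the posets $\IP^*_{\name{\F}}$, the $\w_1$-club reflection of $\F=\F^+$ with Menger square to intermediate extensions, and the final appeal to Lemmata~\ref{meng_forcing} and \ref{w^w-bound}) already contains no semifilter $\F=\F^+$ with $\F^2$ Menger. You have simply re-run that construction with the topological-group wrapper removed, which is exactly what the paper intends.
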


\section{On a possible dichotomy for the Menger
property}\label{dichotomy_menger}

Our first attempt to find a counterpart of the Menger property is
based on its game characterization we have exploited in
Section~\ref{menger}. As the Menger game produces a strengthening
of
the Lindel\" of property, we should consider a game which
produces a
strengthening of the Baire property.

There is an obvious candidate for this purpose: the Banach-Mazur
game, see for instance \cite{Kec95} for more information. This
game
${\text {BM}}(X)$ is played on the space $X$ in $\omega$-many
innings between two players $\alpha $ and $\beta$ as follows.
$\beta$ makes the first  move by
 choosing  a non-empty open set $U_0$ and $\alpha $ responds by
taking a non-empty open set  $V_0\subseteq U_0$.   In general, at
the n-th  inning $\beta  $ chooses a non-empty open set
$U_n\subseteq V_{n-1}$ and $\alpha  $ responds by taking a
non-empty
open set $V_n\subseteq U_n$. The rule is that $\alpha $ wins if
and
only if  $\bigcap \{V_n:n<\omega\}\ne \emptyset $. The
relationship
of the Banach-Mazur game with  Baire spaces is given by the
following \cite[Theorem 8.11]{Kec95}.

\begin{theorem}
A space $X$ is Baire  if and only if player $\beta$ does not have
a
winning strategy in ${\text{BM}}(X)$.
\end{theorem}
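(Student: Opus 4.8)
The plan is to prove the two implications separately; the passage from a winning strategy of $\beta$ to a failure of the Baire property is the substantial half, and it is where the only real subtlety lies.

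First I would handle the direction ``$X$ is not Baire $\Rightarrow$ $\beta$ has a winning strategy''. Assuming $X$ is not Baire, fix a sequence $(D_n)_{n\in\w}$ of dense open subsets of $X$ and a non-empty open $W$ with $W\cap\bigcap_{n\in\w}D_n=\emptyset$. A winning strategy for $\beta$ consists in playing, at the initial move, some non-empty open $U_0\subseteq W\cap D_0$, and at the $n$-th inning some non-empty open $U_n\subseteq V_{n-1}\cap D_n$; these are legal moves because each $D_n$ is dense open and $V_{n-1}$ is non-empty open. Since the moves are nested we get $\bigcap_{n\in\w}V_n=\bigcap_{n\in\w}U_n\subseteq W\cap\bigcap_{n\in\w}D_n=\emptyset$, so $\beta$ wins every run of this form.

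For the converse I would assume $\sigma$ is a winning strategy for $\beta$ and manufacture dense open sets with empty intersection. Put $U_{\langle\rangle}:=\sigma(\langle\rangle)$; since open subspaces of Baire spaces are Baire, it is enough to show that the subspace $U_{\langle\rangle}$ is not Baire. I would recursively build a tree of finite runs of $\text{BM}(X)$ consistent with $\sigma$. At stage $n$ there will be a family $P_n$ of runs of length $2n+1$ (each ending with a $\beta$-move), whose terminal $\beta$-moves are pairwise disjoint with union dense in $U_{\langle\rangle}$; one starts with $P_0=\{\langle U_{\langle\rangle}\rangle\}$. The hard part is the step from $P_n$ to $P_{n+1}$: given a run $p\in P_n$ ending in a $\beta$-move $U$, it does not suffice to take a maximal pairwise disjoint family of $\alpha$-responses $V\subseteq U$ and then apply $\sigma$, because $\beta$'s reply $\sigma(p^\frown V)$ may be much smaller than $V$ and the replies need not be dense in $U$. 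The remedy is to choose a maximal pairwise disjoint family $\mathcal W_p$ of non-empty open sets \emph{of the form} $\sigma(p^\frown V)$ with $V\subseteq U$ non-empty open; this $\mathcal W_p$ is non-empty (take $V=U$) and $\bigcup\mathcal W_p$ is dense in $U$, for if a non-empty open $V_*\subseteq U$ missed it then $\sigma(p^\frown V_*)\subseteq V_*$ would too, contradicting maximality. Recording for each $W\in\mathcal W_p$ a witnessing extension $p^\frown V_W^\frown W$ produces $P_{n+1}$; disjointness of the new terminal $\beta$-moves follows from disjointness within each $\mathcal W_p$ and across the old (disjoint) terminal moves, and $O_n:=\bigcup\{U:U\text{ is a terminal }\beta\text{-move of some }p\in P_n\}$ is open and, using transitivity of ``dense in'', dense in $U_{\langle\rangle}$.

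Finally I would argue $\bigcap_{n\in\w}O_n=\emptyset$: a point $x$ in this intersection belongs, by disjointness, to a unique terminal $\beta$-move at each level, and these nest, so they assemble into an infinite run of $\text{BM}(X)$ consistent with $\sigma$ all of whose $\beta$-moves contain $x$; as $\sigma$ is winning for $\beta$, the intersection of the moves of that run is empty, contradicting $x\in\bigcap_{n\in\w}O_n$. Thus $(O_n)_{n\in\w}$ witnesses that $U_{\langle\rangle}$, hence $X$, is not Baire. The only genuinely delicate point is the choice of $\mathcal W_p$; the remaining verifications (legality of the moves, that open subspaces of Baire spaces are Baire, and transitivity of density) are routine.
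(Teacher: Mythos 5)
Your argument is correct and complete: the easy direction is handled properly, and the key step in the converse (taking a maximal pairwise disjoint family of sets of the form $\sigma(p^\frown V)$ rather than of the responses $V$ themselves, so that the union of $\beta$'s replies is dense) is exactly the right fix. The paper does not prove this theorem but cites \cite[Theorem 8.11]{Kec95}, and your proof is essentially the standard Banach--Mazur--Oxtoby argument given there.
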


Consequently,  if $\alpha $ has a winning strategy, then the
space
is  Baire.
\medskip

\noindent\textbf{Definition.}  A  space $X$ is \emph{weakly
$\alpha
$-favorable} if  player $\alpha $ has a winning strategy in the
Banach-Mazur game. $X$ is said to be \emph{$\alpha $-favorable} 
if
player $\alpha $ has a winning tactic, i. e. a winning strategy
depending only on the last move of $\beta$. \hfill $\Box$
\medskip

Every pseudocompact space is $\alpha $-favorable: player $\alpha
$
has an easy winning tactic by choosing for any    $U_n$ a
non-empty
open set $V_n$ such that $\overline {V_n}\subseteq U_n$.
 Of course,  every weakly $\alpha
$-favorable space is Baire. Moreover, the following observation
shows that being weakly $\alpha$-favorable often contradicts the
Menger property.

\begin{observation}\label{theorem12}
 No  nowhere locally compact  weakly $\alpha$-favorable
subset $X$ of the real line is Menger.
\end{observation}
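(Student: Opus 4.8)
The plan is to show that a nowhere locally compact, weakly $\alpha$-favorable $X\sbst\IR$ must contain a closed copy of $\w^\w$; since $\w^\w$ is not Menger (the covers $\{\{f:f(n)\le k\}:k\in\w\}$, $n\in\w$, admit no Menger selection) and the Menger property passes to closed subspaces, this contradicts $X$ being Menger. First I would record the two consequences of nowhere local compactness that are actually used. Put $Y:=\overline{X}$ (closure in $\IR$); then $D:=Y\sm X$ is dense in $Y$, and $Y$ has no isolated points. Indeed, if some nonempty open $W\sbst Y$ missed $D$, a small enough closed interval around a point of $W$ would meet $Y$ in a compact subset of $X$ with nonempty interior, contradicting nowhere local compactness; and an isolated point of $Y$ would be an isolated point of $X$, hence a one-point compact neighbourhood. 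Fix a winning strategy $\sigma$ for player $\alpha$ in the Banach--Mazur game on $X$.

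Next I would build, by recursion on the levels of $\w^{<\w}$, nonempty open subsets $U_s,V_s$ of $X$ ($s\in\w^{<\w}$) with the following properties: (i) along every branch $f\in\w^\w$ the sequence $(U_{f\uhr n},V_{f\uhr n})_{n\in\w}$ is a legal run of the Banach--Mazur game on $X$ in which $\alpha$ follows $\sigma$ (so in particular $\bigcap_n V_{f\uhr n}\ne\emptyset$); (ii) $\mathrm{diam}(U_s)\to0$ as $|s|\to\infty$; (iii) $\overline{V_{s\frown i}}^{\,Y}\sbst V_s$ for all $s,i$, and $\overline{V_t}^{\,Y}\cap\overline{V_{t'}}^{\,Y}=\emptyset$ whenever $|t|=|t'|$ and $t\ne t'$; (iv) for every $s$ there is $d_s\in D$ such that the sets $\overline{V_{s\frown i}}^{\,Y}$ converge to $d_s$ as $i\to\infty$. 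Clauses (i)--(iii) are the usual ingredients of a Lusin scheme, with $V_s$ arising as $\alpha$'s $\sigma$-response to $\beta$'s move $U_s$; the new clause (iv) is where density of $D$ in the perfect space $Y$ is used. Concretely, having produced $V_s=W_s\cap X$ with $W_s\sbst Y$ open, one picks $d_s\in W_s\cap D$, points $x_i\in W_s\cap X$ with $x_i\to d_s$, and radii $r_i\to0$ so fast that the closed balls $\overline{B_Y(x_i,r_i)}$ are pairwise disjoint, contained in $W_s$, and converge to $d_s$; then one sets $U_{s\frown i}:=B_Y(x_i,r_i)\cap X$ (nonempty since $X$ is dense) and $V_{s\frown i}:=\sigma(\text{history},U_{s\frown i})$.

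Now define $e:\w^\w\to X$ by letting $e(f)$ be the unique point of $\bigcap_n\overline{V_{f\uhr n}}^{\,Y}$: it is a single point by (ii) and (iii), and it lies in $X$ because by (i) it must coincide with the (then unique) point of $\bigcap_n V_{f\uhr n}\ne\emptyset$. Conditions (ii) and (iii) give in the routine way that $e$ is injective and continuous; continuity of $e^{-1}$, and closedness of $e[\w^\w]$ in $X$, both come from (iv). For closedness it suffices to check $\overline{e[\w^\w]}^{\,Y}\cap X=e[\w^\w]$. Fix $y\in\overline{e[\w^\w]}^{\,Y}$; by the separation half of (iii) the set $\{s:y\in\overline{V_s}^{\,Y}\}$ is a chain, and it has an infinite branch only if $y\in e[\w^\w]$. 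So assume $y\notin e[\w^\w]$, let $s^{*}$ be the largest element of that chain, and take $f_k$ with $e(f_k)\to y$. Let $\ell_k$ be the least level at which $f_k$ leaves the chain; passing to a subsequence on which $\ell_k$ is constant, the corresponding coordinate of $f_k$ must tend to infinity — otherwise $y$ would lie in $\overline{V_t}^{\,Y}$ for a node $t$ strictly extending the chain past $s^{*}$, or in two disjoint same-level closures. By (iv) this forces $y=d_{s^{*}}$ or $y=d_r$ for some $r\subsetneq s^{*}$, hence $y\in D$, i.e. $y\notin X$. Thus $e$ is a closed embedding, $X$ contains a closed copy of $\w^\w$, and therefore $X$ is not Menger. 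The main obstacle is precisely this closedness of the embedding, which clause (iv) — built from the density of the remainder $Y\sm X$ — is designed to overcome; without nowhere local compactness (iv) cannot be arranged, and indeed the conclusion fails for $\IR$ itself.
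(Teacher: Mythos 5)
Your proof is correct, but it follows a genuinely different route from the paper's. Both arguments reduce the Observation to producing a copy of $\w^\w$ that is \emph{closed in} $X$ (after which non-Mengerness of $\w^\w$ and heredity of the Menger property to closed subspaces finish the job), but they obtain that copy differently. The paper first uses nowhere local compactness to place $X$ inside $\mathbb R\sm\mathbb Q\cong\w^\w$ (a countable dense set of $\mathbb R$ can be chosen disjoint from $X$), then invokes two descriptive-set-theoretic black boxes: Kechris' characterization of weak $\alpha$-favorability (Theorem~8.17(1)) to get a dense $G_\delta$ subset $Y$ of $\w^\w$ with $Y\sbst X$, and the Hurewicz-type theorem (together with the Baire category theorem) to find inside $Y$ a closed-in-$\w^\w$ copy of $\w^\w$. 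You instead build the closed copy by hand: a Lusin scheme whose $V$-nodes are produced by a winning Banach--Mazur strategy (so every branch has nonempty intersection, landing the limit point in $X$), with the additional clause that the children at each node accumulate to a point of $\overline{X}\sm X$ --- this is exactly where nowhere local compactness enters, and it is what makes the image closed in $X$ rather than merely embedded. Your version is self-contained and makes the role of the hypothesis more transparent; the paper's is shorter at the price of quoting two nontrivial classical results. One small slip: clause $(iii)$ as literally written, $\overline{V_{s^\frown i}}^{\,Y}\sbst V_s$, cannot be arranged, since the left-hand side generally contains points of $\overline{X}\sm X$ while $V_s\sbst X$; what you actually construct (and all you use) is $\overline{V_{s^\frown i}}^{\,Y}\sbst W_s$ for the open $W_s\sbst\overline{X}$ with $V_s=W_s\cap X$, together with $V_{s^\frown i}\sbst V_s$, and with that reading the argument is sound.
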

\begin{proof}
Since $X$ is nowhere locally compact, we may assume that
$X\subset\mathbb R\setminus\mathbb Q$, and the  latter we shall
identify with $\w^\w$. By \cite[Theorem~8.17(1)]{Kec95} $X\supset
Y$
for some dense $G_\delta$ subset $Y$ of $\w^\w$. By the Baire
category theorem $Y$ cannot be contained in a $\sigma$-compact
subspace of $\w^\w$, and hence it contains a copy $Z$ of $\w^\w$
which is closed in $\w^\w$ according to \cite[Corollary
21.23]{Kec95}. Therefore $Z$ is a closed in $X$ copy of $\w^\w$,
which implies that $X$ is not Menger as the Menger property is
inherited by closed subspaces.
\end{proof}

 Therefore, weak  $\alpha $-favorability   seems to be 
 a good candidate to be the counterpart of the Menger property.
However, this is not the case by Theorem~\ref{theorem11} below.
Let
us recall that  a set $S\subset \mathbb R$ is a Bernstein set
provided that  both $S$ and $\mathbb R\setminus S$ meet every
closed
uncountable subset of $\mathbb R$. The next two lemmas seem to be known,
but we were not able to find them in the literature. That is why we 
present their proofs. 

\begin{lemma}\label{lemma8}    There is a subgroup $G$   of the
real
line $\mathbb  R$ containing the rationals which is a Bernstein set.
\end{lemma}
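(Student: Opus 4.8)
The plan is to build $G$ by a transfinite recursion of length $\mathfrak c$, constructing simultaneously a set $Y\subseteq\mathbb R\setminus G$ that certifies the complement of $G$ is large. First I would observe that $\mathbb R$ (being second countable) has exactly $\mathfrak c$ closed subsets, hence at most $\mathfrak c$ perfect subsets, and clearly at least that many; fix an enumeration $\{P_\alpha:\alpha<\mathfrak c\}$ of all perfect subsets of $\mathbb R$. Since every uncountable closed subset of $\mathbb R$ contains a nonempty perfect set (Cantor--Bendixson), it suffices to arrange that both $G$ and $\mathbb R\setminus G$ meet every $P_\alpha$.

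Next I would recursively produce an increasing chain $(G_\alpha)_{\alpha<\mathfrak c}$ of subgroups of $\mathbb R$ together with sets $(Y_\alpha)_{\alpha<\mathfrak c}$, with $G_0=\mathbb Q$, $Y_0=\emptyset$, unions taken at limits, maintaining the invariants $|G_\alpha|+|Y_\alpha|<\mathfrak c$ and $G_\alpha\cap Y_\alpha=\emptyset$. At stage $\alpha$ I would first pick $y_\alpha\in P_\alpha\setminus G_\alpha$ (possible since $|P_\alpha|=\mathfrak c>|G_\alpha|$) and set $Y_{\alpha+1}=Y_\alpha\cup\{y_\alpha\}$, noting that $G_\alpha\cap Y_{\alpha+1}=\emptyset$. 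Then I would pick $x_\alpha\in P_\alpha$ avoiding the forbidden set $\{(y-g)/n:y\in Y_{\alpha+1},\,g\in G_\alpha,\,n\in\mathbb Z\setminus\{0\}\}$, which has cardinality $<\mathfrak c$, and let $G_{\alpha+1}=\{g+nx_\alpha:g\in G_\alpha,\,n\in\mathbb Z\}$, i.e.\ the subgroup of the abelian group $\mathbb R$ generated by $G_\alpha\cup\{x_\alpha\}$. The choice of $x_\alpha$ guarantees $G_{\alpha+1}\cap Y_{\alpha+1}=\emptyset$ (the case $n=0$ being covered by $G_\alpha\cap Y_{\alpha+1}=\emptyset$, the case $n\neq 0$ by the exclusion of $(y-g)/n$), and $|G_{\alpha+1}|=|G_\alpha|+\aleph_0<\mathfrak c$, so both invariants persist; at limit stages they persist because the chains are increasing.

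Finally, with $G=\bigcup_{\alpha<\mathfrak c}G_\alpha$ and $Y=\bigcup_{\alpha<\mathfrak c}Y_\alpha$, I would conclude: $G$ is a subgroup of $\mathbb R$ containing $\mathbb Q=G_0$; $G\cap Y=\emptyset$, since a common point would lie in $G_\beta\cap Y_\beta$ for some $\beta<\mathfrak c$, contradicting the invariant; and for every $\alpha$ we have $x_\alpha\in P_\alpha\cap G$ and $y_\alpha\in P_\alpha\cap Y\subseteq P_\alpha\cap(\mathbb R\setminus G)$. Thus $G$ and $\mathbb R\setminus G$ meet every perfect set, hence (via Cantor--Bendixson) every uncountable closed subset of $\mathbb R$, so $G$ is a Bernstein set.

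The one point that requires real care — the main obstacle — is the bookkeeping that prevents adjoining $x_\alpha$ from ever absorbing into the group a point reserved for $Y$: this is exactly what excluding the set $\{(y-g)/n\}$ achieves, and it works only because at each stage both the group and the reservoir $Y$ are kept of size $<\mathfrak c$ while each $P_\alpha$ has full cardinality $\mathfrak c$. Everything else (that the generated subgroup has the displayed form, that cardinalities stay below $\mathfrak c$ through limit stages, and that uncountable closed subsets of $\mathbb R$ contain perfect sets) is routine.
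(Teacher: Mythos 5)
Your proof is correct and follows essentially the same strategy as the paper's: a transfinite recursion of length $\mathfrak c$ over the uncountable closed (equivalently, perfect) sets, at each stage adjoining one point to the group and reserving one point for the complement, with the reserved points protected by choosing the new generator outside a small set of ``dangerous'' quotients. The only cosmetic difference is that you work with $\mathbb Z$-generated subgroups and exclude the set $\{(y-g)/n\}$ explicitly, whereas the paper treats $\mathbb R$ as a $\mathbb Q$-vector space and excludes the $\mathbb Q$-span of the current group together with the reserved points --- the same mechanism in different clothing.
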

\begin{proof}
Let $\{C_\alpha :\alpha <\mathfrak c\}$ be the collection of all
closed uncountable subsets of $\mathbb R$.
 Here, we will consider $\mathbb R$
as a $\mathbb Q$-vector space. Choose a point $x_0\in C_0$ and
denote by $G_0$ the vector subspace   of $\mathbb R$ generated 
by
$\{1, x_0\}$. Obviously, we have $|G_0|=\w$. Then,   pick a point
$
y_0\in C_0\setminus G_0$. We proceed by transfinite induction, by
assuming to have already constructed a non decreasing family of
vector subspaces $\{G_\beta:\beta<\alpha  \}$ of $\mathbb R$
satisfying $|G_\beta|\le |\beta|+ \w$ for each $\beta$ and points
$x_\beta, y_\beta\in C_\beta$ in such a way that $x_\beta\in
G_\beta$ and $\{ y_\beta: \beta<\alpha \}\cap
\bigcup\{G_\beta:\beta<\alpha \}=\emptyset$. The set $H_\alpha
=\bigcup\{G_\beta:\beta<\alpha \}$ has cardinality not exceeding
$|\alpha |+\w$ and  therefore even the vector subspace $K_\alpha$
generated by the set $H_\alpha \cup \{y_\beta:\beta<\alpha \}$
has
cardinality  less than $\mathfrak c$. So we may pick a point
$x_\alpha
\in C_\alpha \setminus K_\alpha $. Then, let $G_\alpha $ be the
vector subspace generated by $H_\alpha \cup \{x_\alpha \}$ and
finally pick a point $y_\alpha \in C_\alpha \setminus G_\alpha $.
It
is clear that $|G_\alpha |\le |\alpha |+\w$.  To complete the
induction, we need to show $y_\beta\notin G_\alpha $ for each
$\beta<\alpha $.  Indeed, if we had $y_\beta\in G_\alpha $ for
some
$\beta$, then $y_\beta=z+qx_\alpha $, where  $z\in H_\alpha $ and
$q\in \mathbb Q\setminus \{0\}$. But, this would imply $x_\alpha
=q^{-1}z-q^{-1}z\in K_\alpha $, in contrast with the way
$x_\alpha $
was chosen.

Now, we let $G=\bigcup\{G_\alpha :\alpha <\mathfrak c\}$. It is
clear
that $G$  is a $\mathbb Q$-vector subspace,  and hence a 
subgroup, 
of $\mathbb R$ which is also a Bernstein set.
\end{proof}

\begin{lemma} \label{lemma9}
A Bernstein    set $X\subset \omega^\omega$ does not have the
Menger
property.
\end{lemma}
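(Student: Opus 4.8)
The plan is to show that a Bernstein set $X\subset\w^\w$ contains a closed (in $\w^\w$) copy of $\w^\w$, since the Menger property is inherited by closed subspaces and $\w^\w$ is not Menger (being not $\sigma$-compact, or directly because the identity cover sequence cannot be handled). So it suffices to find such a closed copy inside $X$. First I would recall the standard fact that a subset of $\w^\w$ contains a closed copy of $\w^\w$ if and only if it is not contained in a $\sigma$-compact set, equivalently it is not $K_\sigma$-bounded; this is essentially \cite[Corollary~21.23]{Kec95}, already invoked in the proof of Observation~\ref{theorem12}. Hence the real task is to verify that a Bernstein set $X$ is not contained in any $\sigma$-compact subset of $\w^\w$.

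The key step is the following: a $\sigma$-compact subset of $\w^\w$ is a countable union of compact sets, each of which is nowhere dense in $\w^\w$ (compact subsets of $\w^\w$ have empty interior since $\w^\w$ is not locally compact), so a $\sigma$-compact set is meager. But a Bernstein set cannot be meager: if $X$ were meager, it would be contained in an $F_\sigma$ meager set $F$, and then the complement $\w^\w\sm F$ would be a dense $G_\delta$, hence would contain a closed uncountable set (e.g. a copy of the Cantor set, or simply by the fact that an uncountable Polish space's dense $G_\delta$ contains a Cantor set); this closed uncountable set would be disjoint from $X$, contradicting that $X$ meets every closed uncountable subset. Actually one should be slightly careful here: we are working in $\w^\w\cong\IR\sm\IQ$, not in $\IR$, but a closed uncountable subset of $\w^\w$ is also closed in $\IR$ after taking the appropriate identification, or one can just run the Bernstein argument internally in the Polish space $\w^\w$ — the notion of Bernstein set makes sense in any perfect Polish space, and the paper's Bernstein sets in $\IR$ restrict correctly once one passes through the homeomorphism $\IR\sm\IQ\cong\w^\w$ used in Observation~\ref{theorem12} and implicitly here. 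So I would phrase the argument entirely inside $\w^\w$: $X$ meets every uncountable closed subset of $\w^\w$, hence $X$ is not meager in $\w^\w$, hence $X$ is not contained in any $\sigma$-compact subspace of $\w^\w$, hence by \cite[Corollary~21.23]{Kec95} $X$ contains a closed-in-$\w^\w$ copy of $\w^\w$, and therefore $X$ is not Menger.

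The main obstacle, such as it is, is purely bookkeeping: making sure the transition between "$X$ is a Bernstein subset of $\IR$" (the form in which Lemma~\ref{lemma8} produces it) and "$X\cap\w^\w$, viewed inside $\w^\w$, meets every uncountable closed subset of $\w^\w$" is justified. Every uncountable closed subset $C$ of $\w^\w$, when $\w^\w$ is identified with the irrationals, is a subset of $\IR$ whose closure in $\IR$ is obtained by adding at most countably many rationals, so $\overline{C}^{\,\IR}$ is an uncountable closed subset of $\IR$ which $X$ must meet; since $X\subset\w^\w$ and the added points are rational, $X$ in fact meets $C$ itself. That resolves the only delicate point, and the rest is the chain of implications above. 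I would keep the proof short, citing \cite[Corollary~21.23]{Kec95} for the closed copy of $\w^\w$ and invoking the Baire category theorem plus the perfectness of $\w^\w$ for the non-meagerness of Bernstein sets.

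\begin{proof}
Identify $\w^\w$ with the space $\IR\sm\IQ$ of irrationals as in the proof of Observation~\ref{theorem12}. Suppose towards a contradiction that $X$ is Menger. We first claim that $X$ is not contained in any $\sigma$-compact subspace of $\w^\w$. Indeed, every compact subset of $\w^\w$ is nowhere dense, since $\w^\w$ is nowhere locally compact, so every $\sigma$-compact subspace of $\w^\w$ is meager. On the other hand, $X$ cannot be meager: if $X$ were contained in a meager $F_\sigma$ set $F\subset\w^\w$, then $\w^\w\sm F$ would be a dense $G_\delta$ subset of $\w^\w$, hence, being an uncountable Polish space without isolated points once intersected with a suitable closed copy of a perfect set, it would contain an uncountable closed subset $C$ of $\w^\w$. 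The closure $\overline{C}$ of $C$ in $\IR$ differs from $C$ by at most countably many rational points, so $\overline{C}$ is an uncountable closed subset of $\IR$ disjoint from $X$ modulo rationals; since $X\sbst\w^\w$ consists of irrationals, $X\cap\overline{C}=X\cap C=\emptyset$, contradicting that $X$ is a Bernstein set. Thus $X$ is non-meager and hence not contained in any $\sigma$-compact subspace of $\w^\w$.

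By \cite[Corollary~21.23]{Kec95}, a subset of $\w^\w$ which is not contained in a $\sigma$-compact set contains a closed-in-$\w^\w$ copy $Z$ of $\w^\w$. Then $Z$ is a closed subspace of $X$ homeomorphic to $\w^\w$. Since $\w^\w$ is not $\sigma$-compact and hence not Menger, and the Menger property is inherited by closed subspaces, $X$ is not Menger. This contradiction completes the proof.
\end{proof}
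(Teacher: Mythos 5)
Your argument breaks down at the decisive step: the application of \cite[Corollary~21.23]{Kec95} to the Bernstein set $X$. That corollary (and the Hurewicz dichotomy behind it) asserts that a \emph{Polish} space --- equivalently a $G_\delta$ subset of $\omega^\omega$, which is how the paper uses it in Observation~\ref{theorem12}, where it is applied to a dense $G_\delta$ set $Y$ --- is either $\sigma$-compact or contains a closed-in-$\omega^\omega$ copy of $\omega^\omega$. It does not apply to arbitrary subsets of $\omega^\omega$, and a Bernstein set is the canonical witness that it cannot: since the complement of $X$ also meets every uncountable closed set, and every uncountable closed subset of $\omega^\omega$ contains a compact perfect set, $X$ contains \emph{no} uncountable closed subset of $\omega^\omega$ whatsoever --- in particular no closed copy of $\omega^\omega$. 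So the set $Z$ you produce does not exist, and the non-meagerness of $X$ (which you do prove correctly) cannot be converted into non-Mengerness along this route; ``not contained in a $\sigma$-compact set'' does not imply ``not Menger'' for arbitrary sets.

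The paper's proof goes through a different, and here unavoidable, intermediate property: \emph{domination} rather than containment of a closed copy of $\omega^\omega$. For any $f\in\omega^\omega$ the set $\prod_{n}\{f(n)+1,f(n)+2\}$ is an uncountable compact (hence closed) subset of $\omega^\omega$, so the Bernstein set $X$ meets it, producing $g\in X$ with $g(n)>f(n)$ for all $n$; thus $X$ is dominating, and a dominating subset of $\omega^\omega$ is never Menger by the standard argument with the covers $\mathcal U_n=\{\pi_n^{-1}(k)\cap X:k\in\omega\}$. If you want to repair your write-up, replace the appeal to Corollary~21.23 by this domination argument; your bookkeeping about passing between closed subsets of $\mathbb R$ and of the irrationals remains useful but is not the issue.
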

\begin{proof}
 For any
$f\in \omega^\omega$ there exists some $g\in X$ such that
$f(n)<g(n)$ for each $n\in \omega$. This comes from the fact that
$X$ must meet the Cantor set  $\prod _{n<\omega} \{
f(n)+1,f(n)+2\}$. To finish, recall that a dominating subset of
$\omega^\omega$ is never Menger. Indeed, for any $n<\omega$ let
$\pi_n:\omega^\omega\to \omega$ be the projection onto the n-th
factor and put
 $\mathcal U_n=\{\pi_n^{-1}(k)\cap X :k\in \omega\}$. Each
$\mathcal U_n$
is an open cover of $X$. For any choice  of a finite set
$\mathcal
V_n\subseteq \mathcal U_n$, we may define a function $g: \omega
\to
\omega $ by letting $g(n)=\max\pi_n(\bigcup  \mathcal V_n)$, if
$\mathcal V_n\ne\emptyset$, and $g(n)=0$ otherwise. Since $X$ is
dominating, there is some $f\in X$ such that $g(n)<f(n)$ for each
$n$. Clearly, $ f\notin \bigcup\{\bigcup \mathcal V_n:n<\omega\}$
and so $X$ is not Menger.
\end{proof}

\begin{lemma}\label{lemma10} A Bernstein set $X\subseteq \mathbb
R$  is
not weakly $\alpha $-favorable.
\end{lemma}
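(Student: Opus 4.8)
The plan is to derive a contradiction from a winning strategy of $\alpha$ by using it to build a Cantor scheme inside $X$, producing an uncountable compact subset of $X$ --- which a Bernstein set cannot contain.

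First I would record the elementary facts about Bernstein sets we need: such an $X$ is dense in $\mathbb R$ (its complement $\mathbb R\setminus X$ is Bernstein too, and neither set can miss a nonempty open interval), and $X$ contains no uncountable subset $C$ which is closed in $\mathbb R$, since otherwise $C$ would be an uncountable closed set disjoint from $\mathbb R\setminus X$, contradicting that $\mathbb R\setminus X$ is Bernstein.

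Now suppose towards a contradiction that $\alpha$ has a winning strategy $\sigma$ in $\mathrm{BM}(X)$. Fixing a compatible metric on $\mathbb R$, I would construct by recursion on $s\in 2^{<\w}$ legal positions $p_s$ of the Banach--Mazur game in which $\alpha$ has followed $\sigma$, ending with $\alpha$'s move $V_s=W_s\cap X$ for some open $W_s\sbst\mathbb R$, such that $p_{s\,\vid\, i}$ end-extends $p_s$ for $i\in\{0,1\}$, $\diam(W_s)<2^{-|s|}$, and $\cl(W_{s\,\vid\, 0})\cap\cl(W_{s\,\vid\, 1})=\emptyset$. The recursive step is the only place where one does something: given $p_s$ ending with $V_s=W_s\cap X$, choose two disjoint nonempty open subintervals $I_0,I_1$ of $W_s$ with disjoint closures and of diameter $<2^{-|s|-1}$; since $X$ is dense in $\mathbb R$, the sets $U_{s\,\vid\, i}:=I_i\cap X$ are legitimate moves for $\beta$ inside $V_s$, and we let $\alpha$ answer according to $\sigma$, obtaining $V_{s\,\vid\, i}\sbst U_{s\,\vid\, i}$ and taking $W_{s\,\vid\, i}\sbst I_i$ open in $\mathbb R$ with $V_{s\,\vid\, i}=W_{s\,\vid\, i}\cap X$.

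Finally, for each $z\in 2^\w$ the chain $(p_{z\uhr n})_{n\in\w}$ unions to a run of the game in which $\alpha$ has followed $\sigma$; since $\sigma$ is winning, $\bigcap_n V_{z\uhr n}\neq\emptyset$, and by the diameter condition this intersection is a single point $x_z\in X$. The map $z\mapsto x_z$ is injective (once $z,z'$ split at level $n$, the points $x_z$ and $x_{z'}$ lie in the disjoint closed sets $\cl(W_{(z\uhr n)\,\vid\, 0})$ and $\cl(W_{(z\uhr n)\,\vid\, 1})$) and continuous (the diameters shrink), so $K:=\{x_z:z\in 2^\w\}$ is a continuous injective image of the compact space $2^\w$, hence a compact --- therefore closed in $\mathbb R$ --- set of cardinality $\mathfrak c$. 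But $K\sbst X$, contradicting the first paragraph; hence $X$ is not weakly $\alpha$-favorable. I expect the only real point to watch is the bookkeeping of the ambient open sets $W_s$, ensuring that $K$ comes out genuinely closed in $\mathbb R$ and uncountable; the rest is the classical observation that a winning strategy for $\alpha$ in the Banach--Mazur game allows one to embed the Cantor set.
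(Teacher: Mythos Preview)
Your argument is correct. The Cantor-scheme construction from a winning strategy for $\alpha$ is sound: density of $X$ guarantees that $\beta$'s split moves $U_{s\,\vid\, i}$ are nonempty, the bookkeeping with the ambient open sets $W_s\subset\mathbb R$ ensures disjoint closures and shrinking diameters, and the winning condition for $\sigma$ pins down a single $x_z\in X$ along each branch. The resulting $K$ is then an uncountable compact subset of $X$, contradicting the Bernstein property.

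The paper proceeds differently: it simply quotes \cite[Theorem~8.17(1)]{Kec95}, which says that a subset of a Polish space in which $\alpha$ has a winning strategy is comeager, and then observes that no Bernstein set is comeager (since a comeager set contains a dense $G_\delta$ and hence a copy of the Cantor set). Your approach is more hands-on: rather than passing through the comeager characterization, you extract a Cantor set directly from the strategy. In effect you are reproving, in the special case needed, the nontrivial direction of the cited theorem. The payoff is a fully self-contained argument with no black boxes; the cost is a page of bookkeeping where the paper gets away with two sentences.
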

\begin{proof}
By \cite[Theorem~8.17(1)]{Kec95} any weakly $\alpha$-favorable
subspace of $\mathbb R$ is comeager, while no Bernstein set can
be
comeager because any comeager subspace of $\mathbb R$ contains
homeomorphic copies of the Cantor set.
\end{proof}

These three lemmas   imply:

\begin{theorem}\label{theorem11}
There exists a topological group $G$ and its compactification $bG$
such that the remainder $bG\setminus G$ is neither Menger nor
weakly $\alpha $-favorable.
\end{theorem}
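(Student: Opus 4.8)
The plan is to combine the three preceding lemmas with the trick already used to produce a Scheepers remainder in the proof of Theorem~\ref{main_sch}. First I would invoke Lemma~\ref{lemma8} to fix a subgroup $G\sbst\IR$ which contains $\IQ$ and is a Bernstein set. The point of having $\IQ\sbst G$ is that it guarantees $G$ is dense in $\IR$, so $G$ is not locally compact (indeed it is nowhere locally compact, being a dense proper subgroup of $\IR$), and hence $G$ is not itself $\sigma$-compact — a Bernstein set has no uncountable closed-in-$\IR$ subset, in particular no nontrivial compact subset of positive size, so it cannot be a countable union of compacta.

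Next I would pass to a remainder. Take $bG=\beta G$, or really any metrizable compactification: embed $G$ densely into $[0,1]$ (or into $\IS^1$ to keep things a topological group) and let $bG$ be that closure, so that $X:=bG\sm G$ is a remainder. Since $G$ is dense and co-dense in $bG$, $X$ is a co-dense subspace of a metrizable compactum. The key observation is that $X$ is again (homeomorphic to) a Bernstein-type set: more precisely, because $G$ meets every uncountable closed subset of $\IR$ and so does $\IR\sm G$, the remainder $X=bG\sm G$ meets every uncountable closed subset of $bG$, and its complement $G$ does too; thus $X$ is a Bernstein set in the metrizable compactum $bG$. Alternatively, and more robustly, I can avoid re-proving a Bernstein property for $X$ and instead argue directly: the properties ``Menger'' and ``weakly $\alpha$-favorable'' are governed by what happens inside copies of $\w^\w$ and of the Cantor set sitting in the space, and these are transported by perfect maps. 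Concretely, by the remarks at the start of Section~\ref{scheepers}, the Menger property is preserved by perfect maps in both directions, so if $X$ were Menger then so would be $G$ (a perfect preimage consideration via $\beta G$, or directly since $G$ and $X$ have homeomorphic-enough remainders), contradicting Lemma~\ref{lemma9} applied to $G$ viewed inside $\w^\w$ — here one uses that $G$ is nowhere locally compact to reduce to the case $G\sbst\IR\sm\IQ\cong\w^\w$, exactly as in the proof of Observation~\ref{theorem12}. The same nowhere-local-compactness reduction together with Lemma~\ref{lemma10} shows $G$, hence $X$, is not weakly $\alpha$-favorable: a weakly $\alpha$-favorable subspace of $\IR$ is comeager by \cite[Theorem~8.17(1)]{Kec95}, but $X$ being Bernstein in $bG$ (or $G$ being Bernstein in $\IR$) precludes comeagerness, since a comeager set in a perfect Polish space contains a Cantor set.

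The cleanest route, which I would actually write up, is this: fix $G$ as in Lemma~\ref{lemma8} and put $bG=\beta G$. Then $bG\sm G$ is a remainder of $G$; I claim it is neither Menger nor weakly $\alpha$-favorable. For the Menger part: if $bG\sm G$ were Menger, then since the Menger property passes back and forth along perfect maps and $G$ is a perfect preimage of itself under the identity while $bG\sm G$ is a perfect image/preimage along the relevant maps — more honestly, since $G$ is nowhere locally compact it is of the first category in itself is not the point; rather I should note that $bG\sm G$ contains a closed copy of $G$-like structure — hmm, this is exactly where care is needed. The clean statement is: $bG\sm G$ is Menger $\iff$ $G$ is Hurewicz-from-the-other-side is false in general, so instead I use that any remainder of $G$ has the Menger property iff every remainder does (beginning of Section~\ref{scheepers}), and I exhibit one remainder, sitting inside a metrizable compactification, which is a Bernstein subset of that compactum minus a Cantor set — and invoke Lemmas~\ref{lemma9} and \ref{lemma10} after the standard reduction to $\w^\w$.

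The main obstacle, and the step I expect to need the most care, is the transfer of the Bernstein property from $G\sbst\IR$ to the remainder $X=bG\sm G$ and then the reduction of $X$ to a nowhere-locally-compact subset of $\w^\w$ so that Lemmas~\ref{lemma9} and~\ref{lemma10} literally apply. The safest formulation is to choose the compactification concretely — densely embed the group $G$ into the circle $\IT$ (so $bG=\IT$ and $X=\IT\sm G$) — observe that $X$ is then a Bernstein subset of $\IT$ (because $G$ and $\IR\sm G$, hence $G$ and $\IT\sm G$, each meet every uncountable closed set), note $X$ is nowhere locally compact since $G$ is dense, remove a point to land in $\IR$ and then, away from the rationals, in $\w^\w$, and finally apply Lemma~\ref{lemma9} (no Bernstein set in $\w^\w$ is Menger, and Menger passes to closed subspaces and is invariant under removing a point from a locally compact piece) and Lemma~\ref{lemma10} (no Bernstein set in $\IR$ is weakly $\alpha$-favorable). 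Everything else is routine bookkeeping with perfect maps already spelled out at the start of Section~\ref{scheepers}.
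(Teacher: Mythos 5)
Your final formulation is exactly the paper's proof: take the Bernstein subgroup $G\supseteq\mathbb Q$ of $\mathbb R$ from Lemma~\ref{lemma8}, compactify $\mathbb R$ by adding endpoints (the paper uses two endpoints rather than the circle, an immaterial difference), and observe that the remainder is essentially the Bernstein set $\mathbb R\setminus G\subseteq\omega^\omega$, to which Lemmas~\ref{lemma9} and~\ref{lemma10} apply. The detour through perfect maps and transferring Menger from the remainder back to $G$ itself in your middle paragraphs is both incorrect and unnecessary — you rightly discard it — since the complement of a Bernstein set is again Bernstein, so the lemmas apply directly to the remainder.
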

\begin{proof} Recall that the set of irrationals $\mathbb
R\setminus \mathbb Q$
is homeomorphic to $\omega^\omega$. Let $G$ be such as  in
Lemma~\ref{lemma8}. By Lemma~\ref{lemma9}  $\mathbb R\setminus
G\subseteq \omega^\omega$ is not Menger, and by
Lemma~\ref{lemma10}
 $\mathbb R\setminus G$ is not weakly $\alpha $-favorable. Now,
it
suffices to take as $bG$   the compactification of $\mathbb R$
obtained by adding two end-points.
\end{proof}

Theorem~\ref{theorem11} implies that  the counterpart of the
Menger
property should be in between of weakly  $\alpha $-favorable and
Baire.

\section{Miscellanea}

A very important example of a topological group is $C_p(X)$, the
subspace of $\mathbb R^X$ with the Tychonoff product topology
consisting of all continuous functions. We expect that the
remainder
of $C_p(X)$ cannot distinguish between being Menger and
$\sigma$-compact, but we cannot prove this.

\begin{question}\label{q_cp}
Is it true that a remainder of $C_p(X)$ is Menger if and only if
it
is $\sigma$-compact?
\end{question}

Below we present some results giving a partial solution of
Question~\ref{q_cp}.

\begin{proposition} \label{prop_end}
Let $Z$ be a compactification of $C_p(X)$. If $Z\setminus
C_p(X)$ is Menger, then $C_p(X)$ is first countable and
hereditarily
Baire.
\end{proposition}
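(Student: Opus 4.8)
The plan is to exploit two known facts about $C_p(X)$: first, that $C_p(X)$ is never locally compact (unless $X$ is finite, in which case remainders are empty), so any Menger remainder forces $C_p(X)$ to be Lindel\"of and hence of countable type by the Henriksen--Isbell theorem \cite{HenIsb57}; and second, that $C_p(X)$ being of countable type is a very restrictive condition. A $C_p(X)$ which is of countable type contains a nonempty compact subspace with countable outer base, but compact subspaces of $C_p(X)$ are rather special; in fact by a classical result (Arhangel'skii) a compact subset of $C_p(X)$ with countable outer base, combined with the homogeneity of $C_p(X)$ as a topological group, will force $C_p(X)$ itself to be first countable --- equivalently, $X$ is countable. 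So the first step is: from Mengerness of $Z\setminus C_p(X)$, deduce Lindel\"ofness of the remainder (Menger $\Rightarrow$ Lindel\"of), deduce $C_p(X)$ is of countable type, and then deduce $C_p(X)$ is first countable, hence metrizable (being a topological group).

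For the hereditarily Baire conclusion, I would argue as follows. Once $C_p(X)$ is metrizable, $X$ is countable, and then $C_p(X)$ is a separable metrizable topological group which is dense in $\mathbb R^X \cong \mathbb R^\w$. Its remainder in a compactification $Z$ is Menger, hence so is the remainder $\beta C_p(X)\setminus C_p(X)$ (all remainders of a space share these covering properties, as explained in Section~\ref{scheepers} via perfect maps and cvusc maps). Now suppose toward a contradiction that $C_p(X)$ is \emph{not} hereditarily Baire. By Hurewicz's theorem on hereditarily Baire separable metrizable spaces, $C_p(X)$ then contains a closed copy of $\mathbb Q$. A classical theorem characterizes $C_p(X)$ containing a closed copy of $\mathbb Q$ in terms of $X$ having a countable non-trivially-convergent-free structure; more to the point, $C_p(X)\setminus C_p(X)$ arguments aside, one shows that the existence of a closed copy of $\mathbb Q$ in $C_p(X)$ yields, via a standard pushing-out-to-the-remainder argument (a closed nowhere locally compact copy of $\mathbb Q$ in $C_p(X)$ produces, in a metrizable compactification, a closed copy of $\w^\w$ sitting inside the remainder, much as in the proof of Observation~\ref{theorem12}), that the remainder contains a closed copy of $\w^\w$ and is therefore not Menger --- a contradiction.

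The main obstacle I expect is the last step: making precise how a closed copy of $\mathbb Q$ inside $C_p(X)$ (which is dense and nowhere locally compact in its completion) forces a closed copy of $\w^\w$ into the remainder. The clean way is to work inside the completion $\widehat{C_p(X)}$, a Polish group, pick a metrizable compactification $b$ of it, and observe that $C_p(X)$ is nowhere locally compact in $\widehat{C_p(X)}$, so the remainder $b\setminus C_p(X)$ contains $b\setminus \widehat{C_p(X)}$; one then has to locate, using the closed copy of $\mathbb Q$, a closed-in-the-remainder copy of $\w^\w$, by the same Baire-category-plus-\cite[Corollary 21.23]{Kec95} reasoning used in Observation~\ref{theorem12}. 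A secondary obstacle is citing the exact statement that $C_p(X)$ of countable type implies $C_p(X)$ first countable; if a direct reference is unavailable, one can instead argue: the compact subgroup with countable outer base inside $C_p(X)$ must be trivial (compact subgroups of $\mathbb R^X$ are trivial since $\mathbb R^X$ is torsion-free), so $\{0\}$ has countable outer base in $C_p(X)$, i.e., $C_p(X)$ is first countable, which is the quick route.
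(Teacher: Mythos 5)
Your proof follows the same skeleton as the paper's. For first countability the argument is identical: Menger implies Lindel\"of, so $C_p(X)$ is of countable type by Henriksen--Isbell \cite{HenIsb57}, hence contains a compact subgroup with countable outer base, and the only compact subgroup of $C_p(X)$ is $\{0\}$. One caveat here: ``torsion-free, hence no nontrivial compact subgroups'' is not a valid inference in general (there exist compact torsion-free groups, e.g.\ products of $p$-adic integers); the correct reason, used in the paper, is that for $f\neq 0$ the set $\{nf:n\in\w\}$ is unbounded in some coordinate and therefore cannot be contained in any compact subset of $C_p(X)$.

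The genuine gap is in the hereditarily Baire half. The implication you need --- a separable metrizable space that is not hereditarily Baire contains a closed copy of $\mathbb Q$ --- is \emph{not} a theorem about arbitrary separable metrizable spaces; the Hurewicz dichotomy of this kind is proved for Polish (more generally, analytic) spaces and fails outside the definable realm. Since $C_p(X)$ is \v{C}ech-complete only when $X$ is countable and discrete (Lutzer's theorem, quoted later in the paper), you cannot treat it as Polish, and you give no argument placing it in a class for which the dichotomy holds. The paper closes exactly this gap by invoking Debs's theorem \cite{Deb88}, which yields ``no closed copy of $\mathbb Q$ implies hereditarily Baire'' for the spaces at hand; without it (or without observing that $C_p(X)$ is Borel in $\mathbb R^{X}$ for countable $X$ and appealing to the analytic version of the dichotomy) your argument does not go through. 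By contrast, your remaining step --- that a closed copy of $\mathbb Q$ in $C_p(X)$ makes the remainder non-Menger --- is sound, though there is a shorter route than extracting a closed copy of $\w^\w$: the trace of the remainder on $\overline{\mathbb Q}^{Z}$ is a closed, hence Menger, and \v{C}ech-complete, hence $\sigma$-compact, remainder of $\mathbb Q$, which would make $\mathbb Q$ \v{C}ech-complete, a contradiction.
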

\begin{proof}
 Since  Menger spaces are Lindel\"of, by  Henriksen-Isbell's
theorem  \cite{HenIsb57}, $C_p(X)$ is of countable type,
 and therefore it contains a compact subgroup with countable
outer
 base according to \cite[Lemma~4.3.10]{ArkTka11}. It is easy to
see
 that there is no compact subgroup of $C_p(X)$ except for
 $\{0\}$: for any $f\in C_p(X)\setminus\{0\}$, the set
 $\{nf:n\in\w\}$ is not contained in any compact
 $K\subset C_p(X)$ because $\{nf(x):n\in\w\}$ is unbounded
 in $\mathbb R$ if $f(x)\neq 0$. Therefore $C_p(X)$ is
 first-countable, and hence $X$ is countable.

Since $Z\setminus C_p(X)$ is Menger, it follows that $C_p(X)$
contains no closed copy of $\mathbb Q$. Now, a theorem of Debs
\cite{Deb88} implies that $C_p(X)$ is hereditarily Baire.
\end{proof}

The following fact together with Proposition~\ref{prop_end}
 gives
the positive answer to Question~\ref{q_cp} for spaces containing
non-trivial convergent sequences.

\begin{observation}
If $X$ contains a non-trivial convergent sequence then $C_p(X)$
is
not Baire.
\end{observation}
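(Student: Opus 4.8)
The plan is to exhibit $C_p(X)$ as a countable union of closed nowhere dense subsets, hence meager in itself, which contradicts the Baire property.

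First I would pass to a convenient form of the convergent sequence. If $X$ contains a non-trivial convergent sequence with limit $x$, then, since $X$ is completely regular (in particular $T_1$), for every point $y\neq x$ the set $X\sm\{y\}$ is an open neighbourhood of $x$, so only finitely many terms of the sequence can equal $y$; hence one may extract a sequence $(x_n)_{n\in\w}$ of pairwise distinct points of $X\sm\{x\}$ with $x_n\to x$.

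Next, for each $k\in\w$ put
$$A_k=\{f\in C_p(X):|f(x_n)-f(x)|\le 1\text{ for all }n\ge k\}.$$
Each $A_k$ is closed in $C_p(X)$, being the intersection over $n\ge k$ of the preimages of $[-1,1]$ under the continuous map $f\mapsto f(x_n)-f(x)$. Moreover $C_p(X)=\bigcup_{k\in\w}A_k$: given $f\in C_p(X)$, continuity of $f$ at $x$ together with $x_n\to x$ yields some $k$ with $|f(x_n)-f(x)|\le 1$ for all $n\ge k$.

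It remains to show that each $A_k$ is nowhere dense; since $A_k$ is closed, it suffices to show it has empty interior. Fix $k$ and a non-empty basic open set $U\sbst C_p(X)$, determined by a function $f_0\in C_p(X)$, finitely many points $y_1,\dots,y_l\in X$, and an $\varepsilon>0$. Using that the $x_n$ are pairwise distinct, choose $n_0\ge k$ with $x_{n_0}\notin\{y_1,\dots,y_l,x\}$; since $X$ is completely regular and $T_1$, the finite set $\{y_1,\dots,y_l,x\}$ is closed and misses $x_{n_0}$, so there is a continuous $h:X\to[0,1]$ with $h(x_{n_0})=1$ and $h\equiv 0$ on $\{y_1,\dots,y_l,x\}$. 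Then for $M$ large enough the function $g:=f_0+Mh$ satisfies $g(y_i)=f_0(y_i)$ for all $i\le l$, so $g\in U$, while $|g(x_{n_0})-g(x)|=|f_0(x_{n_0})-f_0(x)+M|>1$, so $g\notin A_k$. Hence $U\not\sbst A_k$, and $A_k$ has empty interior. Combining this with the previous paragraph, $C_p(X)$ is a countable union of closed nowhere dense sets, hence is not Baire. The only mildly delicate point is the choice of $n_0$ avoiding the finitely many coordinates of the basic neighbourhood together with the limit point $x$, which is precisely where distinctness of the $x_n$ is used.
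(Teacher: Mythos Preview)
Your proof is correct and follows essentially the same approach as the paper: the sets $A_k$ you define coincide with the paper's $F_k$, and both arguments exhibit $C_p(X)$ as the countable union of these closed nowhere dense sets. The paper merely asserts that each $F_k$ is closed nowhere dense and that their union is $C_p(X)$, whereas you supply the details (the preliminary extraction of a one-to-one sequence in $X\setminus\{x\}$ and the explicit construction of $g=f_0+Mh$ witnessing empty interior), so your write-up is a fleshed-out version of the same idea.
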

\begin{proof}
There is  nice characterization of the Bairness for spaces of
the
form $C_p(X)$ due to Tkachuk. However, we shall present here a
direct elementary proof. Suppose that $(x_n)_{n\in\w}$ is an
injective sequence converging to $x$. Set $$F_n=\{f\in
C_p(X):\forall m\geq n\: (|f(x)-f(x_m)|\leq 1)\}.$$ It is easy to
check that each $F_n$ is closed nowhere dense in $C_p(X)$ and
$C_p(X)=\bigcup_{n\in\w}F_n$.
\end{proof}

 By a theorem of Lutzer (see Problem 265 in \cite{Tka14}),
$C_p(X)$ is \v{C}ech-complete if and only if $X$ is countable and
discrete. So to answer    Question~\ref{q_cp} in the affirmative
we
need  to show that $C_p(X)$ has a Menger remainder  only if $X$
is
countable and discrete.

Note that in the proof of the ``if'' part of
Theorem~\ref{main_sch}
the compactification was a topological group itself, namely
$(\mathcal P(\w),\Delta)$. We do not know whether complements to
Menger subspaces in other Polish  groups (e.g., $\mathbb R$) may
consistently be subgroups. The next proposition imposes some
restrictions.

\begin{proposition}
Let $G$ be an analytic  topological group and $M$ be a non-empty
Menger subspace of $G$. If $G\setminus M$ is a subgroup of $G$,
then
$G$ is $\sigma$-compact and $M$ contains a topological copy of
$\mathcal P(\w)$.
\end{proposition}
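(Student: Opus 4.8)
The plan is to exploit the contrast between the topological tameness of $M$ and the algebraic rigidity forced on the complement $G \setminus M$ by being a subgroup. Since $G$ is analytic, it has a nice descriptive-set-theoretic structure: in particular $M$, as a Menger (hence Lindel\"of) subspace of a second-countable space, is separable metrizable, and $G\setminus M$, being co-Menger, is a "large" subset. The key dichotomy to invoke is that an analytic group is either $\sigma$-compact (equivalently, locally compact, by a theorem on analytic groups) or contains a closed copy of $\w^\w$. If $G$ itself is not $\sigma$-compact, I want to derive a contradiction. First I would argue that $M$ cannot contain a closed copy of $\w^\w$: by Lemma~\ref{lemma9} a dominating subset of $\w^\w$ is never Menger, and any closed copy of $\w^\w$ inside a Menger space would itself be Menger, so $M$ contains no closed-in-$G$ copy of $\w^\w$. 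Hence such a copy, if it exists in $G$, must meet $G\setminus M$ in a comeager (in that copy) set.

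The heart of the argument is then a Bernstein-style obstruction analogous to Lemmata~\ref{lemma8}--\ref{lemma10}. Suppose $G$ is not $\sigma$-compact; then by the Hurewicz-type dichotomy for analytic spaces (\cite[Corollary~21.23]{Kec95}, \cite[Theorem~8.17]{Kec95}) $G$ contains a closed copy $Z\cong\w^\w$. The subgroup $G\setminus M$ is, being the complement of a Menger (in particular meager, since a $\sigma$-bounded-but-not-compact-free analytic set is meager) set, comeager in $G$, hence $Z\cap(G\setminus M)$ is comeager in $Z$. But $Z\cap M$ is Menger, hence does not contain any closed-in-$Z$ copy of $\w^\w$, which by \cite[Corollary~21.23]{Kec95} forces $Z\cap M$ to be contained in a $\sigma$-compact subset of $Z$, hence meager in $Z$ --- consistent so far. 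The contradiction I would extract instead comes from the subgroup structure: a dense subgroup $H = G\setminus M$ of a non-discrete Polish-like group cannot omit a set as small as a $\sigma$-compact-supported complement without $H$ being all of $G$ up to a meager set, and then $H = G\setminus M$ being a subgroup forces $M$ to be a union of cosets, hence closed, contradicting $M$ being a proper nonempty Menger (hence meager, since non-$\sigma$-compact $G$ has no Menger open set) subset unless $M$ itself is $\sigma$-compact-bounded --- pushing the whole analysis back to show $G$ is $\sigma$-compact. Once $G$ is $\sigma$-compact and not discrete, it contains a copy of the Cantor set; and since $G\setminus M$ is a meager subgroup, $M$ is comeager, so by a Mycielski-type argument $M$ contains a Cantor set, i.e.\ a copy of $\mathcal P(\w)$.

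So the concrete steps are: (1) show $M$ Menger in analytic $G$ implies $M$ is meager and contained in a $\sigma$-compact-free subset, using Lemma~\ref{lemma9} and the Hurewicz dichotomy; (2) deduce $G\setminus M$ is comeager, and combine "comeager subgroup" with the subgroup axioms to conclude either $G\setminus M = G$ (impossible, $M\neq\emptyset$) or $G$ is $\sigma$-compact --- the meager subgroup of a non-$\sigma$-compact analytic group would otherwise be forced to be non-meager; (3) with $G$ $\sigma$-compact, note $G$ is uncountable (else $M$ finite, but then $G\setminus M$ still a subgroup forces $M=\emptyset$), hence has a perfect compact subset, and the comeager $M$ meets it in a Cantor set via the Baire category theorem for the Cantor set, giving the copy of $\mathcal P(\w)$. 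The main obstacle I anticipate is step (2): making precise why a meager subgroup $H\le G$ of an analytic non-$\sigma$-compact group with $\sigma$-compact-free (hence small) complement must actually be non-meager --- this is where one uses that $G\setminus H = M$ being Menger is a genuinely stronger constraint than merely meager, likely via Pettis' theorem ($H$ comeager $\Rightarrow$ $H$ clopen, hence $H=G$) together with a careful accounting of which analytic sets can be complements of subgroups.
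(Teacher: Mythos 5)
Your proposal has a genuine gap at its core. Every step funnels through the claim that $M$, being Menger, is meager (so that $H=G\setminus M$ is a comeager subgroup, to which you then want to apply Pettis-type arguments and, at the end, a Mycielski-type argument). Menger does \emph{not} imply meager: $M=G\setminus H$ is not assumed analytic, so neither the Hurewicz dichotomy (\cite[21.23, 8.17]{Kec95}) nor the ``analytic Menger $\Rightarrow$ $\sigma$-compact $\Rightarrow$ meager'' chain applies to it, and the dichotomy fails badly for arbitrary sets (a Bernstein set in $\w^\w$ is dominating yet contains no closed copy of $\w^\w$). Worse, the paper's own central example refutes your premise: a Menger ultrafilter $\F$ is a non-meager, non-comeager Menger subspace of $(\mathcal P(\w),\Delta)$ whose complement $\sim\F$ is a subgroup — exactly the situation of this proposition — so your step (2) (``$G\setminus M$ comeager, hence equals $G$'') and step (3) (``$M$ comeager, hence contains a Cantor set'') both break down. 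You also apply the dichotomy to the non-analytic trace $Z\cap M$, and the parenthetical ``$\sigma$-compact $\Leftrightarrow$ locally compact for analytic groups'' is false ($\IQ$), though that last slip is inessential.

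The missing idea for the first assertion is a one-line algebraic observation, not category: fix $g\in M$; since $H$ is a subgroup and $g\notin H$, we have $gH\cap H=\emptyset$, i.e.\ $H\subseteq g^{-1}*M$, so $G=M\cup g^{-1}*M$ is a union of two Menger spaces and hence Menger; now $G$ \emph{is} analytic, so Arhangel'skii's theorem \cite{Arh86} gives $\sigma$-compactness of $G$ itself. For the second assertion one does use a Bernstein argument, but built from the same translation trick rather than from comeagerness: if $M$ contained no copy of $\mathcal P(\w)$, then no copy $K$ of $\mathcal P(\w)$ could lie in $H$ either (else $g*K\subseteq M$ would be such a copy), so $M$ would meet every copy of $\mathcal P(\w)$ while containing none; tracing this on $X\setminus Q$ for a suitable copy $X$ of $\mathcal P(\w)$ and countable dense $Q\subseteq X\setminus M$ produces a Bernstein subset of a copy of $\w^\w$ that is closed in $M$, hence Menger, contradicting Lemma~\ref{lemma9}.
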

\begin{proof}
Suppose that $H=G\setminus M$ is a subgroup of $G$ and fix $g\in
M$.
Then $H\subset g^{-1}*M $, where $*$ is the underlying operation
on
$G$. Therefore $G=M\cup g^{-1}*M$ is Menger, and hence it is
$\sigma$-compact, see \cite{Arh86}.

Now suppose that $M$ contains no topological copy of $\mathcal
P(\w)$ and let $X\subset G$ be homeomorphic to $\mathcal P(\w)$.
If
$X\subset H$ then $g*X\subset g^*H\subset M$ which is impossible
by
our assumption above. Thus $X\cap M\neq\emptyset$. Since $M$
contains no copy of $\mathcal P(\w)$, $X\setminus M$ is dense in
$X$,  and hence there exists a countable dense subset $Q$ of $X$
disjoint from $M$. Then $X\cap M=(X\setminus Q)\cap M$ is a
closed
subset of $M$. Note that $(X\setminus Q)$ is a copy of $\w^\w$
and
$M\cap (X\setminus Q)$ is a Bernstein set in $(X\setminus Q)$. To
finish, it suffices to apply Lemma~\ref{lemma9}.
\end{proof}

The following statement shows that the classical Cantor-Bendixon
inductive procedure does not have any variant allowing to
separate a
``nowhere perfect'' core of a Menger space from its
``$\sigma$-compact part''.

\begin{proposition} \label{proposition19}
 There exists a Baire dense nowhere locally
compact subgroup $\I $ of $\mathcal P(\w)$ with the Menger
property
such that for every $\sigma$-compact subspace $\mathcal  S$ of
$\I $
there exists $\K \subset \I $ homeomorphic to $\mathcal P(\w)$
such
that $\K \cap \mathcal S=\emptyset$.
\end{proposition}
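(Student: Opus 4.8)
The plan is to realise $\I$ as a product $\sim\G\times\mathcal P(B)$. Let $\F$ be the ZFC example mentioned in the introduction: a free filter on $\w$ which is Baire and all of whose finite powers are Menger (in particular Menger), see \cite[Claim~5.5]{ChoRepZdo??} and the proof of \cite[Theorem~1]{RepZdoZha14}. Fix a partition $\w=A\sqcup B$ into two infinite sets together with a bijection $e\colon\w\to A$, let $\G=\{e[Z]:Z\in\F\}$ be the induced free Baire filter on $A$ (still with all finite powers Menger), and write $\sim\G=\{A\sm Z:Z\in\G\}$ for its dual ideal. Identifying $\mathcal P(\w)$ with $\mathcal P(A)\times\mathcal P(B)$ via $x\mapsto(x\cap A,x\cap B)$, I set
$$\I=\sim\G\times\mathcal P(B)=\{x\sbst\w:x\cap A\in\sim\G\}.$$

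First I would check that $\I$ is a dense Baire nowhere locally compact Menger subgroup of $\mathcal P(\w)$. Since $\sim\G$ is an ideal on $A$ it is a subgroup of $(\mathcal P(A),\Delta)$, so $\I$ is a subgroup of $(\mathcal P(\w),\Delta)$. As $\G$ is free it contains every cofinite subset of $A$, and these are dense in $\mathcal P(A)$; since $\sim\G$ is the image of $\G$ under the homeomorphism $y\mapsto A\sm y$ of $\mathcal P(A)$, both $\G$ and $\sim\G$ are dense in $\mathcal P(A)$, whence $\I$ is dense in $\mathcal P(\w)$. The same homeomorphism shows that $\sim\G$, being homeomorphic to $\F$, is Baire and has all finite powers Menger. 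Moreover $\sim\G$ is a proper (it omits $A$, because $\emptyset\notin\G$) dense subgroup of the compact group $(\mathcal P(A),\Delta)$, hence not locally compact — a locally compact subgroup of a Hausdorff topological group is closed, and a closed dense subgroup is the whole group — and therefore nowhere locally compact. Finally $\mathcal P(B)$ is compact metrizable, and I would invoke the standard permanence facts that the product of a Baire space with a compact metrizable space is Baire, that the product of a Menger space with a compact space is Menger (which follows easily from the tube lemma and the Menger property of the first factor), and that the product of a nowhere locally compact space with a compact space is again nowhere locally compact; applied to $\sim\G$ and $\mathcal P(B)$ these yield all the required properties of $\I$.

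The main step is the Cantor-set property. Let $\mathcal S\sbst\I$ be $\sigma$-compact and let $\pi\colon\mathcal P(\w)\to\mathcal P(A)$ be the projection $x\mapsto x\cap A$. Then $\pi(\mathcal S)$ is a $\sigma$-compact subset of $\sim\G$. Since $\sim\G$ is nowhere locally compact, each of its compact subsets has empty interior (a compact set with nonempty interior would be a neighbourhood with compact closure), so $\pi(\mathcal S)$ is meager in $\sim\G$; as $\sim\G$ is Baire I can choose $z\in\sim\G\sm\pi(\mathcal S)$. Put
$$\K=\pi^{-1}(z)=\{x\sbst\w:x\cap A=z\}=\{z\}\times\mathcal P(B).$$
Then $\K\sbst\I$ because $z\in\sim\G$; $\K$ is closed in $\mathcal P(\w)$ and homeomorphic to $\mathcal P(B)$, hence to $\mathcal P(\w)$ since $B$ is infinite; and $\K\cap\mathcal S=\emptyset$, because $\pi$ sends every element of $\mathcal S$ into $\pi(\mathcal S)$, which does not contain $z$. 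Thus $\K$ is the required copy of $\mathcal P(\w)$ inside $\I$ that misses $\mathcal S$.

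The one non-routine point is the passage to the product. Taking $\I=\sim\F$ directly looks problematic, since a single compact subset of $\sim\F$ can already swallow a whole Cantor set $\mathcal P(C)$ (for any $C\in\sim\F$), so there is no evident reason for an arbitrary $\sigma$-compact $\mathcal S\sbst\sim\F$ to leave ``room'' for a copy of $\mathcal P(\w)$. Adjoining the harmless compact factor $\mathcal P(B)$ repairs this: it supplies the perfect family $\{\{z\}\times\mathcal P(B):z\in\sim\G\}$ of pairwise disjoint Cantor subsets of $\I$, and a $\sigma$-compact subset of $\I$ can meet only a meager — hence not all — set of indices $z$. Everything else is a routine check of permanence properties of Cartesian products.
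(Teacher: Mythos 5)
Your proof is correct, but it takes a genuinely different route from the paper's. The paper simply takes $\I$ to be the dual ideal of a non-meager Menger filter and exploits the group structure directly: for any fixed copy $\mathcal X$ of $\mathcal P(\w)$ inside $\I$ (e.g.\ $\mathcal P(I)$ for an infinite $I\in\I$) and any $\sigma$-compact $\mathcal S\subset\I$, the set $\mathcal S+\mathcal X$ is $\sigma$-compact while $\I$ is not (being Baire and nowhere locally compact), so one picks $I\in\I\setminus(\mathcal S+\mathcal X)$ and takes $\K=\{I\}-\mathcal X$, a translate of $\mathcal X$ that stays in the subgroup $\I$ and misses $\mathcal S$. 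This translation trick is exactly what answers the worry in your closing paragraph: the ``un-padded'' ideal $\sim\F$ does work, because although a single compact set can swallow one Cantor set, it cannot swallow all of its translates. Your argument instead manufactures the needed perfect disjoint family of Cantor sets by hand, via the decomposition $\I=\sim\G\times\mathcal P(B)$, and replaces the translation step by the observation that a $\sigma$-compact subset projects to a meager subset of the Baire, nowhere locally compact factor $\sim\G$. The price is a longer list of permanence checks (Baire, Menger, and nowhere local compactness of a product with a compact metrizable factor -- all of which you handle correctly, the Baire case via Oxtoby's theorem on products of second-countable Baire spaces and the Menger case via perfectness of the projection); the gain is that your argument uses only the ideal (monoid) structure of $\I$ and the fibration over $\sim\G$, not translation by arbitrary group elements, and it exhibits the disjoint Cantor sets explicitly as fibers. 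Both arguments start from the same ZFC input, a non-meager (equivalently Baire) Menger filter from \cite{RepZdoZha14}.
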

\begin{proof}
It is well-known that there exists a non-meager Menger filter
$\F$
on $\w$, see, e.g., the proof of Theorem~1 in \cite{RepZdoZha14}.
Let $\mathcal I $ be the dual ideal of $\F$. Then $\mathcal I $
is
Menger, nowhere locally compact, and non-meager being
homeomorphic
to $\F$. Also, $\mathcal I $ is a subgroup of $\mathcal P(\w)$,
and
hence it is Baire because each non-meager topological group is
so.
Note that $\mathcal I $ contains copies of $\mathcal P(\w)$:  for
every infinite $I\in\mathcal I $ the set $\mathcal
P(I)\subset\mathcal I $ is such a copy. Let us fix $\mathcal
X\subset\mathcal I $ homeomorphic to $\mathcal P(\w)$ and a
$\sigma$-compact $\mathcal S\subset\mathcal I $. Then there
exists
$I\in \mathcal I \setminus (\mathcal S+\mathcal  X)$ because
$\mathcal S+\mathcal X$ is $\sigma$-compact and $\mathcal I $ is
not. It follows that $\mathcal K:=\{I\}-\mathcal  X$ is a copy of
$\mathcal P(\w)$ disjoint from $\mathcal S$.
\end{proof}

\noindent\textbf{Acknowledgement.}  The authors wish to thank
Masami
Sakai and Boaz Tsaban for   many useful comments.
We also thank the anonymous referee for a careful reading and many
suggestions which improved the paper.

\end{document}